\newtheorem{theorem}{Theorem}[section]
\newtheorem{proposition}[theorem]{Proposition}
\newtheorem{lemma}[theorem]{Lemma}
\newtheorem{remark}[theorem]{Remark}
\newtheorem{example}[theorem]{Example}
\newtheorem*{acknowledgement}{Acknowledgement}
\theoremstyle{definition}
\newtheorem{definition}[theorem]{Definition}
\newtheorem{notation}[theorem]{Notation}
\numberwithin{equation}{section}
\begin{document}

\title[Log Sobolev on Heisenberg groups]{Logarithmic Sobolev inequalities on non-isotropic Heisenberg groups}

\author[Maria Gordina]{Maria Gordina{$^{\dag}$}}
\thanks{\footnotemark {$\dag$} Research was supported in part by NSF Grants DMS-1712427 and DMS-1954264.}
\address{$^{\dag}$ Department of Mathematics\\
University of Connecticut\\
Storrs, CT 06269,  U.S.A.}
\email{maria.gordina@uconn.edu}

\author[Liangbing Luo]{Liangbing Luo{$^{\dag}$}}
\address{$^{\dag}$ Department of Mathematics\\
University of Connecticut\\
Storrs, CT 06269,  U.S.A.}
\email{liangbing.luo@uconn.edu}

\keywords{logarithmic Sobolev inequality, Poincar\'e inequality, hypoelliptic heat kernel, Heisenberg group}

\subjclass{Primary 58J35; Secondary 22E30, 22E66, 35A23, 35K08, 35R03, 60J65}


\begin{abstract}
We study logarithmic Sobolev inequalities with respect to a heat kernel measure on finite-dimensional and infinite-dimensional Heisenberg groups. Such a group is the simplest non-trivial example of a sub-Riemannian manifold.  First we consider logarithmic Sobolev inequalities on non-isotropic Heisenberg groups. These inequalities are considered with respect to the hypoelliptic heat kernel measure, and we show that the logarithmic Sobolev constants can be chosen to be independent of the dimension of the underlying space. In this setting, a natural Laplacian is not an elliptic but a hypoelliptic operator. The argument relies on comparing logarithmic Sobolev constants for the three-dimensional non-isotropic and isotropic Heisenberg groups, and tensorization of logarithmic Sobolev inequalities in the sub-Riemannian setting. Furthermore, we apply these results in an  infinite-dimensional setting and prove a logarithmic Sobolev inequality on an infinite-dimensional Heisenberg group modelled on an abstract Wiener space.
\end{abstract}

\maketitle

\tableofcontents

\section{Introduction}

The logarithmic Sobolev inequality  has been first introduced and studied by L. Gross in \cite{Gross1975c} on a Euclidean space with the Gaussian measure, and since then it found many applications. In particular, a number of existing results concern the question on how the constant in the logarithmic Sobolev inequality depends on the geometry of the underlying space, mostly in the Riemannian setting, see for example \cite[Section 5.7, Proposition 5.7.1]{BakryGentilLedouxBook}. The logarithmic Sobolev constant in that case  depends on the Ricci lower bound while it is independent of the dimension. The logarithmic Sobolev inequality is closely related to many important properties of the corresponding Markov semigroup such as hypercontractivity. Moreover, the fact that the logarithmic Sobolev constant often does not depend on the dimension makes it applicable in  infinite-dimensional settings.

Such results in the Riemannian setting rely on ellipticity of the Laplace-Beltrami operator as well as on geometric methods such as a curvature-dimension inequality, or different versions of  $\Gamma$ calculus. In the current paper we consider non-isotropic Heisenberg groups  which are the simplest non-trivial examples of sub-Riemannian manifolds.  The corresponding Laplacians are not elliptic operators but hypoelliptic which makes analysis more challenging. In addition, the Riemannian curvature-dimension condition  is not available. While recently such geometric methods have been developed for some sub-Riemannian manifolds starting with \cite{BaudoinGarofalo2017}, they are not easily applicable to non-isotropic Heisenberg groups of dimensions greater than $5$.

We consider a family of non-isotropic Heisenberg groups of a symplectic space $\left( \mathbb{R}^{2n}, \omega \right)$ defined as follows.

\begin{definition}\label{d.1.1}
A \emph{non-isotropic Heisenberg group} $\mathbb{H}^{n}_{\omega}$ is the set $\mathbb{R}^{2n}\times\mathbb{R}$ equipped with the group law given by

\begin{align}\label{GroupLaw}
& \left( \mathbf{v}, z \right)\star \left( \mathbf{v}^{\prime}, z^{\prime} \right)=\left(\mathbf{v}+\mathbf{v}^{\prime},z+z^{\prime}+\frac{1}{2}\omega\left( \mathbf{v}, \mathbf{v}^{\prime} \right)\right),
\\
& \mathbf{v}=\left( x_{1},y_{1},\cdots,x_{n},y_{n}  \right), \mathbf{v}^{\prime}=\left( x_{1}^{\prime},y_{1}^{\prime},\cdots,x_{n}^{\prime}, y_{n}^{\prime} \right) \in \mathbb{R}^{2n},
\notag
\\
& \omega: \mathbb{R}^{2n} \times \mathbb{R}^{2n} \longrightarrow \mathbb{R},
\notag
\end{align}
where
\begin{align}\label{e.SymplForm}
& \omega\left( \mathbf{v}, \mathbf{v}^{\prime} \right):=\sum_{i=1}^{n} \alpha_i\left(x_{i}y_{i}^{\prime}-x_{i}^{\prime}y_{i}\right)=\sum_{i=1}^{n} \omega_{i}\left(\mathbf{v}_{i}, \mathbf{v}_{i}^{\prime}\right),
\\
& \omega_{i}\left(\mathbf{v}_{i}, \mathbf{v}_{i}^{\prime}\right)=\alpha_i\left(x_{i}y_{i}^{\prime}-x_{i}^{\prime}y_{i}\right)
\notag
\\
& \mathbf{v}_{i}=\left( x_{i}, y_{i}\right), \mathbf{v}_{i}^{\prime}=\left( x_{i}^{\prime}, y_{i}^{\prime}\right)
\notag
\end{align}
is a  \emph{symplectic form} on $\mathbb{R}^{2n}$ and $\alpha_{1}, \alpha_{2}, \cdots, \alpha_{n}$ are positive constants indexed in such a way that
\begin{align*}
0<\alpha_{1}\leqslant \alpha_{2}\leqslant\cdots\leqslant \alpha_p=\alpha_{p+1}=\cdots=\alpha_{n}.
\end{align*}
\end{definition}
Note that any non-degenerate symplectic form on $\mathbb{R}^{2n}$, that is, a bilinear anti-symmetric form, can be written as a sum of symplectic forms on $\mathbb{R}^{2}$, as we describe in  Appendix~\ref{s.SymplSpace}. In particular, this explains why such groups are referred to as non-isotropic.

If $\alpha_{1}=\cdots=\alpha_{n}=1$, we get the standard $2n+1$-dimensional Heisenberg group. Sometimes the parametrization $\alpha_{1}=\cdots=\alpha_{n}=4$ is used for the standard Heisenberg group as in  \cite{BealsGaveauGreiner2000, LiHong-Quan2006, LiHong-QuanZhang2019} et al. These are all isotropic Heisenberg groups referring to the fact that the corresponding symplectic space is isotropic as described in Appendix~\ref{s.SymplSpace}.

We equip the group $\mathbb{H}^{n}_{\omega}$ with a sub-Riemannian manifold structure and the corresponding distance depending on the symplectic form $\omega$. The logarithmic Sobolev inequality we study is with respect to the heat kernel measure for the sub-Laplacian associated with the sub-Riemannian structure. One of the questions is how the logarithmic Sobolev constant depends on the symplectic form $\omega$ and the dimension of the Heisenberg group
$\mathbb{H}^{n}_{\omega}$.

Before describing our main result, let us review relevant mathematical literature. The logarithmic Sobolev inequality is known to hold in the isotropic case. For $n=1$ this inequality has been established by H.-Q.~Li  in  \cite{LiHong-Quan2006} with $\alpha_{1}=4$. His proof is based on pointwise upper and lower heat kernel estimates, and a gradient estimate known as the Driver-Melcher inequality. Motivated by \cite{Gross1992}  M. Bonnefont, D. Chafa\"i and R. Herry in \cite{BonnefontChafaiHerry2020} used a random walk approximation to study the case $n=1$. For  $n\geqslant 1$, W. Hebisch and B. Zegarlinski proved a logarithmic Sobolev inequality in \cite{HebischZegarlinski2010} using the tensorization property of logarithmic Sobolev inequalities and a lifting to the product group first introduced by \cite[Section 3]{Fraser2001a}. N.~Eldredge in \cite{Eldredge2010} proved the inequality on $H$-type groups using the hypoelliptic heat kernel estimates, such estimates on isotropic Heisenberg groups have been also shown in \cite{LiHong-Quan2007, HuLi2010}. Another approach to use H.-Q.~Li's heat kernel estimates to derive $L^{1}$ gradient bounds and a logarithmic Sobolev inequality has been used in \cite{BakryBaudoinBonnefontChafai2008}.

The measure considered in \cite{BakryBaudoinBonnefontChafai2008, BonnefontChafaiHerry2020, HebischZegarlinski2010, Eldredge2010} is the hypoelliptic heat kernel measure on $\mathbb{H}^n_{\omega}$ which can be regarded as an analogue of the Gaussian measure on the Euclidean space. In a different direction, \cite{BaudoinBonnefont2012} obtained a dimension-dependent upper bound for the logarithmic Sobolev constant with respect to the invariant measure of a subelliptic generator using a generalized curvature-dimension condition as developed in \cite{BaudoinGarofalo2017}.

F. Baudoin and Q. Feng in \cite{BaudoinFeng2015} used Malliavin's calculus to prove a version of logarithmic Sobolev inequalities on the horizontal path space with a constant depending on the geometry of the underlying space. In \cite{FrankLieb2012} R. Frank and L. Lieb proved a logarithmic Sobolev inequality on a Heisenberg group, with the measure being a Haar measure. They also show that  the logarithmic Sobolev constant  is sharp. In this case the logarithmic Sobolev constant is  dimension-dependent constant which is natural since they  use a Haar measure instead of the heat kernel measure that we consider in the current paper.

All of the results we mentioned previously apply only to the isotropic case. In the non-isotropic setting, one special case of non-isotropic Heisenberg groups was considered by E. Bou Dagher and B. Zegarlinski recently in a preprint  \cite{BouDagherZegarlinski2021}, in which they derived a dimension-dependent logarithmic inequality on such groups, but not for a heat kernel measure.

Moreover, the dependence of the logarithmic Sobolev constant on geometric characteristics of $\mathbb{H}^{n}_{\omega}$ has not been studied in  either isotropic or non-isotropic cases. Our main motivation for such a study is an application to infinite-dimensional Heisenberg-type groups introduced in \cite{DriverGordina2008} and studied in the sub-Riemannian setting in \cite{BaudoinGordinaMelcher2013, DriverEldredgeMelcher2016}, where non-isotropy is a consequence of the infinite-dimensional setting. This application is in spirit of the original use of a logarithmic Sobolev inequality but in a hypoelliptic infinite-dimensional setting.

Our paper is organized as follows. We first consider the case $n=1$ in Section~\ref{sec.LSINonisotropic.n=1}. Next, we study the tensorization argument of logarithmic Sobolev inequalities in the sub-Riemannian setting. Then we deduce the logarithmic Sobolev inequality on the non-isotropic Heisenberg group by regarding a non-isotropic Heisenberg group as a quotient group obtained from the product group. This allows us to use a dimension-independent constant in the logarithmic Sobolev inequality introduced  in Section~\ref{sec.LSINonisotropic}. Moreover, we show that the logarithmic Sobolev constant can be chosen to not depend on $\omega$ and the dimension. In Section~\ref{sec.OrderReversed}, we discuss a second approach when tensorization and lifting are reversed.

Finally, we apply the results on non-isotropic Heisenberg groups to the infinite-dimensional Heisenberg group with a  one-dimensional center in Section~\ref{sec.LSIInfinite}. While the classical finite-dimensional definition of hypoellipticity can not be directly used in this setting, it is known that the heat kernel measure is smooth by  \cite{BaudoinGordinaMelcher2013, DriverEldredgeMelcher2016}. Our results on the logarithmic Sobolev inequalities in the simplest infinite-dimensional hypoelliptic setting represent the next natural step in studying the logarithmic Sobolev inequalities  for infinite-dimensional hypoelliptic diffusions.

\section{Preliminaries} \label{sec.Preliminaries}

\subsection{Non-isotropic Heisenberg groups as sub-Riemannian manifolds}

A non-isotropic Heisenberg group $\mathbb{H}^{n}_{\omega}$ introduced in Definition~\ref{d.1.1} is a Lie group, with the identity being $e=\left( \mathbf{0}, 0 \right)$, and the inverse  given by $\left( \mathbf{v}, z \right)^{-1}= \left( -\mathbf{v}, -z \right)$. Its Lie algebra $\mathfrak{h}_{\omega}:=\mathcal{L}\left( \mathbb{H}^{n}_{\omega} \right) \cong T_e\mathbb{H}^{n}_{\omega}$ can be identified with the space  $\mathbb{R}^{2n+1}\cong \mathbb{R}^{2n} \times \mathbb{R}$  with the Lie bracket given by
\begin{equation}\label{e.LieBracket}
\left[
\left( \mathbf{a}_{1}, c_{1} \right), \left( \mathbf{a}_{2}, c_{2} \right)  \right] = \left(0,
\omega\left( \mathbf{a}_{1}, \mathbf{a}_{2} \right)  \right).
\end{equation}
The group $\mathbb{H}^{n}_{\omega}$ is a connected nilpotent group, and by \cite[Theorem 1.2.1]{CorwinGreenleafBook}  both the exponential and logarithmic maps are global diffeomorphisms. Thus the  exponential map $\exp: \mathfrak{h}_{\omega} \longrightarrow \mathbb{H}^{n}_{\omega}$, and its inverse map $\log: \mathbb{H}^n_{\omega} \longrightarrow \mathfrak{h}_{\omega}$ are well-defined on the whole Lie algebra  $\mathfrak{h}_{\omega}$ of $\mathbb{H}^{n}_{\omega}$. Moreover, we can describe them explicitly by
\[
\exp \left(\mathbf{a}, c\right)=\left(\mathbf{a}, c\right)
\]
for any $\left(\mathbf{a}, c\right)\in\mathfrak{h}_{\omega}$ and
\[
\log \left(\mathbf{v}, z\right)=\left(\mathbf{v}, z\right)
\]
for any $g=\left(\mathbf{v}, z\right)\in\mathbb{H}^{n}_{\omega}$. As a Carnot group $\mathbb{H}^{n}_{\omega}$ has a one-parameter group of automorphisms called   \emph{dilations}

\begin{align*}
& \delta_{\lambda}:\mathbb{H}^{n}_{\omega}\rightarrow \mathbb{H}^{n}_{\omega}, \lambda>0,
\\
& \delta_{\lambda}\left(\mathbf{v},z\right):=\left(\lambda\mathbf{v}, \lambda^2 z\right), \, g=\left(\mathbf{v},z\right)\in\mathbb{H}^{n}_{\omega}.
\end{align*}
We refer to \cite[Section 1.3]{BonfiglioliLanconelliUguzzoniBook} for more details.

Consider the following left-invariant vector fields on $\mathbb{H}^{n}_{\omega}$ identified with differential operators on $\mathbb{R}^{2n+1}$ by
\begin{align}\label{e.CanonicalBasis}
& X_i^{\omega}\left( g \right)=\frac{\partial}{\partial x_i}-\frac{\alpha_i}{2}y_i\frac{\partial}{\partial z}, \notag
\\
&
Y_i^{\omega}\left( g \right)=\frac{\partial}{\partial y_i}+\frac{\alpha_i}{2}x_i\frac{\partial}{\partial z}, \hskip0.1in i=1,\cdots,n
\\
&
Z^{\omega}\left( g \right)=\frac{\partial}{\partial z} \notag
\end{align}
for any $g=(x_{1},y_{1},\cdots,x_{n},y_{n},z)\in\mathbb{H}^{n}_{\omega}$. Note that the only non-zero Lie brackets for  left-invariant vector fields $X_i^{\omega}$ and $Y_i^{\omega}$ are
\[
[X_i^{\omega},Y_i^{\omega}]=\alpha_i Z^{\omega},  i=1,...,n,
\]
so the vector fields $\{X_i^{\omega}, Y_i^{\omega}, i=1, \cdots, n \}$ and their Lie brackets span the tangent space at every point, and therefore H\"{o}rmander's condition is satisfied.

This implies  that the group $\mathbb{H}^{n}_{\omega}$ has a natural sub-Riemannian structure $\left(\mathbb{H}^{n}_{\omega}, \mathcal{H}^{\omega}, \langle \cdot, \cdot \rangle^{\omega}_{\mathcal{H}}\right)$, where
\[
\mathcal{H}^{\omega}=\mathcal{H}_{g}^{\omega}= \operatorname{Span}\{X_i^{\omega} \left( g \right), Y_i^{\omega}\left( g \right), i=1,\cdots,n\}
\]
is the \emph{horizontal distribution} and the left-invariant inner product $\langle\cdot,\cdot\rangle_{\mathcal{H}^{\omega}}$ is chosen in such a way that  $\{X_i^{\omega},Y_i^{\omega}:i=1,\cdots,n\}$ is an orthonormal frame for the sub-bundle $\mathcal{H}^{\omega}$. Note that both the vector space $\mathcal{H}^{\omega}_g$ and the left-invariant sub-Riemannian metric $\langle \cdot, \cdot \rangle^{\omega}_{\mathcal{H}}=\langle \cdot, \cdot \rangle^{\omega}_{\mathcal{H}^{\omega}}$ depend on the symplectic form $\omega$.

We can equivalently describe the distribution $\mathcal{H}^{\omega}$ using a subspace of the Lie algebra $\mathfrak{h}_{\omega}$. Namely, if a \emph{horizontal space} $\mathcal{H}\subset \mathfrak{h}_{\omega}\cong T_{e}\mathbb{H}^{n}_{\omega}$ is equipped with the Euclidean inner product on $\mathbb{R}^{2n}$ with the corresponding norm denoted by $\vert \cdot \vert_{\mathcal{H}}$,  then we can use the left translation to define the sub-bundle $\mathcal{H}^{\omega}$ with the induced left-invariant sub-Riemannian metric $\langle \cdot, \cdot \rangle^{\omega}_{\mathcal{H}}$ and the corresponding left-invariant norm denoted by $\vert \cdot \vert_{\mathcal{H}^{\omega}}$ on $\mathcal{H}^{\omega}_g$ for any $g\in\mathbb{H}^{n}_{\omega}$. We will sometimes identify the horizontal distribution $\mathcal{H}^{\omega}$ and the horizontal space $\mathcal{H}$.

Recall that the Maurer-Cartan form $\theta$ on a Lie group $G$ is a Lie algebra-valued $1$-form defined by $\theta\left( v \right):=\theta_{g}\left( v \right)=L_{g^{-1} \ast}v$, $g\in G$, $v \in T_{g}G$.

\begin{definition} A path $\gamma: [a, b] \longrightarrow \mathbb{H}^{n}_{\omega}$ is said to be \emph{horizontal} if $\gamma$ is absolutely continuous and $\theta_{\gamma(t)}\left( \gamma^{\prime}(t) \right) \in \mathcal{H}$ for a.e. $t$. The \emph{length} of a horizontal path $\gamma: [a, b] \longrightarrow \mathbb{H}^{n}_{\omega}$ is defined to be
\[
l_{\mathcal{H}^{\omega}}\left( \gamma \right)=\int_{a}^{b} \vert \theta_{\gamma(t)}\left( \gamma^{\prime}(t) \right) \vert_{\mathcal{H}} dt.
\]
If $\gamma$ is not horizontal we define $l_{\mathcal{H}^{\omega}}\left( \gamma \right)=\infty$.

The \emph{Carnot-Carath\'eodory distance} between $g_{1}, g_{2} \in \mathbb{H}^{n}_{\omega}$ is defined as

\begin{align}\label{df.CCdistance}
d_{CC}^{\omega}(g_{1}, g_{2}):=\inf \left\{ l_{\mathcal{H}^{\omega}}\left( \gamma \right), \gamma\left( a \right)=g_{1}, \gamma\left( b \right)=g_{2} \right\}.
\end{align}
\end{definition}
The Chow-Rashevsky theorem (e.g. \cite[Section 19]{BonfiglioliLanconelliUguzzoniBook}) asserts  that H\"{o}rmander's condition implies that any two points in $\mathbb{H}^{n}_{\omega}$ can be joined by a horizontal path, therefore  $d_{CC}^{\omega}(g_{1}, g_{2})$ is finite for any $g_{1}, g_{2} \in \mathbb{H}^{n}_{\omega}$.

It is known that the infimum in \eqref{df.CCdistance} is attained, e.g. \cite[Theorem 5.15.5]{BonfiglioliLanconelliUguzzoniBook}. In addition, the Carnot-Carath\'eodory distance is a left-invariant metric on $\mathbb{H}^{n}_{\omega}$, that is, for any $g_{1},g_{2},g\in\mathbb{H}^{n}_{\omega}$
\begin{align*}
& d_{CC}^{\omega}(g_{1}, g_{2})=d_{CC}^{\omega}((g_{2})^{-1}g_{1}, e),
\\
& d_{CC}^{\omega}(g^{-1},e)=d_{CC}^{\omega}(g,e),
\end{align*}
e.g. \cite[Proposition 5.2.3, Proposition 5.2.4]{BonfiglioliLanconelliUguzzoniBook}.

\begin{notation} For any $g=\left(x_{1}, y_{1},\cdots,x_{n}, y_{n},z\right)\in\mathbb{H}^{n}_{\omega}$ we denote by
\[
d_{CC}^{\omega}(g):=d_{CC}^{\omega}(e,g)
\]
the corresponding norm.
\end{notation}
In addition to being left-invariant  $d_{CC}^{\omega}(g)$ is a homogeneous norm (e.g. \cite[Theorem 5.2.8]{BonfiglioliLanconelliUguzzoniBook}) and therefore
\[
d_{CC}^{\omega}\left(\delta_{\lambda} \left( g \right)\right)=\lambda d_{CC}^{\omega}\left( g \right), \lambda >0, g \in \mathbb{H}^{n}_{\omega}.
\]

\subsection{Sub-Laplacian and hypoelliptic heat kernel}

\begin{definition} \label{df.HorizontalGradient}
For any $f\in C^{\infty}(\mathbb{H}^{n}_{\omega})$, we let
\begin{align*}
\nabla_{\mathcal{H}}^{\omega}f=\nabla_{\mathcal{H}^{\omega}}^{\omega}f:= \sum_{i=1}^{n}
\left((X_i^{\omega}f)X_i^{\omega}+(Y_i^{\omega}f)Y_i^{\omega}\right)
\end{align*}
to be the \emph{horizontal gradient}.
\end{definition}

By the classical result in \cite{Hormander1967a} H\"{o}rmander's condition implies that the \emph{sub-Laplacian}
\begin{align} \label{df.SubLaplacian}
\Delta_{\mathcal{H}}^{\omega}=\Delta_{\mathcal{H}^{\omega}}^{\omega}:=\sum_{i=1}^{n} \left(\left(X_i^{\omega}\right)^2+\left(Y_i^{\omega}\right)^2\right)
\end{align}
is a hypoelliptic operator. For more on properties of $\Delta_{\mathcal{H}}^{\omega}$ in a more general setting we refer to \cite[Section 3]{DriverGrossSaloff-Coste2009a}, some of which we describe below. In particular, the sub-Laplacian only depends on the sub-Riemannian metric $\langle \cdot,\cdot\rangle^{\omega}_{\mathcal{H}^{\omega}}$ but it is independent of the choice of orthonormal frame by \cite[Theorem 3.8]{GordinaLaetsch2016a}.

Next, we define the hypoelliptic heat kernel measure on $\mathbb{H}^n_{\omega}$. First we choose a bi-invariant Haar measure $dg$ on $\mathbb{H}^{n}_{\omega}$ to be the Lebesgue measure
\[
dg=dx_{1}dy_{1}\cdots dx_{n}dy_{n}dz
\]
on $\mathbb{R}^{2n+1}$. The sub-Laplacian $\Delta_{\mathcal{H}}^{\omega}$ is essentially self-adjoint on $C_{c}^{\infty}\left( \mathbb{H}^{n}_{\omega} \right)$ in $L^{2}\left( \mathbb{H}^{n}_{\omega}, dg \right)$. The corresponding semigroup by $e^{t \Delta_{\mathcal{H}}^{\omega}/2}$  admits a probability transition kernel $\mu_t^{\omega}\left( g, dh \right)$ such that $\mu_t^{\omega}\left( g, A \right)\geqslant 0$ for all Borel sets $A$ and
\[
\left( e^{t \Delta_{\mathcal{H}}^{\omega}/2}f \right)\left( g \right)=\int_{\mathbb{H}^{n}_{\omega}} f\left( h \right) \mu_t^{\omega}\left( g, dh \right)
\]
for all $f \in L^{2}\left( \mathbb{H}^{n}_{\omega}, dg \right)$.

As explained at \cite[p. 952]{DriverGrossSaloff-Coste2009a} the transition kernel measure $\mu_t^{\omega}\left( g, dh \right)$ admits a continuous density, $p_t^{\omega}\left( g, h \right)$, with respect to the Haar measure $dg$

\begin{align}\label{eqn.HeatKernelMeasureDF}
\mu_t^{\omega}\left( g, dh \right)=p_{t}^{\omega}\left( g, h \right)dh.
\end{align}
Note that the sub-Laplacian $\Delta_{\mathcal{H}}^{\omega}$ commutes with left translations which together with bi-invariance of the Haar measure imply that

\begin{align} \label{eqn.LeftInvariance}
p_{t}^{\omega}\left( g, h \right)=p_{t}^{\omega}\left( e, g^{-1}h \right),
\end{align}
therefore it suffices to look at the function $p_{t}^{\omega}\left( e, g\right)$. From now on we use $p_{t}^{\omega}\left( g \right)$ to denote this function and we will refer to it as the \emph{heat kernel}.

\begin{remark}
An explicit formula for $p_t^{\omega}(g)$ is
\begin{align} \label{eqn.HeatKernelFormulaNonisotropic}
& p_t^{\omega}(g)=p_t^{\omega}\left(\mathbf{v}_1,\cdots,\mathbf{v}_n,z\right)
\\
& = \frac{1}{(2\pi t)^{n+1}}\int_{\mathbb{R}} e^{\frac{1}{t}\left(2izs -\sum_{j=1}^n\frac{\alpha_js}{2} \coth\left(\alpha_js\right) \Vert \mathbf{v}_j\Vert^2\right)} \prod_{j=1}^n\left( \frac{\alpha_js}{\sinh\left(\alpha_js \right)}\right) ds
\notag
\end{align}
for any $g=\left(\mathbf{v}_1,\cdots,\mathbf{v}_n,z\right)=(x_{1},y_{1},\cdots,x_{n},y_{n},z)\in\mathbb{H}^{n}_{\omega}$ with $\mathbf{v}_j=(x_j,y_j)$ for $j=1,\cdots,n$ and $\Vert \cdot \Vert$ is the Euclidean norm on $\mathbb{R}^2$; see for example \cite{LiHong-QuanZhang2019}. By \eqref{eqn.HeatKernelFormulaNonisotropic} we see that
\begin{align} \label{eqn.TimeHomogeneity}
p_t^{\omega}\left(\delta_{\lambda}(g)\right)=\frac{1}{\lambda^{2(n+1)}} p_{\frac{t}{\lambda^2}}^{\omega}(g), \, g\in \mathbb{H}^{n}_{\omega}.
\end{align}
\end{remark}

\begin{definition} \label{df.HeatKernelMeasureNonisotropic}
We call a family of measures $\{\mu_t^{\omega}\}_{t>0}$ on $\mathbb{H}^{n}_{\omega}$ with
\[
d\mu_t^{\omega}\left(  g \right)=\mu_t^{\omega}\left(  dg \right)=\mu_t^{\omega}\left( e, dg \right)=p_{t}^{\omega}\left( g \right)dg
\]
the \emph{heat kernel measure}.
\end{definition}

By \cite[Theorem 3.4 (ii)]{DriverGrossSaloff-Coste2009a}, $\{\mu_t^{\omega}\}_{t>0}$ is a family of probability measures. In addition \cite[Theorem 6.15]{DriverGrossSaloff-Coste2010} gives an equivalent way of defining the heat kernel measure, which we will use later. For completeness, we include its statement below.

\begin{proposition}[Theorem 6.15 in \cite{DriverGrossSaloff-Coste2010}] \label{prop.HeatKernelMeasureDF2}
$\{\mu_t^{\omega}\}_{t>0}$ is the unique family of probability measures on $\mathbb{H}^n_{\omega}$ that satisfies the heat equation as follows
\begin{align} \label{eqn.HeatEquation}
& \frac{d}{dt} \int_{\mathbb{H}^{n}_{\omega}}f\left( g \right)d\mu_t^{\omega}\left(g \right)=\int_{\mathbb{H}^{n}_{\omega}}\left(\frac{1}{2}\Delta_{\mathcal{H}}^{\omega}f\right)\left( g \right)d\mu_t^{\omega}\left( g \right),
\\
&
\lim_{t \to 0}\int_{\mathbb{H}^{n}_{\omega}}f\left( g \right)d\mu_t^{\omega}\left( g \right)=f(e)
\notag
\end{align}
for any $t>0$ and any $f\in C^{\infty}_c\left(\mathbb{H}^{n}_{\omega}\right)$.
\end{proposition}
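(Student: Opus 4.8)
The plan is to separate the claim into \emph{existence}, which is essentially built into the definition of $\mu_t^{\omega}$, and \emph{uniqueness}, which is the real content. \textbf{Existence.} Since $\mu_t^{\omega}=\mu_t^{\omega}(e,\cdot)$, the construction of the hypoelliptic heat semigroup gives $\int_{\mathbb{H}^{n}_{\omega}}f\,d\mu_t^{\omega}=\bigl(e^{t\Delta_{\mathcal H}^{\omega}/2}f\bigr)(e)$ for every $f\in C_c^{\infty}(\mathbb{H}^{n}_{\omega})$. For the differential identity in \eqref{eqn.HeatEquation} I would differentiate under the integral sign in $\int f(h)p_t^{\omega}(h)\,dh$, use the forward equation $\partial_t p_t^{\omega}=\tfrac12\Delta_{\mathcal H}^{\omega}p_t^{\omega}$ satisfied by the smooth heat kernel, and then integrate by parts to move $\Delta_{\mathcal H}^{\omega}$ onto $f$; this is legitimate because $\Delta_{\mathcal H}^{\omega}$ is symmetric on $C_c^{\infty}(\mathbb{H}^{n}_{\omega})$, $f$ has compact support, and $p_t^{\omega}$ together with its horizontal derivatives is integrable and decaying, while the differentiation under the integral is justified by local bounds on $\partial_t p_t^{\omega}$ over $\mathrm{supp}(f)$. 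For the initial condition I would use the dilation scaling \eqref{eqn.TimeHomogeneity}: the substitution $g=\delta_{\sqrt t}(h)$ (Jacobian $t^{n+1}$) turns \eqref{eqn.TimeHomogeneity} into $\int_{\mathbb{H}^{n}_{\omega}}f\,d\mu_t^{\omega}=\int_{\mathbb{H}^{n}_{\omega}}f\bigl(\delta_{\sqrt t}(h)\bigr)\,d\mu_1^{\omega}(h)$, and since $\delta_{\sqrt t}(h)\to e$ as $t\to 0^{+}$ for each fixed $h$ while $\mu_1^{\omega}$ is a probability measure, dominated convergence yields $\lim_{t\to 0}\int f\,d\mu_t^{\omega}=f(e)$.

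\textbf{Uniqueness.} Suppose $\{\nu_t\}_{t>0}$ is another family of probability measures satisfying \eqref{eqn.HeatEquation}. Fix $T>0$ and $f\in C_c^{\infty}(\mathbb{H}^{n}_{\omega})$, and set $u(t,\cdot):=e^{(T-t)\Delta_{\mathcal H}^{\omega}/2}f$ for $0\le t\le T$; by essential self-adjointness of $\Delta_{\mathcal H}^{\omega}$ and hypoellipticity, $u(t,\cdot)$ is smooth and bounded with bounded horizontal derivatives, and it solves the backward equation $\partial_t u=-\tfrac12\Delta_{\mathcal H}^{\omega}u$. The goal is to show that $\phi(t):=\int_{\mathbb{H}^{n}_{\omega}}u(t,\cdot)\,d\nu_t$ is constant on $(0,T]$: granting an extension of \eqref{eqn.HeatEquation} to time-dependent test functions of the form $u(t,\cdot)$, one gets $\phi'(t)=\int_{\mathbb{H}^{n}_{\omega}}\partial_t u\,d\nu_t+\tfrac12\int_{\mathbb{H}^{n}_{\omega}}\Delta_{\mathcal H}^{\omega}u\,d\nu_t=0$. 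Evaluating at $t=T$ gives $\phi(T)=\int f\,d\nu_T$, and letting $t\to 0^{+}$, using that $u(t,\cdot)\to e^{T\Delta_{\mathcal H}^{\omega}/2}f$ uniformly and the initial condition for $\{\nu_t\}$, gives $\phi(0^{+})=\bigl(e^{T\Delta_{\mathcal H}^{\omega}/2}f\bigr)(e)=\int f\,d\mu_T^{\omega}$. Hence $\int f\,d\nu_T=\int f\,d\mu_T^{\omega}$ for all $f\in C_c^{\infty}(\mathbb{H}^{n}_{\omega})$, and since $T>0$ is arbitrary, $\nu_T=\mu_T^{\omega}$.

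\textbf{Main obstacle.} The delicate step is extending \eqref{eqn.HeatEquation}, stated only for time-independent $f\in C_c^{\infty}$, to the smooth but non-compactly-supported time-dependent functions $u(t,\cdot)$, and making the passage $t\to 0^{+}$ rigorous. I would handle this with a cutoff: insert $\chi_R\,u(t,\cdot)$ where $\chi_R\equiv 1$ on the Carnot--Carath\'eodory ball of radius $R$, $\mathrm{supp}\,\chi_R$ lies in the ball of radius $2R$, and $|\nabla_{\mathcal H}^{\omega}\chi_R|=O(1/R)$, $|\Delta_{\mathcal H}^{\omega}\chi_R|=O(1/R^{2})$ (obtained by rescaling a fixed bump via $\delta_\lambda$). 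The resulting error terms are supported on the annulus between the two balls and are bounded by $\nu_t$ of that annulus times the uniform bounds on $u(t,\cdot)$ and $\nabla_{\mathcal H}^{\omega}u(t,\cdot)$, so they vanish as $R\to\infty$ provided $\{\nu_t\}$ is tight uniformly for $t$ in compact subsets of $(0,T]$. All the uniform control invoked here — boundedness of $u$ and its horizontal gradient, the forward equation, and the derivative estimates used in the existence part — follows from Gaussian-type bounds for $p_t^{\omega}$ and its horizontal derivatives, available from the explicit formula \eqref{eqn.HeatKernelFormulaNonisotropic} or from the general hypoelliptic heat kernel theory referenced in the text.
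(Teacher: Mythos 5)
The paper does not actually prove this proposition: it is imported verbatim as Theorem 6.15 of \cite{DriverGrossSaloff-Coste2010} and included, as the surrounding text says, only for completeness. Your proposal therefore does genuinely more work than the paper, and the route you take --- the semigroup representation plus the forward equation and integration by parts for existence, the dilation identity \eqref{eqn.TimeHomogeneity} for the initial condition, and the duality argument $\phi(t)=\int e^{(T-t)\Delta_{\mathcal H}^{\omega}/2}f\,d\nu_t$ with a cutoff for uniqueness --- is the standard and correct one. Two remarks on the points you flag as delicate. First, the uniform-in-$t$ tightness of $\{\nu_t\}$ on compact subsets of $(0,T]$ is not actually needed in the cutoff step: each $\nu_t$ is a probability measure, so the annulus has $\nu_t$-mass at most $1$, and your bounds $|\nabla_{\mathcal H}^{\omega}\chi_R|=O(1/R)$ and $|\Delta_{\mathcal H}^{\omega}\chi_R|=O(1/R^2)$, combined with a bound on $\sup_{t\le T}\|\nabla_{\mathcal H}^{\omega}u(t,\cdot)\|_{\infty}$ (this is where a Driver--Melcher-type $L^{\infty}$ gradient estimate \cite{DriverMelcher2005} genuinely enters, since left-invariant derivatives do not commute with the semigroup), already make the error $O(1/R)$ uniformly in $t$. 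Second, what you do need and do not quite address is control of $\nu_s$ outside large Carnot--Carath\'eodory balls $B_R$ as $s\to 0^{+}$, in order to pass from $\lim_{s\to 0}\int u(s,\cdot)\,d\nu_s$ to $u(0,e)$: the function $u(0,\cdot)=e^{T\Delta_{\mathcal H}^{\omega}/2}f$ is not compactly supported, so one must know $\limsup_{s\to 0}\nu_s(B_R^{c})\to 0$ as $R\to\infty$. This follows from the assumed initial condition applied to compactly supported cutoffs, via $\nu_s(B_{2R}^{c})\le 1-\int\chi_R\,d\nu_s\to 1-\chi_R(e)=0$. With these two points supplied, and the routine extension of \eqref{eqn.HeatEquation} to time-dependent test functions that you already note, your argument closes.
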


\begin{definition}
We say that $\mathbb{H}^{n}_{\omega}$ with the heat kernel measure $\mu_t^{\omega}$ satisfies a \emph{logarithmic Sobolev inequality with constant $C\left(\omega, t\right)$} if
\begin{align} \label{LSI}
& \int_{\mathbb{H}^{n}_{\omega}}f^2\log f^2d\mu_t^{\omega}-\left(\int_{\mathbb{H}^{n}_{\omega}}f^2 d\mu_t^{\omega}\right)\log\left(\int_{\mathbb{H}^{n}_{\omega}}f^2d\mu_t^{\omega}\right)
\\
&
\leqslant C\left(\omega, t\right)\int_{\mathbb{H}^{n}_{\omega}} \vert \nabla_{\mathcal{H}}^{\omega}f\vert_{\mathcal{H}^{\omega}}^2 d\mu_t^{\omega}
\notag
\end{align}
for any $f\in C^{\infty}_c\left(\mathbb{H}^{n}_{\omega}\right)$ and any $t>0$. In such a case we also say that $\operatorname{LSI}\left(C\left(\omega, t \right), \mu_t^{\omega}\right)$ holds.
\end{definition}

As we mentioned in the introduction, the logarithmic Sobolev inequality with respect to the heat kernel measure is known to hold in the isotropic case, both for $n=1$ and $n>1$, and we include the result for $n=1$ for a later reference. For $n>1$ we refer to \cite[Theorem 7.3]{HebischZegarlinski2010}. In the statement below we denote the standard symplectic form on $\mathbb{R}^2$ by $\omega_0$, and the corresponding $3$-dimensional isotropic Heisenberg group by $\mathbb{H}^1_{\omega_0}$.

\begin{theorem}[Corollaire 1.2 in \cite{LiHong-Quan2006}]\label{t.HQLi} There is a constant $C\left(\omega_{0}, t\right) \in \left( 0, \infty \right)$ such that
\begin{align*}
& \int_{\mathbb{H}^{1}_{\omega_{0}}}f^2\log f^2d\mu_t^{\omega_{0}}-\left(\int_{\mathbb{H}^{1}_{\omega_{0}}}f^2 d\mu_t^{\omega_{0}}\right)\log\left(\int_{\mathbb{H}^{1}_{\omega_{0}}}f^2d\mu_t^{\omega_{0}}\right)
\\
&
\leqslant C\left(\omega_{0}, t\right)\int_{\mathbb{H}^{1}_{\omega_{0}}} \vert \nabla_{\mathcal{H}}^{\omega_{0}}f\vert_{\mathcal{H}^{\omega_{0}}}^2 d\mu_t^{\omega_{0}}
\notag
\end{align*}
for any $f\in C^{\infty}_c\left(\mathbb{H}^{1}_{\omega_{0}}\right)$ and any $t>0$.
\end{theorem}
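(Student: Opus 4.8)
\subsection*{A proof proposal}

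The plan is to derive the logarithmic Sobolev inequality from a time-independent $L^1$ gradient bound for the hypoelliptic heat semigroup $P_t^{\omega_0}:=e^{t\Delta_{\mathcal H}^{\omega_0}/2}$ on $\mathbb{H}^1_{\omega_0}$, via the classical semigroup interpolation. The essential input is the estimate
\[
\bigl|\nabla_{\mathcal H}^{\omega_0}P_t^{\omega_0}f\bigr|_{\mathcal H^{\omega_0}}\leqslant C\,P_t^{\omega_0}\!\bigl(\bigl|\nabla_{\mathcal H}^{\omega_0}f\bigr|_{\mathcal H^{\omega_0}}\bigr),\qquad f\in C_c^{\infty}(\mathbb{H}^1_{\omega_0}),\ t>0,
\]
with a finite constant $C$ which may be taken independent of $t$. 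The $t$-independence follows from the scaling structure: under the dilations $\delta_\lambda$ one has $\bigl|\nabla_{\mathcal H}^{\omega_0}(f\circ\delta_\lambda)\bigr|_{\mathcal H^{\omega_0}}=\lambda\,\bigl(\bigl|\nabla_{\mathcal H}^{\omega_0}f\bigr|_{\mathcal H^{\omega_0}}\circ\delta_\lambda\bigr)$ and $\Delta_{\mathcal H}^{\omega_0}(f\circ\delta_\lambda)=\lambda^{2}(\Delta_{\mathcal H}^{\omega_0}f)\circ\delta_\lambda$, hence $P_t^{\omega_0}(f\circ\delta_\lambda)=(P_{\lambda^{2}t}^{\omega_0}f)\circ\delta_\lambda$, so the best constant at time $\lambda^{2}t$ is controlled by the one at time $t$. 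Establishing this $L^1$ bound is the heart of the matter and the main obstacle: it rests on sharp two-sided Gaussian-type estimates for $p_t^{\omega_0}$ and for its horizontal gradient, extracted from the explicit representation \eqref{eqn.HeatKernelFormulaNonisotropic} by a delicate complex/saddle-point analysis, which is precisely the content of \cite{LiHong-Quan2006}. (The $L^p$ analogues with $p>1$, due to Driver and Melcher, are more accessible but do not directly yield a logarithmic Sobolev inequality, so the case $p=1$ is genuinely needed; even without exact $t$-independence it would be enough that $r\mapsto C(r)^2$ be integrable near $0$.)

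Granting the gradient estimate, fix $f\in C_c^{\infty}(\mathbb{H}^1_{\omega_0})$ with $f\not\equiv0$ and $t>0$, and recall that $(P_s^{\omega_0}g)(e)=\int_{\mathbb{H}^1_{\omega_0}}g\,d\mu_s^{\omega_0}$. Put
\[
\Lambda(s):=P_s^{\omega_0}\!\Bigl((P_{t-s}^{\omega_0}f^2)\log(P_{t-s}^{\omega_0}f^2)\Bigr)(e),\qquad s\in[0,t],
\]
so that $\Lambda(t)-\Lambda(0)$ is exactly the left-hand side of the asserted inequality. Writing $u=u_s:=P_{t-s}^{\omega_0}f^2$, which is strictly positive for $s<t$, using $\partial_s u=-\tfrac12\Delta_{\mathcal H}^{\omega_0}u$ and the chain rule $\Delta_{\mathcal H}^{\omega_0}(u\log u)=(\log u+1)\Delta_{\mathcal H}^{\omega_0}u+|\nabla_{\mathcal H}^{\omega_0}u|_{\mathcal H^{\omega_0}}^{2}/u$, and differentiating under the integral (legitimate by the smoothness and rapid decay of the heat kernel and its derivatives), the first-order terms cancel and one obtains
\[
\Lambda'(s)=\frac12\,P_s^{\omega_0}\!\left(\frac{\bigl|\nabla_{\mathcal H}^{\omega_0}P_{t-s}^{\omega_0}f^2\bigr|_{\mathcal H^{\omega_0}}^{2}}{P_{t-s}^{\omega_0}f^2}\right)(e).
\]
Since $|\nabla_{\mathcal H}^{\omega_0}(f^2)|_{\mathcal H^{\omega_0}}^2/f^2=4|\nabla_{\mathcal H}^{\omega_0}f|_{\mathcal H^{\omega_0}}^2$ is bounded, the integrand has a finite limit as $s\uparrow t$ and no regularization is needed; alternatively one may replace $f^2$ by $f^2+\varepsilon$ throughout and let $\varepsilon\downarrow0$ at the end by dominated convergence.

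Next, apply the gradient estimate to the function $f^2$, together with $|\nabla_{\mathcal H}^{\omega_0}f^2|_{\mathcal H^{\omega_0}}=2|f|\,|\nabla_{\mathcal H}^{\omega_0}f|_{\mathcal H^{\omega_0}}$ and the Cauchy--Schwarz inequality for the Markov operator $P_{t-s}^{\omega_0}$, to get
\[
\bigl|\nabla_{\mathcal H}^{\omega_0}P_{t-s}^{\omega_0}f^2\bigr|_{\mathcal H^{\omega_0}}^{2}\leqslant 4C^{2}\,\bigl(P_{t-s}^{\omega_0}f^2\bigr)\,P_{t-s}^{\omega_0}\!\bigl(|\nabla_{\mathcal H}^{\omega_0}f|_{\mathcal H^{\omega_0}}^{2}\bigr).
\]
Dividing by $P_{t-s}^{\omega_0}f^2$, inserting into the formula for $\Lambda'(s)$, and using the semigroup property $P_s^{\omega_0}P_{t-s}^{\omega_0}=P_t^{\omega_0}$ gives $\Lambda'(s)\leqslant 2C^{2}\,(P_t^{\omega_0}|\nabla_{\mathcal H}^{\omega_0}f|_{\mathcal H^{\omega_0}}^{2})(e)$ for every $s\in[0,t]$. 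Integrating this over $[0,t]$ and rewriting the right-hand side as a $\mu_t^{\omega_0}$-integral yields the logarithmic Sobolev inequality on $\mathbb{H}^1_{\omega_0}$ with
\[
C(\omega_0,t)=2C^{2}t<\infty,
\]
which is linear in $t$, as in the Gaussian case. This constant is far from sharp, but finiteness is all that is required for Theorem~\ref{t.HQLi}.
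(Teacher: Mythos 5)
Your proposal is correct and follows essentially the same route as the source the paper relies on: Theorem~\ref{t.HQLi} is quoted from \cite{LiHong-Quan2006}, where Corollaire~1.2 is deduced from the $L^{1}$ gradient bound (Th\'{e}or\`{e}me~1.1, the $p=1$ Driver--Melcher inequality) by exactly the semigroup interpolation you describe, as the paper itself records in Remark~\ref{rem.HQLi}. You correctly identify the $L^{1}$ gradient estimate as the genuinely hard input to be imported rather than proved, and the factor-of-two discrepancy between your $2C^{2}t$ and the paper's $C^{2}t$ is only a matter of normalization conventions, not a gap.
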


\begin{remark}\label{rem.HQLi} In addition to the statement above H.-Q.~Li proved that

\[
C\left(\omega_{0}, t\right)=C^{2}t,
\]
where $C$ is the constant in the Driver-Melcher inequality \cite[Th\'{e}or\`{e}me 1.1]{LiHong-Quan2006} proved originally in \cite{DriverMelcher2005} for $p>1$. To the best of our knowledge there is no sharpness result for this inequality.
\end{remark}

First we reduce consideration of the logarithmic Sobolev inequalities on $\mathbb{H}^{n}_{\omega}$  by relying on time-homogeneity of the heat kernel \eqref{eqn.TimeHomogeneity} to concentrate on the case of $t=1$.

\begin{proposition} \label{prop.LSITimeScaling}
Suppose  $\mathbb{H}^{n}_{\omega}$ is an non-isotropic Heisenberg group, then if $\operatorname{LSI}\left(C\left(\omega\right),\mu^{\omega}_{1}\right)$ holds, then  $\operatorname{LSI}\left(C\left(\omega, t \right),\mu_t^{\omega}\right)$ holds for any $t>0$, where  $C\left( \omega, t \right)=C\left(\omega\right)t$.

\end{proposition}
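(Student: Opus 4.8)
The plan is to use the scaling (dilation) structure of $\mathbb{H}^n_\omega$ to reduce the general-$t$ inequality to the $t=1$ case. Fix $t>0$ and set $\lambda=\sqrt{t}$. First I would record how each of the three ingredients in \eqref{LSI} transforms under the dilation $\delta_\lambda(\mathbf v,z)=(\lambda\mathbf v,\lambda^2 z)$: the heat kernel measure by the time-homogeneity relation \eqref{eqn.TimeHomogeneity}, the horizontal gradient by the fact that $\delta_\lambda$ is a Lie group automorphism acting on the horizontal layer by multiplication by $\lambda$, and the $d\mu$-integrals via the change-of-variables formula. Concretely, given $f\in C_c^\infty(\mathbb{H}^n_\omega)$, I would introduce $\tilde f:=f\circ\delta_\lambda$, which is again in $C_c^\infty(\mathbb{H}^n_\omega)$, and show that applying the assumed $\operatorname{LSI}(C(\omega),\mu_1^\omega)$ to $\tilde f$ yields, after undoing the substitution, exactly $\operatorname{LSI}(C(\omega)t,\mu_t^\omega)$ for $f$.

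The key computational steps, carried out in order, are as follows. (i) Change of variables: for any integrable $h$, $\int_{\mathbb{H}^n_\omega} h(g)\,d\mu_t^\omega(g)=\int_{\mathbb{H}^n_\omega} h(\delta_\lambda(g'))\,d\mu_1^\omega(g')$. This follows by writing $d\mu_t^\omega(g)=p_t^\omega(g)\,dg$, substituting $g=\delta_\lambda(g')$ so that $dg=\lambda^{2(n+1)}\,dg'$ (the Jacobian of $\delta_\lambda$ on $\mathbb{R}^{2n+1}$), and using \eqref{eqn.TimeHomogeneity} with the parameter chosen so that $p_t^\omega(\delta_\lambda(g'))=\lambda^{-2(n+1)}p_{t/\lambda^2}^\omega(g')=\lambda^{-2(n+1)}p_1^\omega(g')$; the two powers of $\lambda$ cancel. (ii) Apply (i) with $h=f^2\log f^2$ and with $h=f^2$ to see that the entropy functional of $f$ with respect to $\mu_t^\omega$ equals the entropy functional of $\tilde f=f\circ\delta_\lambda$ with respect to $\mu_1^\omega$. (iii) Gradient scaling: since $X_i^\omega,Y_i^\omega$ are left-invariant and $\delta_\lambda$ is an automorphism with $\delta_{\lambda\ast}X_i^\omega=\lambda X_i^\omega$, $\delta_{\lambda\ast}Y_i^\omega=\lambda Y_i^\omega$, one gets $(X_i^\omega\tilde f)(g')=\lambda (X_i^\omega f)(\delta_\lambda(g'))$ and similarly for $Y_i^\omega$; hence $|\nabla_{\mathcal H}^\omega\tilde f|_{\mathcal H^\omega}^2(g')=\lambda^2|\nabla_{\mathcal H}^\omega f|_{\mathcal H^\omega}^2(\delta_\lambda(g'))$. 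Combining with (i) applied to $h=|\nabla_{\mathcal H}^\omega f|_{\mathcal H^\omega}^2$ gives $\int|\nabla_{\mathcal H}^\omega\tilde f|^2\,d\mu_1^\omega=\lambda^2\int|\nabla_{\mathcal H}^\omega f|^2\,d\mu_t^\omega=t\int|\nabla_{\mathcal H}^\omega f|^2\,d\mu_t^\omega$. (iv) Now feed $\tilde f$ into $\operatorname{LSI}(C(\omega),\mu_1^\omega)$ and substitute (ii) on the left and (iii) on the right to conclude.

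The only point needing a little care — and the step I'd flag as the main (though modest) obstacle — is verifying the gradient scaling identity (iii), i.e. that $\delta_\lambda$ scales the horizontal vector fields by exactly $\lambda$. One way is to compute directly from \eqref{e.CanonicalBasis}: if $u=f\circ\delta_\lambda$ then by the chain rule $\partial_{x_i}u(g')=\lambda(\partial_{x_i}f)(\delta_\lambda g')$, $\partial_{y_i}u(g')=\lambda(\partial_{y_i}f)(\delta_\lambda g')$, $\partial_z u(g')=\lambda^2(\partial_z f)(\delta_\lambda g')$; then $X_i^\omega u(g')=\partial_{x_i}u(g')-\tfrac{\alpha_i}{2}y_i'\partial_z u(g')=\lambda(\partial_{x_i}f)(\delta_\lambda g')-\tfrac{\alpha_i}{2}y_i'\lambda^2(\partial_z f)(\delta_\lambda g')$, and since the $y$-coordinate of $\delta_\lambda(g')$ is $\lambda y_i'$, this equals $\lambda\bigl[(\partial_{x_i}f)(\delta_\lambda g')-\tfrac{\alpha_i}{2}(\lambda y_i')(\partial_z f)(\delta_\lambda g')\bigr]=\lambda (X_i^\omega f)(\delta_\lambda g')$, as claimed; the computation for $Y_i^\omega$ is identical. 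Everything else is bookkeeping with the change of variables, and the class $C_c^\infty$ is preserved by precomposition with the diffeomorphism $\delta_\lambda$, so no approximation argument is needed.
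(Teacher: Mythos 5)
Your proposal is correct and follows essentially the same route as the paper: apply the $t=1$ inequality to $f\circ\delta_{\sqrt t}$, use the time-homogeneity relation \eqref{eqn.TimeHomogeneity} to convert $\mu_1^{\omega}$-integrals of $f\circ\delta_{\sqrt t}$ into $\mu_t^{\omega}$-integrals of $f$, and undo the substitution. Your explicit verification of the gradient scaling $\vert\nabla_{\mathcal H}^{\omega}(f\circ\delta_{\lambda})\vert^2=\lambda^2\vert\nabla_{\mathcal H}^{\omega}f\vert^2\circ\delta_{\lambda}$ is a detail the paper leaves implicit, and it is carried out correctly.
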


\begin{proof}
Suppose $f\in C^{\infty}_{c}\left(\mathbb{H}^{n}_{\omega}\right)$ and $t>0$, then we have $f\circ \delta_{\sqrt{t}}\in C^{\infty}_c\left(\mathbb{H}^{n}_{\omega}\right)$, and therefore
\begin{align} \label{e.2.11}
& \int_{\mathbb{H}^{n}_{\omega}}\left(f\circ \delta_{\sqrt{t}}\right)^2\log\left(f\circ \delta_{\sqrt{t}}\right)^2d\mu_{1}^{\omega}
\notag
\\
& -\left(\int_{\mathbb{H}^{n}_{\omega}}(f\circ \delta_{\sqrt{t}})^2d\mu_{1}^{\omega}\right)\log\left(\int_{\mathbb{H}^{n}_{\omega}}(f\circ \delta_{\sqrt{t}})^2d\mu_{1}^{\omega}\right)
\\
&
\leqslant C\left(\omega\right)\int_{\mathbb{H}^{n}_{\omega}}\left\vert \nabla_{\mathcal{H}}^{\omega}(f\circ \delta_{\sqrt{t}})\right\vert_{\mathcal{H}^{\omega}}^2d\mu_{1}^{\omega}.
\notag
\end{align}
Now we can use \eqref{eqn.TimeHomogeneity} with $\lambda=\sqrt{t}$ to see that  $d\mu_{1}^{\omega}=t^{n+1}p_t^{\omega}\left(\delta_{\sqrt{t}}(g)\right)dg$. Then  \eqref{LSI} follows with $C\left(\omega,t\right)=C\left(\omega\right)t$ by using the change of variables $\delta_{\sqrt{t}}(g) \mapsto g$ in \eqref{e.2.11}.
\end{proof}

\section{Logarithmic Sobolev inequalities on $\mathbb{H}^1_{\omega}$} \label{sec.LSINonisotropic.n=1}

\subsection{Comparison between isotropic and non-isotropic Heisenberg groups for $n=1$}

For   $\mathbb{H}^1_{\omega}\cong \mathbb{R}^{3}$ the group law defined by \eqref{GroupLaw} can be written as follows
\begin{align}\label{GroupLaw.n=1}
& \left( x_{1}, y_{1}, z_{1} \right)\star \left( x_{2}, y_{2}, z_{2} \right)
\notag
\\
& =\left( x_{1}+x_{2}, y_{1}+y_{2}, z_{1}+z_{2} + \frac{\alpha}{2}\left( x_{1}y_{2}- x_{2}y_{1}\right)\right)
\end{align}
for some $\alpha>0$. The isotropic Heisenberg group, $\mathbb{H}^1_{\omega_0}$, corresponds to   $\alpha=1$. The difference between $\mathbb{H}^1_{\omega}$ and $\mathbb{H}^1_{\omega_0}$ is the symplectic form $\omega$ on $\mathbb{R}^2$ which in this case is parameterized by $\alpha$ in \eqref{e.SymplForm}. Our goal in this section is to study how the logarithmic Sobolev constant on $\mathbb{H}^1_{\omega}$ depends on the parameter $\alpha$ by comparing the non-isotropic and isotropic cases.

Consider the map

\begin{align}\label{e.SubRiemIsometry}
& F: \mathbb{H}^1_{\omega_0} \rightarrow \mathbb{H}^1_{\omega},
\notag
\\
& F(g)=F\left( x, y, z \right):=\left( x, y, \alpha z \right), g=\left( x, y, z \right) \in \mathbb{H}^1_{\omega_0}.
\end{align}
The next statement shows that we can view $F$ as an isomorphism between sub-Riemannian manifolds.

\begin{lemma}[Comparison between $\mathbb{H}^1_{\omega_0}$ and $\mathbb{H}^1_{\omega}$] \label{lem.ComparisonLemma}
The map $F$ is a Lie group isomorphism commuting with the left translation $L_g$, namely,
\begin{align} \label{T}
F\circ L_{g}=L_{F(g)}\circ F \text{ for any } g\in\mathbb{H}^1_{\omega_0}.
\end{align}
The restriction of the differential of $F$ to each fiber of the horizontal distribution at any $g \in \mathbb{H}^1_{\omega_0}$, $dF_g\vert_{\mathcal{H}^{\omega_0}_{g}}: \mathcal{H}^{\omega_0}_{g} \rightarrow \mathcal{H}^{\omega}_{F(g)}$, is an isometry. Moreover, for any $f\in C^{\infty}_c\left(\mathbb{H}^{1}_{\omega}\right)$
\begin{align} \label{eqn.GradientLengthRelation(1)}
\vert \nabla^{\omega_0}_{\mathcal{H}}(f\circ F)\vert_{\mathcal{H}^{\omega_0}}=\vert \nabla^{\omega}_{\mathcal{H}}f\vert_{\mathcal{H}^{\omega}} \circ F.
\end{align}
The pushforward of the heat kernel measure $\mu_t^{\omega_0}$ by $F$ is the heat kernel measure $\mu_t^{\omega}$ on $\mathbb{H}^1_{\omega}$.
\end{lemma}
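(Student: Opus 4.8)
The plan is to verify the four assertions in the order they appear, since each builds on the previous one. First, for the claim that $F$ is a Lie group isomorphism: the map $F(x,y,z)=(x,y,\alpha z)$ is clearly a linear bijection of $\mathbb{R}^3$, so it suffices to check it intertwines the two group laws. Using \eqref{GroupLaw.n=1} with parameter $1$ on the source and parameter $\alpha$ on the target, one computes that $F\bigl((x_1,y_1,z_1)\star_{\omega_0}(x_2,y_2,z_2)\bigr) = (x_1+x_2, y_1+y_2, \alpha z_1 + \alpha z_2 + \tfrac{\alpha}{2}(x_1y_2-x_2y_1))$, which matches $F(g_1)\star_\omega F(g_2)$ exactly because the $\alpha/2$ factor in the target law acts on the $x,y$ coordinates which $F$ leaves untouched. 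The intertwining relation \eqref{T} with left translations is then automatic: any group homomorphism $\phi$ satisfies $\phi\circ L_g = L_{\phi(g)}\circ\phi$, since $\phi(gh)=\phi(g)\phi(h)$.

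Next, for the isometry claim on horizontal fibers, I would compute $dF_g$ applied to the frame vectors. Since the canonical left-invariant vector fields on $\mathbb{H}^1_{\omega_0}$ are $X^{\omega_0}=\partial_x - \tfrac12 y\,\partial_z$ and $Y^{\omega_0}=\partial_y+\tfrac12 x\,\partial_z$ (here $\alpha=1$), and $F$ has Jacobian $\mathrm{diag}(1,1,\alpha)$, one finds $dF_g(X^{\omega_0}(g)) = \partial_x - \tfrac{\alpha}{2}y\,\partial_z = X^{\omega}(F(g))$ and similarly $dF_g(Y^{\omega_0}(g)) = Y^{\omega}(F(g))$. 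Thus $dF_g$ sends the chosen orthonormal frame of $\mathcal{H}^{\omega_0}_g$ to the chosen orthonormal frame of $\mathcal{H}^{\omega}_{F(g)}$, hence is a fiberwise isometry. The gradient identity \eqref{eqn.GradientLengthRelation(1)} follows from this: by the chain rule $X^{\omega_0}(f\circ F) = \bigl((X^\omega f)\circ F\bigr)$ and likewise for $Y^{\omega_0}$, so $\nabla^{\omega_0}_{\mathcal{H}}(f\circ F) = \sum\bigl((X^\omega f)\circ F\bigr) X^{\omega_0} + \bigl((Y^\omega f)\circ F\bigr) Y^{\omega_0}$, and taking the $|\cdot|_{\mathcal{H}^{\omega_0}}$-norm (with respect to the orthonormal frame) gives $\sqrt{((X^\omega f)^2 + (Y^\omega f)^2)\circ F} = |\nabla^\omega_{\mathcal{H}} f|_{\mathcal{H}^\omega}\circ F$.

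Finally, for the pushforward statement, I would use the characterization of the heat kernel measure in Proposition~\ref{prop.HeatKernelMeasureDF2}: $\{\mu_t^\omega\}_{t>0}$ is the unique family of probability measures on $\mathbb{H}^1_\omega$ satisfying the heat equation \eqref{eqn.HeatEquation} with generator $\tfrac12\Delta^\omega_{\mathcal{H}}$ and initial condition $\delta_e$. Let $\nu_t := F_*\mu_t^{\omega_0}$. Since $F$ is a diffeomorphism sending $e$ to $e$, $\nu_t$ is a probability measure with $\nu_t \to \delta_e$ as $t\to 0$. For a test function $f\in C^\infty_c(\mathbb{H}^1_\omega)$ we have $\int f\,d\nu_t = \int (f\circ F)\,d\mu_t^{\omega_0}$, and differentiating in $t$ using \eqref{eqn.HeatEquation} for $\mu_t^{\omega_0}$ gives $\tfrac{d}{dt}\int f\,d\nu_t = \tfrac12\int \bigl(\Delta^{\omega_0}_{\mathcal{H}}(f\circ F)\bigr)\,d\mu_t^{\omega_0}$. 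The key computation, parallel to the one above, is that $\bigl(X^{\omega_0}\bigr)^2(f\circ F) = \bigl((X^\omega)^2 f\bigr)\circ F$ and similarly for $Y$, so $\Delta^{\omega_0}_{\mathcal{H}}(f\circ F) = \bigl(\Delta^\omega_{\mathcal{H}} f\bigr)\circ F$; hence $\tfrac{d}{dt}\int f\,d\nu_t = \tfrac12\int \bigl(\Delta^\omega_{\mathcal{H}} f\bigr)\circ F\,d\mu_t^{\omega_0} = \tfrac12\int \Delta^\omega_{\mathcal{H}} f\,d\nu_t$. By the uniqueness in Proposition~\ref{prop.HeatKernelMeasureDF2}, $\nu_t = \mu_t^\omega$. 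The only mildly delicate point is justifying differentiation under the integral sign and the $t\to 0$ limit for $f\circ F$, but since $F$ is a global diffeomorphism fixing $e$ and $f\circ F\in C^\infty_c(\mathbb{H}^1_{\omega_0})$, this is immediate from the corresponding properties of $\mu_t^{\omega_0}$; I expect no real obstacle, the content being entirely the two chain-rule identities for the vector fields.
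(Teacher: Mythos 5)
Your proof is correct, and it follows the same overall skeleton as the paper's: establish the homomorphism property, show the frame intertwining relations $dF_g(X^{\omega_0}(g))=X^{\omega}(F(g))$ and $dF_g(Y^{\omega_0}(g))=Y^{\omega}(F(g))$, deduce the gradient identity, and then identify the pushforward measure. The two places where you diverge are worth noting. First, you prove that $F$ is a group isomorphism by directly checking $F(g_1\star_{\omega_0}g_2)=F(g_1)\star_{\omega}F(g_2)$ from the explicit polynomial group law \eqref{GroupLaw.n=1}; the paper instead computes $dF_e(\mathbf{a},c)=(\mathbf{a},\alpha c)$, verifies it preserves the bracket \eqref{e.LieBracket}, and invokes the correspondence between Lie algebra isomorphisms and Lie group isomorphisms for simply connected groups. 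Your route is more elementary and avoids the appeal to \cite[Corollary 3.8]{HallLieBook}, at the cost of a (trivial) coordinate computation; both are fine. Second, for the pushforward you use the uniqueness characterization of Proposition~\ref{prop.HeatKernelMeasureDF2} together with the intertwining $\Delta^{\omega_0}_{\mathcal{H}}(f\circ F)=(\Delta^{\omega}_{\mathcal{H}}f)\circ F$, whereas the paper's primary argument is a change of variables giving the explicit density $p_t^{\omega_0}(F^{-1}(g))/\alpha$, which is then matched against the explicit heat kernel formula \eqref{eqn.HeatKernelFormulaNonisotropic}. The paper explicitly records your route as an admissible alternative, and it has the advantage of not requiring the closed-form expression for the heat kernel; the paper's route has the advantage of producing the density of $F_{\#}\mu_t^{\omega_0}$ explicitly. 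Everything you wrote checks out, including the initial condition $\nu_t\to\delta_e$ (which uses $F(e)=e$) and the justification that $f\circ F\in C^{\infty}_c(\mathbb{H}^1_{\omega_0})$, so the heat-equation characterization applies to the test functions you need.
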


\begin{proof}
Equation~\ref{T} follows directly from  the multiplication law \eqref{GroupLaw.n=1}.

Using explicit formulas for the exponential map and $F$, we see that the differential of $F$ at $e$, $dF_{e}: T_e\mathbb{H}^1_{\omega_0} \rightarrow T_{F(e)}\mathbb{H}^1_{\omega}$ is given by
\begin{align}\label{e.3.5}
dF_e(\mathbf{a}, c)=(\mathbf{a},\alpha c), \,   (\mathbf{a}, c)\in   T_e\mathbb{H}^1_{\omega_0},
\end{align}
which shows that $dF_{e}$ is a bijective linear transformation. By \eqref{e.LieBracket} we see that the Lie brackets are preserved under $dF_{e}$, and thus $dF_{e}$ is a Lie algebra isomorphism. For connected and simply connected Lie groups $\mathbb{H}^1_{\omega_0}$ and $\mathbb{H}^1_{\omega}$, the map $F$ is a Lie group isomorphism by \cite[Corollary 3.8]{HallLieBook}.

At the identity we have $\mathcal{H}^{\omega_0}_e=\mathcal{H}^{\omega}_e=\mathbb{R}^2 \subset \mathbb{R}^3$ and the differential of $F$ at $e$ restricted to $\mathcal{H}^{\omega_0}_e$ is the identity map on $\mathbb{R}^2$ by \eqref{e.3.5}, thus we have $dF_{e}\vert_{\mathcal{H}^{\omega_0}_e}: \mathcal{H}^{\omega_0}_e \rightarrow \mathcal{H}^{\omega}_e$. Note that both $\langle \cdot,\cdot\rangle^{\omega_0}_{\mathcal{H}_e}$ and $\langle \cdot,\cdot\rangle^{\omega}_{\mathcal{H}_e}$ are the same Euclidean inner products, so $dF_{e}\vert_{\mathcal{H}^{\omega_0}_e}:\mathcal{H}^{\omega_0}_e \rightarrow \mathcal{H}^{\omega}_e$ is an isometry.

Now we can use the fact that $F$ commutes with the left multiplication by \eqref{T} to extend this to fibers of the horizontal distribution at any $g \in \mathbb{H}^1_{\omega_0}$. Consider $dF_g\vert_{\mathcal{H}^{\omega_0}_g}: \mathcal{H}^{\omega_0}_g \rightarrow \mathcal{H}^{\omega}_{F(g)}$, then  left-invariance of sub-Riemannian metrics on these groups and \eqref{T} imply that $dF_g\vert_{\mathcal{H}^{\omega_0}_g}: \mathcal{H}^{\omega_0}_g \rightarrow \mathcal{H}^{\omega}_{F(g)}$ is an isometry on sub-Riemannian distributions.

Next, we see that the orthonormal frames $\{X^{\omega}, Y^{\omega}\}$ on  $\left(\mathbb{H}^1_{\omega},\mathcal{H},\langle \cdot,\cdot\rangle^{\omega}_{\mathcal{H}}\right)$ and $\{X^{\omega_0},Y^{\omega_0}\}$ on  $\left(\mathbb{H}^1_{\omega_0},\mathcal{H},\langle \cdot,\cdot\rangle^{\omega_0}_{\mathcal{H}}\right)$ introduced by \eqref{e.CanonicalBasis} satisfy
\begin{align}
& (dF_g)\left(X^{\omega_0}(g)\right)=X^{\omega}(F(g)), \label{eqn.VectorRelation(1)1}
\\
&
(dF_g)\left(Y^{\omega_0}(g)\right)=Y^{\omega}(F(g)). \label{eqn.VectorRelation(1)2}
\end{align}
For any $f\in C^{\infty}_c\left(\mathbb{H}^1_{\omega}\right)$, by \eqref{eqn.VectorRelation(1)1} and \eqref{eqn.VectorRelation(1)2} we have
\begin{align*}
\left(\nabla_{\mathcal{H}}^{\omega}f\right)(F(g)) & =\left((X^{\omega}f)(F(g))\right)X^{\omega}(F(g))+\left((Y^{\omega}f)(F(g))\right)Y^{\omega}(F(g))
\\
&
=(dF_g)\left(\left(\nabla^{\omega_0}_{\mathcal{H}}(f\circ F)\right)(g)\right),
\end{align*}
therefore Equation~\ref{eqn.GradientLengthRelation(1)} follows since $dF_g\vert_{\mathcal{H}^{\omega_0}_g}: \mathcal{H}^{\omega_0}_g \rightarrow \mathcal{H}^{\omega}_{F(g)}$ is an isometry.

Finally, we compute the pushforward measure $F_{\#}\mu_t^{\omega_0}$. For any Borel set $E$ on $\mathbb{H}^{1}_{\omega}$, change of variable gives
\begin{align*}
\int_{F^{-1}(E)}d\mu_t^{\omega_0}=\int_{E} \frac{p_t^{\omega_0}\left(F^{-1}(g)\right)}{\alpha}dg,
\end{align*}
so $F_{\#}\mu_t^{\omega_0}$ has the form
\[
d\left(F_{\#}\mu_t^{\omega_0}\right)=\frac{p_t^{\omega_0}\left(F^{-1}(g)\right)}{\alpha}dg.
\]
Alternatively one can check that $F_{\#}\mu_t^{\omega_0}$ satisfies \ref{eqn.HeatEquation} using \ref{eqn.VectorRelation(1)1} and \ref{eqn.VectorRelation(1)2} without knowing its explicit formula as above. By the explicit formula for $p_t^{\omega}$ on $\mathbb{H}^1_{\omega}$ and Definition~\ref{df.HeatKernelMeasureNonisotropic}, we can show that the pushforward measure $F_{\#}\mu_t^{\omega_0}$ is the heat kernel measure $\mu_t^{\omega}$ on $\mathbb{H}^{1}_{\omega}$.
\end{proof}

\subsection{Logarithmic Sobolev inequalities on $\mathbb{H}^1_{\omega}$}

We start by recalling that by Theorem~\ref{t.HQLi} the logarithmic Sobolev inequality  $\operatorname{LSI}\left(C\left( \omega_0\right), \mu_1^{\omega_0} \right)$ holds on the isotropic Heisenberg group $\mathbb{H}^1_{\omega_{0}}$.

\begin{theorem}\label{thm.LSINonisotropic.n=1}
The logarithmic Sobolev inequality $\operatorname{LSI}(C\left(\omega\right)t,\mu_t^{\omega})$ holds on $\mathbb{H}^1_{\omega}$ with the logarithmic Sobolev constant $C\left(\omega\right)=C\left(\omega_{0}\right)$, the constant for the isotropic Heisenberg group $\mathbb{H}^1_{\omega_{0}}$,  and thus $C\left(\omega\right)$ can be chosen to be independent of $\omega$.
\end{theorem}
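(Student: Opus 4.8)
The plan is to transport the isotropic logarithmic Sobolev inequality of Theorem~\ref{t.HQLi} across the map $F$ using the structural properties collected in Lemma~\ref{lem.ComparisonLemma}. First I would invoke Proposition~\ref{prop.LSITimeScaling} to reduce the claim to the single statement $\operatorname{LSI}\left(C(\omega_0),\mu_1^{\omega}\right)$, i.e. it suffices to establish \eqref{LSI} on $\mathbb{H}^1_{\omega}$ at $t=1$ with constant $C(\omega_0)$; the general-$t$ statement with constant $C(\omega_0)t$ then follows automatically, and the independence of $\omega$ is immediate since $C(\omega_0)$ refers only to the isotropic group.

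Next, fix $f\in C^{\infty}_c\left(\mathbb{H}^1_{\omega}\right)$. Since $F$ is a Lie group isomorphism, hence a diffeomorphism, we have $f\circ F\in C^{\infty}_c\left(\mathbb{H}^1_{\omega_0}\right)$, so Theorem~\ref{t.HQLi} applies to $f\circ F$ on $\mathbb{H}^1_{\omega_0}$ at $t=1$:
\begin{align*}
& \int_{\mathbb{H}^1_{\omega_0}}(f\circ F)^2\log (f\circ F)^2\,d\mu_1^{\omega_0}-\left(\int_{\mathbb{H}^1_{\omega_0}}(f\circ F)^2\,d\mu_1^{\omega_0}\right)\log\left(\int_{\mathbb{H}^1_{\omega_0}}(f\circ F)^2\,d\mu_1^{\omega_0}\right) \\
& \qquad \leqslant C(\omega_0)\int_{\mathbb{H}^1_{\omega_0}} \vert \nabla^{\omega_0}_{\mathcal{H}}(f\circ F)\vert_{\mathcal{H}^{\omega_0}}^2\,d\mu_1^{\omega_0}.
\end{align*}
Now I would rewrite both sides as integrals over $\mathbb{H}^1_{\omega}$ against $\mu_1^{\omega}$. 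For the left-hand side, the change-of-variables (pushforward) formula together with the fact from Lemma~\ref{lem.ComparisonLemma} that $F_{\#}\mu_1^{\omega_0}=\mu_1^{\omega}$ turns each term $\int (f\circ F)^2 \log (f\circ F)^2\,d\mu_1^{\omega_0}$, $\int (f\circ F)^2\,d\mu_1^{\omega_0}$ into the corresponding term with $f$ and $\mu_1^{\omega}$. For the right-hand side, I would apply the gradient-length identity \eqref{eqn.GradientLengthRelation(1)}, namely $\vert \nabla^{\omega_0}_{\mathcal{H}}(f\circ F)\vert_{\mathcal{H}^{\omega_0}}=\vert \nabla^{\omega}_{\mathcal{H}}f\vert_{\mathcal{H}^{\omega}}\circ F$, and then again $F_{\#}\mu_1^{\omega_0}=\mu_1^{\omega}$ to get $\int_{\mathbb{H}^1_{\omega_0}}\vert\nabla^{\omega_0}_{\mathcal{H}}(f\circ F)\vert_{\mathcal{H}^{\omega_0}}^2\,d\mu_1^{\omega_0}=\int_{\mathbb{H}^1_{\omega}}\vert\nabla^{\omega}_{\mathcal{H}}f\vert_{\mathcal{H}^{\omega}}^2\,d\mu_1^{\omega}$. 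Substituting these equalities into the displayed inequality yields \eqref{LSI} for $\mathbb{H}^1_{\omega}$ at $t=1$ with constant $C(\omega_0)$, and Proposition~\ref{prop.LSITimeScaling} upgrades this to $\operatorname{LSI}(C(\omega_0)t,\mu_t^{\omega})$ for all $t>0$.

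Since Lemma~\ref{lem.ComparisonLemma} has already absorbed all the geometric content — that $F$ intertwines the group structures, restricts to an isometry of horizontal fibres, preserves horizontal gradient lengths, and pushes the heat kernel measure forward to the heat kernel measure — the argument above is essentially bookkeeping, and there is no serious obstacle remaining. The one point that warrants care is making sure the change of variables is applied consistently to every term (the two entropy terms and the Dirichlet-form term) so that all measures appearing are genuinely $\mu_1^{\omega}$ rather than some rescaling; this is clean here because $F$ only rescales the central coordinate and Lemma~\ref{lem.ComparisonLemma} records the pushforward identity directly.
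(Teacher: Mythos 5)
Your proposal is correct and follows essentially the same route as the paper: reduce to $t=1$ via Proposition~\ref{prop.LSITimeScaling}, apply Theorem~\ref{t.HQLi} to $f\circ F$, and then use the pushforward identity $F_{\#}\mu_1^{\omega_0}=\mu_1^{\omega}$ together with \eqref{eqn.GradientLengthRelation(1)} from Lemma~\ref{lem.ComparisonLemma} to change variables in every term. No gaps; this matches the paper's argument.
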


\begin{proof}
By \cite[Theorem 7.3]{HebischZegarlinski2010} the isotropic Heisenberg group $\mathbb{H}^1_{\omega_{0}}$ satisfies a logarithmic Sobolev inequality   $\operatorname{LSI}\left(C\left(\omega_0\right),\mu_1^{\omega_0}\right)$. Note that by Proposition~\ref{prop.LSITimeScaling}, it suffices to compare the constants at time $t=1$, that is, to find constants in  $\operatorname{LSI}\left(C\left(\omega\right), \mu_1^{\omega}\right)$ and $\operatorname{LSI}\left(C\left(\omega_0\right),\mu_1^{\omega_0}\right)$.

For any $f \in C^{\infty}_c\left(\mathbb{H}^1_{\omega}\right)$, we have $f\circ F\in C^{\infty}_c\left(\mathbb{H}^1_{\omega_0}\right)$, where $F$ is the map defined by \eqref{e.SubRiemIsometry}. Then we can use the logarithmic Sobolev inequality $\operatorname{LSI}\left(C\left(\omega_0\right),\mu_1^{\omega_0}\right)$  on $\mathbb{H}^1_{\omega_0}$ for $f\circ F$ to see that
\begin{align*}
& \int_{\mathbb{H}^1_{\omega_0}}(f\circ F)^2\log (f\circ F)^2d\mu_1^{\omega_0}-\left(\int_{\mathbb{H}^1_{\omega_0}}(f\circ F)^2d\mu_1^{\omega_0}\right)\log\left(\int_{\mathbb{H}^1_{\omega_0}}(f\circ F)^2d\mu_1^{\omega_0}\right) \notag
\\
&
\leqslant C\left(\omega_0\right)\int_{\mathbb{H}^1_{\omega_0}} \vert \nabla_{\mathcal{H}}^{\omega_0}(f\circ F)\vert_{\mathcal{H}^{\omega_0}}^2 d\mu_1^{\omega_0}.
\end{align*}
Using the change of variables $F(g)\mapsto g$ in this inequality together with Lemma~\ref{lem.ComparisonLemma}, we see that $\operatorname{LSI}\left(C\left(\omega_0\right),\mu_1^{\omega}\right)$ holds on $\mathbb{H}^1_{\omega}$, which implies that we can take $C\left(\omega\right)=C\left(\omega_0\right)$.
\end{proof}

\begin{remark}
Note that the argument used in the proof of Theorem~\ref{thm.LSINonisotropic.n=1} shows that if there is an optimizer for $\operatorname{LSI}\left(C\left(\omega_0\right),\mu_1^{\omega_{0}}\right)$, we can find an optimizer for $\operatorname{LSI}\left(C\left(\omega\right),\mu_1^{\omega}\right)$ using the change of variables. In this case, $\operatorname{LSI}\left(C\left(\omega\right)t,\mu_t^{\omega}\right)$ and $\operatorname{LSI}\left(C\left(\omega_0\right)t,\mu_t^{\omega_0}\right)$ would have the same optimal constant $C\left(\omega_0\right)$, which is independent of the symplectic form $\omega$ as well.
\end{remark}

\begin{remark}
The map $F$ can be regarded as a scaling of the metric on the horizontal space $\mathcal{H}$. Indeed, a scaling of an orthogonal symplectic basis as described by Proposition~\ref{p.SymplBasis} is equivalent to changing parameters $\alpha_{1}, ..., \alpha_{n}$.
Thus Theorem~\ref{thm.LSINonisotropic.n=1} shows that the logarithmic Sobolev constant (and the optimal one if it exists) in the case $n=1$ is independent of the sub-Riemannian metric we equip $\mathbb{R}^3$. Note that this metric can be thought of as a scaling of a symplectic basis in Theorem \ref{t.SymplBasis}. The fact that the logarithmic Sobolev constant is independent of $\omega$ is not surprising when compared to the phenomenon for the Gaussian measure on a Euclidean space equipped with a Riemannian metric corresponding to the covariance of the Gaussian measure.
\end{remark}

\section{Logarithmic Sobolev Inequalities on $\mathbb{H}^{n}_{\omega}$} \label{sec.LSINonisotropic}

\subsection{Tensorization in the sub-Riemannian setting}

Tensorization is a fundamental property of logarithmic Sobolev inequalities, that is, the logarithmic Sobolev inequality  holds on the product space of two probability spaces each of which satisfy a logarithmic Sobolev inequality (e.g. \cite[Theorem 4.4]{GuionnetZegarlinski2003}). Here we include a version of the tensorization of logarithmic Sobolev inequalities on the product group of three-dimensional non-isotropic Heisenberg groups.

Let $\{\mathbb{H}^1_{\omega_j}\}_{j=1}^{n}$ be a family of $3$-dimensional Heisenberg groups. Then the product group $\mathbb{H}^1_{\omega_1}\times\cdots\times \mathbb{H}^1_{\omega_n}$ is a sub-Riemannian manifold with the horizontal sub-bundle $\mathcal{H}:=\mathcal{H}^{\omega_1} \oplus \cdots \oplus \mathcal{H}^{\omega_n}$ defined fiberwise and the corresponding metric $\langle\cdot,\cdot\rangle_{\mathcal{H}}$ and norm $\vert \cdot\vert_{\mathcal{H}}$. The sub-Laplacian for the product group $\mathbb{H}^1_{\omega_{1}} \times \cdots \times \mathbb{H}^1_{\omega_{n}}$ is  $\sum_{j=1}^{n}\Delta^{\omega_j}_{\mathcal{H}}$ which is an operator on the product space as considered in  \cite[Proposition 18]{Schechtman2003a}. For any $f\in C^{\infty}_c\left(\mathbb{H}^1_{\omega_{1}} \times \cdots \times \mathbb{H}^1_{\omega_{n}}\right)$, we denote the horizontal gradient by $\nabla_{\mathcal{H}}f$. Finally the corresponding heat kernel measure $\mu_t$ is the product measure $\mu_t^{\omega_{1}} \otimes\cdots\otimes \mu_t^{\omega_{n}}$.

Applying \cite[Proposition 18]{Schechtman2003a} together with Theorem~\ref{thm.LSINonisotropic.n=1} gives the following results for the product group $\mathbb{H}^1_{\omega_{1}} \times \cdots \times \mathbb{H}^1_{\omega_{n}}$.

\begin{proposition} \label{prop.LSITensorization}
The product group $\mathbb{H}^1_{\omega_1}\times \cdots\times \mathbb{H}^1_{\omega_n}$ satisfies a logarithmic Sobolev inequality $\operatorname{LSI}\left( C\left(\omega_1,\cdots,\omega_n,t\right), \mu_t^{\omega_{1}} \otimes\cdots\otimes \mu_t^{\omega_{n}}\right)$, where the constant can be chosen to be $C\left(\omega_1,\cdots,\omega_n,t\right)=C\left(\omega_0\right)t$ which is independent of the symplectic forms $\omega_1,\cdots, \omega_n$ and $n$.
\end{proposition}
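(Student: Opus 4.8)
The plan is to apply the standard tensorization principle for logarithmic Sobolev inequalities, as stated in \cite[Proposition 18]{Schechtman2003a}, directly to the product group $\mathbb{H}^1_{\omega_1}\times\cdots\times\mathbb{H}^1_{\omega_n}$, and then feed in the uniform (in $\omega$) constant from Theorem~\ref{thm.LSINonisotropic.n=1}. The key structural observation, which I would record first, is that the ingredients for tensorization are genuinely present in the sub-Riemannian setting: the heat kernel measure on the product is the product of the factor heat kernel measures (this follows because the product sub-Laplacian $\sum_{j=1}^n\Delta^{\omega_j}_{\mathcal{H}}$ is, for functions on the product, a sum of operators acting on the separate factors, so the associated semigroup factorizes, and by the uniqueness characterization in Proposition~\ref{prop.HeatKernelMeasureDF2} the transition measure is the product measure $\mu_t^{\omega_1}\otimes\cdots\otimes\mu_t^{\omega_n}$), and the horizontal gradient on the product splits orthogonally as $\nabla_{\mathcal H}f=\bigoplus_{j=1}^n\nabla^{\omega_j}_{\mathcal H}f$ with respect to the fiberwise direct-sum metric $\mathcal H=\mathcal H^{\omega_1}\oplus\cdots\oplus\mathcal H^{\omega_n}$, so that $\vert\nabla_{\mathcal H}f\vert^2_{\mathcal H}=\sum_{j=1}^n\vert\nabla^{\omega_j}_{\mathcal H}f\vert^2_{\mathcal H^{\omega_j}}$.

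With these two facts in hand I would carry out the tensorization by induction on $n$, which is the content of \cite[Proposition 18]{Schechtman2003a}: writing the entropy of $f^2$ on the $n$-fold product as a sum of conditional entropies obtained by integrating out one factor at a time, each conditional entropy is controlled by applying $\operatorname{LSI}(C(\omega_j),\mu_1^{\omega_j})$ (at $t=1$) in the $j$-th variable with the other variables frozen; summing over $j$ and using the orthogonal splitting of the gradient yields $\operatorname{LSI}(\max_j C(\omega_j),\mu_1^{\omega_1}\otimes\cdots\otimes\mu_1^{\omega_n})$ on the product. Since Theorem~\ref{thm.LSINonisotropic.n=1} gives $C(\omega_j)=C(\omega_0)$ for every $j$, independently of $\omega_j$, the resulting constant is exactly $C(\omega_0)$, independent of $\omega_1,\dots,\omega_n$ and of $n$. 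Finally I would invoke Proposition~\ref{prop.LSITimeScaling} (applied on the product group, whose dilation structure is the coordinatewise one) to pass from $t=1$ to arbitrary $t>0$, giving the stated constant $C(\omega_1,\dots,\omega_n,t)=C(\omega_0)t$.

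I do not expect a serious obstacle here; the proposition is essentially a bookkeeping assembly of three previously established facts. The one point that deserves care — and which I would make explicit rather than assume — is the verification that \cite[Proposition 18]{Schechtman2003a}, which is typically stated for products of abstract probability spaces with Dirichlet-form or gradient-type energies, applies verbatim when the ``gradient'' is the horizontal (sub-Riemannian) gradient: what is actually needed is only that the energy functional on the product is the sum of the energies in the separate factors and that $C^\infty_c$ of the product is rich enough for the conditional-entropy decomposition (density of functions of product type, or a direct Fubini argument on $f^2\log f^2$). Both hold here because the vector fields $X_i^{\omega_j},Y_i^{\omega_j}$ in different factors act on disjoint sets of coordinates, so no cross terms appear in $\vert\nabla_{\mathcal H}f\vert^2_{\mathcal H}$, and $C^\infty_c(\mathbb{H}^1_{\omega_1}\times\cdots\times\mathbb{H}^1_{\omega_n})$ contains the tensor products of $C^\infty_c(\mathbb{H}^1_{\omega_j})$. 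Modulo making that remark, the proof is immediate.
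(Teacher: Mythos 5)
Your proposal is correct and follows essentially the same route as the paper, whose proof consists precisely of citing \cite[Proposition 18]{Schechtman2003a} together with Theorem~\ref{thm.LSINonisotropic.n=1}, after noting (as you do) that the product heat kernel measure is $\mu_t^{\omega_1}\otimes\cdots\otimes\mu_t^{\omega_n}$ and that the horizontal gradient splits orthogonally across the factors. Your additional remarks on why the abstract tensorization applies in the sub-Riemannian setting are a reasonable expansion of what the paper leaves implicit.
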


\subsection{From the product group to a non-isotropic Heisenberg group} \label{sec.LSI}

Given $\mathbb{H}^{n}_{\omega}$, we consider $\mathbb{H}^1_{\omega_{1}} \times \cdots \times \mathbb{H}^1_{\omega_{n}}$, where $\omega_i$ for $i=1,\cdots, n$ are symplectic forms on $\mathbb{R}^{2}$ defined by \eqref{e.SymplForm}. The following construction was introduced in \cite[Section 3]{Fraser2001a} and used by \cite[Theorem 7.3]{HebischZegarlinski2010} for the isotropic Heisenberg group $\mathbb{H}^n_{\omega_0}$. Here we extend it to the non-isotropic case. Define
\begin{align}\label{pi}
& \pi:\mathbb{H}^1_{\omega_{1}} \times \cdots \times \mathbb{H}^1_{\omega_{n}} \rightarrow \mathbb{H}^{n}_{\omega}, \notag
\\
& \pi(g_1, \cdots, g_n):=\pi(x_{1}, y_{1}, z_{1}, \cdots, x_{n}, y_{n}, z_{n})=(x_{1}, y_{1}, \cdots, x_{n}, y_{n}, z),
\\
&
z=\sum_{i=1}^{n} z_i \notag
\end{align}
for any $(g_1,\cdots,g_n)\in \mathbb{H}^1_{\omega_{1}} \times \cdots \times \mathbb{H}^1_{\omega_{n}}$. The next statement shows that we can view $\pi$ as a homomorphism between sub-Riemannian manifolds.

\begin{proposition}[$\mathbb{H}^{n}_{\omega}$ and $\mathbb{H}^1_{\omega_{1}} \times \cdots \times \mathbb{H}^1_{\omega_{n}}$] \label{p.QuotientLemma}
The map $\pi$  is a Lie group homomorphism commuting with the left translation $L_{(g_1,\cdots,g_n)}$ on the product group as follows
\begin{align} \label{pi1}
& \pi\circ L_{(g_1,\cdots,g_n)}=L_{\pi(g_1,\cdots,g_n)} \circ \pi,
\\
& (g_1,\cdots,g_n)\in \mathbb{H}^1_{\omega_{1}} \times \cdots \times \mathbb{H}^1_{\omega_{n}}.
\notag
\end{align}
The restriction of the differential of $\pi$  to horizontal spaces,
\[
d\pi_{(g_1,\cdots,g_n)}\vert_{\mathcal{H}_{(g_1,\cdots,g_n)}}: \mathcal{H}_{(g_1,\cdots,g_n)} \rightarrow \mathcal{H}^{\omega}_{\pi(g_1, \cdots, g_n)}
\]
is an isometry. Moreover, for any $f\in C^{\infty}_c\left(\mathbb{H}^{n}_{\omega}\right)$
\begin{align} \label{eqn.GradientLengthRelation(2)}
\vert \nabla_{\mathcal{H}}(f\circ \pi)\vert_{\mathcal{H}}=\vert \nabla^{\omega}_{\mathcal{H}}f\vert_{\mathcal{H}^{\omega}} \circ \pi.
\end{align}
In addition, the pushforward by $\pi$ of the heat kernel measure $\mu_t$ on $\mathbb{H}^1_{\omega_{1}} \times \cdots \times \mathbb{H}^1_{\omega_{n}}$ is the heat kernel measure $\mu_t^{\omega}$ on $\mathbb{H}^n_{\omega}$.
\end{proposition}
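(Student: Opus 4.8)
The plan is to establish the four assertions of Proposition~\ref{p.QuotientLemma} in turn, following the template of Lemma~\ref{lem.ComparisonLemma}. First I would check that $\pi$ is a group homomorphism by a direct computation from the two multiplication laws. Writing $\mathbf{g}=(g_1,\dots,g_n)$, $\mathbf{g}'=(g_1',\dots,g_n')$ with $g_i=(x_i,y_i,z_i)$, the product in $\mathbb{H}^1_{\omega_1}\times\cdots\times\mathbb{H}^1_{\omega_n}$ is coordinatewise, so the $z_i$-component of $g_i\star g_i'$ is $z_i+z_i'+\tfrac{\alpha_i}{2}(x_iy_i'-x_i'y_i)$; summing over $i$ and comparing with \eqref{GroupLaw}--\eqref{e.SymplForm} gives $\pi(\mathbf{g}\star\mathbf{g}')=\pi(\mathbf{g})\star\pi(\mathbf{g}')$, since $\sum_i\alpha_i(x_iy_i'-x_i'y_i)=\omega(\mathbf{v},\mathbf{v}')$. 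The intertwining relation \eqref{pi1} is then immediate from $\pi(L_{\mathbf{g}}\mathbf{h})=\pi(\mathbf{g}\star\mathbf{h})=\pi(\mathbf{g})\star\pi(\mathbf{h})=L_{\pi(\mathbf{g})}\pi(\mathbf{h})$.

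Next, for the isometry statement I would first compute $d\pi$ at the identity. Since $\pi$ is linear in the chosen coordinates, $d\pi_{\mathbf{g}}$ is the same linear map at every point, namely $(\mathbf{a}_1,c_1,\dots,\mathbf{a}_n,c_n)\mapsto(\mathbf{a}_1,\dots,\mathbf{a}_n,\sum_i c_i)$, which sends $\partial_{x_j}\mapsto\partial_{x_j}$, $\partial_{y_j}\mapsto\partial_{y_j}$ and $\partial_{z_j}\mapsto\partial_z$ for each $j$. In particular $d\pi_e$ restricted to $\mathcal{H}_e=\mathbb{R}^{2n}\subset\mathbb{R}^{3n}$ is the identity onto $\mathcal{H}^\omega_e=\mathbb{R}^{2n}\subset\mathbb{R}^{2n+1}$, and since both horizontal inner products are the standard Euclidean one, $d\pi_e|_{\mathcal{H}_e}$ is an isometry. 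For general $\mathbf{g}$ I would transport this using \eqref{pi1} and left-invariance of the two sub-Riemannian metrics, writing $d\pi_{\mathbf{g}}|_{\mathcal{H}_{\mathbf{g}}}=dL_{\pi(\mathbf{g})}\circ d\pi_e|_{\mathcal{H}_e}\circ dL_{\mathbf{g}^{-1}}$, a composition of isometries carrying $\mathcal{H}_{\mathbf{g}}$ onto $\mathcal{H}^\omega_{\pi(\mathbf{g})}$. The explicit form of $d\pi$ also yields, for the canonical horizontal fields $X^{\omega_j},Y^{\omega_j}$ on the $j$-th factor, the frame relations
\[
d\pi_{\mathbf{g}}\big(X^{\omega_j}(\mathbf{g})\big)=X_j^\omega(\pi(\mathbf{g})),\qquad d\pi_{\mathbf{g}}\big(Y^{\omega_j}(\mathbf{g})\big)=Y_j^\omega(\pi(\mathbf{g})).
\]

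With the frame relations in hand, \eqref{eqn.GradientLengthRelation(2)} follows exactly as in Lemma~\ref{lem.ComparisonLemma}: for $f\in C^\infty_c(\mathbb{H}^n_\omega)$ one has $(X_j^\omega f)(\pi(\mathbf{g}))=\big(X^{\omega_j}(f\circ\pi)\big)(\mathbf{g})$ and likewise for $Y_j^\omega$, so $(\nabla^\omega_{\mathcal{H}}f)(\pi(\mathbf{g}))=d\pi_{\mathbf{g}}\big((\nabla_{\mathcal{H}}(f\circ\pi))(\mathbf{g})\big)$; taking $|\cdot|$ and using that $d\pi_{\mathbf{g}}$ is a fiberwise isometry gives the claim. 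Finally, for the pushforward measure I would argue in one of two equivalent ways. The heat kernel on the product group is the product $\prod_{j=1}^n p_t^{\omega_j}(\mathbf{v}_j,z_j)$, so $\pi_{\#}\mu_t$ has density obtained by integrating this product over the fiber $\{z_1+\cdots+z_n=z\}$; this fiber integral is a convolution in the central variable, and since each $p_t^{\omega_j}(\mathbf{v}_j,\cdot)$ is represented by a one-dimensional Fourier integral in a single spectral parameter, the convolution collapses to the single-integral formula \eqref{eqn.HeatKernelFormulaNonisotropic}, i.e.\ $\pi_{\#}\mu_t=\mu_t^\omega$. Alternatively, without the explicit formula, one checks that $\pi_{\#}\mu_t$ solves the characterizing heat equation of Proposition~\ref{prop.HeatKernelMeasureDF2}: the frame relations give $\big(\sum_j\Delta^{\omega_j}_{\mathcal{H}}\big)(f\circ\pi)=(\Delta^\omega_{\mathcal{H}}f)\circ\pi$, whence $\tfrac{d}{dt}\int f\,d(\pi_{\#}\mu_t)=\int\tfrac12(\Delta^\omega_{\mathcal{H}}f)\circ\pi\,d\mu_t=\int\tfrac12\Delta^\omega_{\mathcal{H}}f\,d(\pi_{\#}\mu_t)$, with the initial condition following from weak convergence of $\mu_t$ to $\delta_e$.

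The step requiring the most care is the last one: $\pi$ is not proper, its kernel being the $(n-1)$-dimensional subspace $\{(\mathbf{0},z_1,\dots,\mathbf{0},z_n):\sum_i z_i=0\}$, so $f\circ\pi$ is bounded and smooth but not compactly supported, and Proposition~\ref{prop.HeatKernelMeasureDF2} as stated does not apply to it verbatim. I expect this to be the main obstacle, to be handled either by a cutoff in the central variables together with the Gaussian-type decay of $\mu_t$ (so that the truncation error and its $\Delta^{\omega_j}_{\mathcal{H}}$-derivatives vanish in the limit), or simply by relying on the explicit-formula computation above, which sidesteps the support issue altogether. Everything else is routine bookkeeping parallel to the $n=1$ case.
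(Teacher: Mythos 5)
Your proposal is correct and follows essentially the same route as the paper: verify the homomorphism property, compute $d\pi$ at the identity, transport the isometry by left-invariance via \eqref{pi1}, derive the frame relations $d\pi_{\mathbf{g}}(X^{\omega_j})=X_j^{\omega}\circ\pi$ and $d\pi_{\mathbf{g}}(Y^{\omega_j})=Y_j^{\omega}\circ\pi$, deduce \eqref{eqn.GradientLengthRelation(2)}, and identify $\pi_{\#}\mu_t$ through the heat-equation characterization of Proposition~\ref{prop.HeatKernelMeasureDF2}. Two small differences are worth recording. First, you check the homomorphism property by direct computation with the two multiplication laws, whereas the paper checks that $d\pi_{(e_1,\cdots,e_n)}$ preserves the brackets \eqref{e.LieBracket} and invokes the simply-connected Lie group correspondence; both are fine, and yours is the more elementary. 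Second, and more substantively, you correctly flag a point the paper elides: the paper asserts that $f\in C^{\infty}_c\left(\mathbb{H}^{n}_{\omega}\right)$ implies $f\circ\pi\in C^{\infty}_c$ on the product group, but for $n\geqslant 2$ the fibers of $\pi$ are unbounded affine subspaces $\{(\mathbf{0},z_1,\cdots,\mathbf{0},z_n):\sum z_i=\mathrm{const}\}$ translated, so $f\circ\pi$ is smooth and bounded but not compactly supported, and Proposition~\ref{prop.HeatKernelMeasureDF2} does not apply to it verbatim. Your two proposed remedies --- a cutoff in the central variables justified by the Gaussian-type decay of the product heat kernel, or the direct computation that the fiber integral of $\prod_j p_t^{\omega_j}$ is a convolution in the central variable whose Fourier representation collapses to \eqref{eqn.HeatKernelFormulaNonisotropic} --- are both legitimate ways to close this gap, and either would strengthen the argument as written in the paper.
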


\begin{proof}
Equation \eqref{pi1} follows directly from the multiplication law given by \eqref{GroupLaw}.

Recall again that for connected and simply connected Lie groups to show that a map is a Lie group homomorphism it is enough to check that its differential at the identity is a Lie algebra homomorphism by \cite[Theorem 3.7]{HallLieBook}. Applying this to $\mathbb{H}^1_{\omega_{1}} \times \cdots \times \mathbb{H}^1_{\omega_{n}}$ and $\mathbb{H}^n_{\omega}$, we see it is enough to check that the differential of $\pi$ at the identity is a Lie algebra homomorphism between $T_{(e_1,\cdots,e_n)}\left(\mathbb{H}^1_{\omega_{1}} \times \cdots \times \mathbb{H}^1_{\omega_{n}}\right) \cong T_{e_1}\mathbb{H}^1_{\omega_{1}}\oplus\cdots\oplus T_{e_n}\mathbb{H}^1_{\omega_{n}}$ and $T_{\pi(e_1,\cdots,e_n)}\mathbb{H}^n_{\omega}$. Based on the explicit formula of the exponential map and $\pi$, we have
\begin{align*}
& \left(d\pi_{(e_1,\cdots,e_n)}\right)\left(\mathbf{a}_1,c_1,\cdots,\mathbf{a}_n,c_n\right)=\left(\mathbf{a}_1,\cdots,\mathbf{a}_n,\sum_{i=1}^nc_i\right)
\\
&
\left(\mathbf{a}_1,c_1,\cdots,\mathbf{a}_n,c_n\right)\in T_{(e_1,\cdots,e_n)}\left(\mathbb{H}^1_{\omega_{1}} \times \cdots \times \mathbb{H}^1_{\omega_{n}}\right).
\end{align*}
By \eqref{e.LieBracket} we see that the Lie brackets are preserved under $d\pi_{(e_1,\cdots,e_n)}$, and thus $d\pi_{(e_1,\cdots,e_n)}$ is a Lie algebra homomorphism.

At the identity $(e_1,\cdots,e_n)$, we have $\mathcal{H}_{(e_1,\cdots,e_n)}=\mathcal{H}^{\omega}_e=\mathbb{R}^{2n}$ and the differential of $\pi$ at $(e_1,\cdots,e_n)$ restricted to $\mathcal{H}_{(e_1,\cdots,e_n)}$ is the identity map on $\mathbb{R}^{2n}$. Note that $\langle \cdot,\cdot\rangle_{{\mathcal{H}}_{(e_1,\cdots,e_n)}}$ and $\langle \cdot,\cdot\rangle^{\omega}_{\mathcal{H}_e}$ are the same Euclidean inner products, so
\[
\left.d\pi_{(e_1,\cdots,e_n)}\right|_{\mathcal{H}_{(e_1,\cdots,e_n)}}:\mathcal{H}_{(e_1,\cdots,e_n)} \longrightarrow \mathcal{H}^{\omega}_{e}
\]
is an isometry.

Now we can use the fact that $\pi$ commutes with the left multiplication by Equation \eqref{pi1} to extend this to fibers of the horizontal distribution at any $g \in \mathbb{H}^1_{\omega_1}\times\cdots\times\mathbb{H}^1_{\omega_n}$. Namely, the left-invariance of sub-Riemannian metrics on these groups and \eqref{pi1} imply that
\[
\left.d\pi_{(g_1,\cdots,g_n)}\right|_{\mathcal{H}_{(g_1,\cdots,g_n)}}: \mathcal{H}_{(g_1,\cdots,g_n)} \longrightarrow \mathcal{H}^{\omega}_{\pi(g_1,\cdots,g_n)}
\]
is an isometry on the fibers of these sub-Riemannian distributions.

For any $(g_1,\cdots,g_n)=\left(x_1,y_1,z_1,\cdots,x_n,y_n,z_n\right) \in \mathbb{H}^1_{\omega_{1}} \times \cdots \times \mathbb{H}^1_{\omega_{n}}$ and $i=1, \cdots, n$ consider vector fields

\begin{align*}
& X^{\omega_{i}}\left( g_1, \cdots, g_n \right)=\frac{\partial}{\partial x_i}-\frac{\alpha_i}{2} y_i\frac{\partial}{\partial z_i},
\\
& Y^{\omega_{i}}(g_1, \cdots, g_n)=\frac{\partial}{\partial y_i}+\frac{\alpha_i}{2} x_i\frac{\partial}{\partial z_i}.
\end{align*}
Under the product sub-Riemannian structure $\{X^{\omega_{i}}, Y^{\omega_{i}}:i=1\cdots,n\}$ form an orthonormal frame for $\left(\mathbb{H}^1_{\omega_{1}} \times \cdots \times \mathbb{H}^1_{\omega_{n}}, \mathcal{H}, \langle \cdot,\cdot\rangle_{\mathcal{H}}\right)$. We see that the orthonormal frames   $\{X^{\omega_{i}}, Y^{\omega_{i}}:i=1,\cdots,n\}$ on the product manifold $\left( \mathbb{H}^1_{\omega_{1}} \times \cdots \times \mathbb{H}^1_{\omega_{n}}, \mathcal{H}, \langle \cdot,\cdot\rangle_{\mathcal{H}}\right)$ and $\{X_{i}^{\omega},Y_{i}^{\omega}:i=1,\cdots,n\}$ on $\left( \mathbb{H}^{n}_{\omega}, \mathcal{H}^{\omega}, \langle \cdot,\cdot\rangle^{\omega}_{\mathcal{H}} \right)$ defined by \eqref{e.CanonicalBasis} satisfy
\begin{align}
& (d\pi_{(g_{1},\cdots,g_{n})})\left(X^{\omega_i}(g_{1},\cdots,g_{n})\right)=X_i^{\omega}\left(\pi(g_{1},\cdots,g_{n})\right), \label{eqn.VectorRelation(2)1}
\\
&
(d\pi_{(g_{1},\cdots,g_{n})})\left(Y^{\omega_i}(g_{1},\cdots,g_{n})\right)=Y_i^{\omega}\left(\pi(g_{1},\cdots,g_{n})\right). \label{eqn.VectorRelation(2)2}
\end{align}
This shows that for any $f\in C^{\infty}_c\left(\mathbb{H}^{n}_{\omega}\right)$
\begin{align*}
& \left(\nabla_{\mathcal{H}}^{\omega}f\right) \left(\pi(g_{1},\cdots,g_{n})\right)
\\
& = \sum_{i=1}^{n}\left[
\left(
\left( X_i^{\omega}f \right)
X_i^{\omega}\right)\left(\pi\left( g_{1},\cdots,g_{n} \right) \right)+
\left(
\left( Y_i^{\omega}f \right)
Y_i^{\omega}\right)\left(\pi\left( g_{1},\cdots,g_{n} \right) \right)\right]
\\
&
=(d\pi_{(g_{1},\cdots,g_{n})})\left(\nabla_{\mathcal{H}}(f\circ \pi))(g_{1},\cdots,g_{n})\right),
\end{align*}
which proves \eqref{eqn.GradientLengthRelation(2)}  since $d\pi_{(g_1,\cdots,g_n)}$ restricted to $\mathcal{H}_{(g_1,\cdots, g_n)}$ is an isometry.

Finally, to show that the pushforward of the heat kernel measure $\mu_t$ by $\pi$ is the heat kernel measure $\mu_t^{\omega}$ on $\mathbb{H}^n_{\omega}$ we will use Proposition~\ref{prop.HeatKernelMeasureDF2}. For any $f\in C^{\infty}_c\left(\mathbb{H}^{n}_{\omega}\right)$, we have $f\circ\pi \in C^{\infty}_c\left(\mathbb{H}^1_{\omega_{1}} \times \cdots \times \mathbb{H}^1_{\omega_{n}}\right)$, and therefore by \eqref{eqn.VectorRelation(2)1} and \eqref{eqn.VectorRelation(2)2}
\begin{align}
\left(\sum_{i=1}^n\Delta^{\omega_i}_{\mathcal{H}}\right)\left(f \circ \pi\right)=\left(\Delta^{\omega}_{\mathcal{H}} f\right) \circ \pi. \label{eqn.LaplacianRelation(2)}
\end{align}
By Proposition~\ref{prop.HeatKernelMeasureDF2} applied to the heat kernel measure $\mu_t$ we see that
\begin{align*}
& \frac{d}{dt} \int_{\mathbb{H}^1_{\omega_{1}} \times \cdots \times \mathbb{H}^1_{\omega_{n}}} (f\circ \pi)\left(g_{1},\cdots,g_{n}\right) d\mu_t
\\
& =\int_{\mathbb{H}^1_{\omega_{1}} \times \cdots \times \mathbb{H}^1_{\omega_{n}}} \left(\left(\frac{1}{2}\sum_{i=1}^n\Delta^{\omega_i}_{\mathcal{H}}\right) (f\circ \pi)\right)\left(g_{1},\cdots,g_{n}\right) d\mu_t,
\\
&
\lim_{t \to 0} \int_{\mathbb{H}^1_{\omega_{1}} \times \cdots \times \mathbb{H}^1_{\omega_{n}}} (f \circ \pi)\left(g_{1},\cdots,g_{n}\right) d\mu_t=(f\circ\pi)(e_{1},\cdots,e_{n}).
\end{align*}
Then by Equation~\ref{eqn.LaplacianRelation(2)} and using the change of variables in the heat equation for $\mu_t$ together with Proposition~\ref{prop.HeatKernelMeasureDF2} applied to $\pi_{\#}\mu_t$ implies that $\pi_{\#}\mu_t$ is the heat kernel measure $\mu_t^{\omega}$ on $\mathbb{H}^n_{\omega}$.
\end{proof}

\begin{remark}
From the proof of Proposition~\ref{p.QuotientLemma}, we see that we can identify $\mathcal{H}^{\omega}_g \cong  \mathcal{H}^{\omega_1}_{g_1}\oplus\cdots\oplus\mathcal{H}^{\omega_n}_{g_n}$ for $g\in\mathbb{H}^n_{\omega}$ and $(g_1,\cdots,g_n)\in\mathbb{H}^1_{\omega_{1}} \times \cdots \times \mathbb{H}^1_{\omega_{n}}$ with $g=\pi(g_1,\cdots,g_n)$.
\end{remark}

\begin{remark}
The results in \cite[p. 38]{Fraser2001a} say that we can identify the isotropic Heisenberg group $\mathbb{H}^n_{\omega_0}$ with a quotient group. By Proposition~\ref{p.QuotientLemma}, we can also identify $\mathbb{H}^n_{\omega}$ with a quotient group $G/N$ where $G=\mathbb{H}^1_{\omega_{1}} \times \cdots \times \mathbb{H}^1_{\omega_{n}}$ and $N=\{(\mathbf{0},z_1,\cdots,\mathbf{0},z_n)\in\mathbb{H}^1_{\omega_{1}} \times \cdots \times \mathbb{H}^1_{\omega_{n}}:\sum_{i=1}^nz_i=0\}$
\end{remark}

\begin{theorem}\label{thm.LSINonisotropic}
The logarithmic Sobolev inequality $\operatorname{LSI}\left( C\left(\omega\right)t, \mu_t^{\omega} \right)$ holds on $\mathbb{H}^{n}_{\omega}$, where $C\left(\omega\right)$ can be chosen to be equal to  $C\left(\omega_{0}\right)$. In particular,   the logarithmic Sobolev constant $C\left(\omega\right)$ is independent of both the symplectic form $\omega$ and the dimension of the group $\mathbb{H}^{n}_{\omega}$.
\end{theorem}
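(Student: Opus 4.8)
The plan is to combine the two transfer results already established: the tensorization statement in Proposition~\ref{prop.LSITensorization} and the homomorphism/pushforward statement in Proposition~\ref{p.QuotientLemma}. Given the non-isotropic Heisenberg group $\mathbb{H}^n_\omega$, I first decompose its symplectic form $\omega$ as a sum $\sum_{i=1}^n \omega_i$ of symplectic forms on $\mathbb{R}^2$, exactly as in \eqref{e.SymplForm}, and form the product group $G = \mathbb{H}^1_{\omega_1} \times \cdots \times \mathbb{H}^1_{\omega_n}$ with its product sub-Riemannian structure and product heat kernel measure $\mu_t = \mu_t^{\omega_1} \otimes \cdots \otimes \mu_t^{\omega_n}$. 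By Proposition~\ref{prop.LSITensorization}, $G$ satisfies $\operatorname{LSI}(C(\omega_0)t, \mu_t)$ with a constant depending on neither $n$ nor the $\omega_i$.

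Next I would transport this inequality down to $\mathbb{H}^n_\omega$ through the map $\pi$ of \eqref{pi}. Given $f \in C^\infty_c(\mathbb{H}^n_\omega)$, the pullback $f \circ \pi$ lies in $C^\infty_c(G)$, so I may apply $\operatorname{LSI}(C(\omega_0)t, \mu_t)$ to $f \circ \pi$. On the left-hand side I use that $\pi_{\#}\mu_t = \mu_t^\omega$ (the pushforward claim in Proposition~\ref{p.QuotientLemma}) to rewrite each of the three integrals $\int_G (f\circ\pi)^2 \log(f\circ\pi)^2\, d\mu_t$, etc., as the corresponding integral of $f$ against $\mu_t^\omega$ over $\mathbb{H}^n_\omega$. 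On the right-hand side I use the gradient-length identity \eqref{eqn.GradientLengthRelation(2)}, namely $|\nabla_{\mathcal{H}}(f\circ\pi)|_{\mathcal{H}} = |\nabla^\omega_{\mathcal{H}}f|_{\mathcal{H}^\omega}\circ\pi$, together again with $\pi_{\#}\mu_t = \mu_t^\omega$, to rewrite $\int_G |\nabla_{\mathcal{H}}(f\circ\pi)|^2_{\mathcal{H}}\, d\mu_t = \int_{\mathbb{H}^n_\omega} |\nabla^\omega_{\mathcal{H}}f|^2_{\mathcal{H}^\omega}\, d\mu_t^\omega$. Assembling these substitutions yields exactly \eqref{LSI} for $\mathbb{H}^n_\omega$ with constant $C(\omega_0)t$, which gives $\operatorname{LSI}(C(\omega_0)t, \mu_t^\omega)$ and hence $C(\omega) = C(\omega_0)$; the dimension- and $\omega$-independence is inherited verbatim from Proposition~\ref{prop.LSITensorization}.

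There is really no hard analytic step left here: the two propositions do all the work, and what remains is the bookkeeping of change of variables under $\pi$. The only point that deserves a sentence of care is making sure each of the three functionals on $G$ involves only $f\circ\pi$, so that the pushforward identity applies cleanly term by term — in particular that the entropy functional is transported correctly and not just the Dirichlet form. If one wanted to be cautious about domains, one could note that $C^\infty_c$ functions suffice to establish the inequality and then extend by density in the usual way, but since \eqref{LSI} is stated for $f \in C^\infty_c$ this is not even necessary. So the main ``obstacle'' is purely expository: presenting the substitution compactly rather than repeating the three displayed integrals. I would therefore keep the proof to a few lines, citing Proposition~\ref{prop.LSITensorization} and Proposition~\ref{p.QuotientLemma} and performing the change of variables $\pi$ on \eqref{LSI}.
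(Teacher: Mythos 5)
Your proposal is correct and follows essentially the same route as the paper's proof: apply Proposition~\ref{prop.LSITensorization} to $f\circ\pi$ on the product group $\mathbb{H}^1_{\omega_1}\times\cdots\times\mathbb{H}^1_{\omega_n}$, then use the pushforward identity $\pi_{\#}\mu_t=\mu_t^{\omega}$ and the gradient relation \eqref{eqn.GradientLengthRelation(2)} from Proposition~\ref{p.QuotientLemma} to transport the inequality to $\mathbb{H}^n_{\omega}$ with constant $C(\omega_0)t$. No gaps; this is the paper's argument.
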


\begin{proof}
For any $f\in C^{\infty}_c\left(\mathbb{H}^{n}_{\omega}\right)$, we have $f\circ \pi\in C^{\infty}_c\left(\mathbb{H}^1_{\omega_{1}} \times \cdots \times \mathbb{H}^1_{\omega_{n}}\right)$ where $\pi$ is defined by \eqref{pi}. Then by Proposition~\ref{prop.LSITensorization} we can apply the logarithmic Sobolev inequality $\operatorname{LSI}\left(C\left(\omega_0\right)t,\mu_t\right)$ on $\mathbb{H}^1_{\omega_{1}} \times \cdots \times \mathbb{H}^1_{\omega_{n}}$ to $f\circ \pi$ to see that
\begin{align*}
& \int_{\mathbb{H}^1_{\omega_{1}} \times \cdots \times \mathbb{H}^1_{\omega_{n}}}(f\circ \pi)^2\log (f\circ \pi)^2 d\mu_t
\\
& -\left(\int_{\mathbb{H}^1_{\omega_{1}} \times \cdots \times \mathbb{H}^1_{\omega_{n}}}(f\circ \pi)^2d\mu_t\right) \log \left(\int_{\mathbb{H}^1_{\omega_{1}} \times \cdots \times \mathbb{H}^1_{\omega_{n}}}(f\circ \pi)^2d\mu_t\right)
\\
&
\leqslant C\left(\omega_0\right)t \int_{\mathbb{H}^1_{\omega_{1}} \times \cdots \times \mathbb{H}^1_{\omega_{n}}} \vert \nabla_{\mathcal{H}} (f\circ \pi)\vert^2_{\mathcal{H}} d\mu_t.
\end{align*}
Using the change of variable $\pi\left(g_1,\cdots,g_n\right)\longmapsto g$ in this inequality together with Proposition~\ref{p.QuotientLemma}, we see that $\operatorname{LSI}\left(C\left(\omega_0\right)t,\mu_t^{\omega}\right)$ holds on $\mathbb{H}^n_{\omega}$. Thus  we can take $C\left(\omega\right)=C\left(\omega_0\right)$ which is independent of $\omega$, $n$ and the dimension of the group $\mathbb{H}^{n}_{\omega}$.
\end{proof}

\section{The second approach: tensorization and lifting reversed} \label{sec.OrderReversed}
This section describes an approach where the order of tensorization and the lifting to the product group is reversed. Define
\begin{align}\label{pi_omega}
& \pi_{\omega}:\mathbb{H}^1_{\omega_{0}} \times \cdots \times \mathbb{H}^1_{\omega_{0}} \rightarrow \mathbb{H}^{n}_{\omega}, \notag
\\
& \pi_{\omega}(g_1,\cdots,g_n):=\pi_{\omega}(x_{1}, y_{1},z_{1},\cdots,x_{n},y_{n},z_{n})=(x_{1},y_{1},\cdots,x_{n},y_{n}, z),
\\
&
z=\sum_{i=1}^{n} \alpha_iz_i \notag
\end{align}
for any $(g_1,\cdots,g_n)\in \mathbb{H}^1_{\omega_{0}} \times \cdots \times \mathbb{H}^1_{\omega_{0}}$, where $\alpha_i$ are given by Equation~\ref{e.SymplForm} for $i=1,\cdots,n$. Note that in this approach the lifting depends on the symplectic form $\omega$. The next statement shows that we can still view $\pi_{\omega}$ as an homomorphism between sub-Riemannian manifolds.

\begin{proposition}[$\mathbb{H}^{n}_{\omega}$ and $\mathbb{H}^1_{\omega_0} \times \cdots \times \mathbb{H}^1_{\omega_0}$] \label{p.QuotientLemma(2)}
The map $\pi_{\omega}$ is a Lie group homomorphism such that for any $(g_1,\cdots,g_n)\in\mathbb{H}^1_{\omega_{0}} \times \cdots \times \mathbb{H}^1_{\omega_{0}}$, it commutes with the left-translation $L_{(g_1,\cdots,g_n)}$, i.e.
\begin{align} \label{pi2}
\pi_{\omega}\circ L_{(g_1,\cdots,g_n)}=L_{\pi_{\omega}(g_1,\cdots,g_n)} \circ \pi_{\omega},
\end{align}
and the differential of $\pi_{\omega}$ at $(g_1,\cdots,g_n)$ restricted to horizontal spaces, $d\left(\pi_{\omega}\right)_{(g_1,\cdots,g_n)}\vert_{\mathcal{H}_{(g_1,\cdots,g_n)}}: \mathcal{H}_{(g_1,\cdots,g_n)} \rightarrow \mathcal{H}^{\omega}_{\pi_{\omega}(g_1,\cdots,g_n)}$ is an isometry. Moreover, for any $f\in C^{\infty}_c\left(\mathbb{H}^{n}_{\omega}\right)$
\begin{align} \label{eqn.GradientLengthRelation(3)}
\vert \nabla_{\mathcal{H}}(f\circ \pi_{\omega})\vert_{\mathcal{H}}=\vert \nabla^{\omega}_{\mathcal{H}}f\vert_{\mathcal{H}^{\omega}} \circ \pi_{\omega}.
\end{align}
In addition, the pushforward of the heat kernel measure $\mu_t$ by $\pi_{\omega}$ is the heat kernel measure $\mu_t^{\omega}$ on $\mathbb{H}^n_{\omega}$.
\end{proposition}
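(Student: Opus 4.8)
The plan is to transcribe the proof of Proposition~\ref{p.QuotientLemma} essentially verbatim; the only genuine change is that the weights $\alpha_i$ now enter through the map $\pi_\omega$ rather than through the factors of the product group. First I would check that $\pi_\omega$ is a group homomorphism directly from the group law \eqref{GroupLaw}: multiplying $(g_1,\dots,g_n)$ and $(g_1',\dots,g_n')$ componentwise in $\mathbb{H}^1_{\omega_0}\times\cdots\times\mathbb{H}^1_{\omega_0}$ (each factor isotropic), the $z$-coordinate of $\pi_\omega$ of the product is $\sum_{i=1}^n\alpha_i\big(z_i+z_i'+\tfrac12(x_iy_i'-x_i'y_i)\big)$, and since $\omega(\mathbf v,\mathbf v')=\sum_{i=1}^n\alpha_i(x_iy_i'-x_i'y_i)$ by \eqref{e.SymplForm}, this equals the $z$-coordinate of $\pi_\omega(g_1,\dots,g_n)\star\pi_\omega(g_1',\dots,g_n')$. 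Specializing one argument then gives \eqref{pi2}.

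Next I would record the differential. Since $\pi_\omega$ is linear near the identity, $d(\pi_\omega)_{(g_1,\dots,g_n)}$ sends $\partial_{x_i}\mapsto\partial_{x_i}$, $\partial_{y_i}\mapsto\partial_{y_i}$ and $\partial_{z_i}\mapsto\alpha_i\partial_z$. Hence, writing $\{X^{\omega_0}_i=\partial_{x_i}-\tfrac12 y_i\partial_{z_i},\ Y^{\omega_0}_i=\partial_{y_i}+\tfrac12 x_i\partial_{z_i}:i=1,\dots,n\}$ for the orthonormal frame of the product of isotropic groups,
\[
(d\pi_\omega)\big(X^{\omega_0}_i\big)=X^{\omega}_i\circ\pi_\omega,\qquad (d\pi_\omega)\big(Y^{\omega_0}_i\big)=Y^{\omega}_i\circ\pi_\omega,
\]
which are exactly the canonical vector fields \eqref{e.CanonicalBasis} on $\mathbb{H}^n_\omega$; this is the one place where the weighting $z=\sum_i\alpha_i z_i$ is used, and the only substantive difference from Proposition~\ref{p.QuotientLemma}. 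At the identity, $\mathcal H_{(e_1,\dots,e_n)}$ and $\mathcal H^\omega_e$ are both $\mathbb R^{2n}$ with the same Euclidean inner product and $d\pi_\omega$ restricts there to the identity, hence to an isometry; combining \eqref{pi2} with left-invariance of the two sub-Riemannian metrics propagates the isometry to every horizontal fiber, exactly as in Proposition~\ref{p.QuotientLemma}. One may alternatively note that $d(\pi_\omega)_{(e_1,\dots,e_n)}(\mathbf a_1,c_1,\dots,\mathbf a_n,c_n)=(\mathbf a_1,\dots,\mathbf a_n,\sum_i\alpha_ic_i)$ is a Lie algebra homomorphism by \eqref{e.LieBracket}, re-deriving the homomorphism property via the simply-connected criterion used for Proposition~\ref{p.QuotientLemma}.

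The last two assertions are then formal consequences. Expanding $\nabla^\omega_{\mathcal H}f$ in the frame $\{X^\omega_i,Y^\omega_i\}$ and using that $d\pi_\omega$ carries $\{X^{\omega_0}_i,Y^{\omega_0}_i\}$ isometrically onto it yields the gradient identity \eqref{eqn.GradientLengthRelation(3)}, word for word as in the derivation of \eqref{eqn.GradientLengthRelation(2)}. For the heat kernel measure, the same frame relations give the intertwining $\sum_{i=1}^n\big((X^{\omega_0}_i)^2+(Y^{\omega_0}_i)^2\big)(f\circ\pi_\omega)=(\Delta^\omega_{\mathcal H}f)\circ\pi_\omega$ for every $f\in C^\infty_c(\mathbb{H}^n_\omega)$, the left-hand operator being the sub-Laplacian of $\mathbb{H}^1_{\omega_0}\times\cdots\times\mathbb{H}^1_{\omega_0}$; substituting $f\circ\pi_\omega$ into the characterization of $\mu_t$ from Proposition~\ref{prop.HeatKernelMeasureDF2} and changing variables $\pi_\omega(g_1,\dots,g_n)\mapsto g$ shows that $(\pi_\omega)_\#\mu_t$ solves the heat equation \eqref{eqn.HeatEquation} on $\mathbb{H}^n_\omega$ with the correct initial data, so by the uniqueness part of Proposition~\ref{prop.HeatKernelMeasureDF2} it coincides with $\mu^\omega_t$.

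I do not anticipate a real obstacle: the whole argument is a transcription of Proposition~\ref{p.QuotientLemma} with the $\alpha_i$'s relocated from the group factors into the map. The single point requiring care — and the only genuine difference between the two approaches — is verifying that the weighting $z=\sum_i\alpha_iz_i$ is chosen precisely so that the isotropic orthonormal frame pushes forward to the non-isotropic frame \eqref{e.CanonicalBasis}; once that holds, the horizontal isometry, the gradient identity \eqref{eqn.GradientLengthRelation(3)}, and the pushforward statement all follow mechanically.
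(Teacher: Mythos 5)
Your proposal is correct and follows essentially the same route as the paper, which likewise records the differential $d(\pi_{\omega})_{(e_1,\cdots,e_n)}(\mathbf{a}_1,c_1,\cdots,\mathbf{a}_n,c_n)=(\mathbf{a}_1,\cdots,\mathbf{a}_n,\sum_{i=1}^n\alpha_ic_i)$ and then refers to the proof of Proposition~\ref{p.QuotientLemma} for the remaining steps. You correctly identify the one substantive point — that the weighting $z=\sum_i\alpha_iz_i$ is exactly what makes the isotropic frame push forward onto the frame \eqref{e.CanonicalBasis} — and in fact supply more detail than the paper's own proof does.
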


\begin{proof}
In this case, the explicit formula for the differential of $\pi_{\omega}$ at $(e_1,\cdots,e_n)$,  $d(\pi_{\omega})_{(e_1,\cdots,e_n)}: T_{(e_1,\cdots,e_n)} \left(\mathbb{H}^1_{\omega_0} \times \cdots \times \mathbb{H}^1_{\omega_0}\right) \rightarrow T_{\pi_{\omega}(e_1,\cdots,e_n)}\mathbb{H}^n_{\omega}$ is
\begin{align*}
& \left(d\left(\pi_{\omega}\right)_{(e_1,\cdots,e_n)}\right)\left(\mathbf{a}_1,c_1,\cdots,\mathbf{a}_n,c_n\right)=\left(\mathbf{a}_1,\cdots,\mathbf{a}_n,\sum_{i=1}^n\alpha_ic_i\right),
\\
&
\left(\mathbf{a}_1,c_1,\cdots,\mathbf{a}_n,c_n\right)\in T_{(e_1,\cdots,e_n)} \left(\mathbb{H}^1_{\omega_0} \times \cdots \times \mathbb{H}^1_{\omega_0}\right) \cong T_{e_1}\mathbb{H}^1_{\omega_0}\oplus\cdots\oplus T_{e_n}\mathbb{H}^1_{\omega_0}.
\end{align*}
The rest of the proof is similar to the proof of Proposition~\ref{p.QuotientLemma}.
\end{proof}
Using the lifting $\pi_{\omega}$, we can also prove Theorem~\ref{thm.LSINonisotropic} as follows.

\begin{proof}[Second proof of Theorem~\ref{thm.LSINonisotropic}]
First we can apply Proposition~\ref{prop.LSITensorization} to the group $\mathbb{H}^1_{\omega_{0}} \times \cdots \times \mathbb{H}^1_{\omega_{0}}$ to see that it satisfies a logarithmic Sobolev inequality $\operatorname{LSI}\left( C\left(\omega_0, n, t\right),
\mu_t^{\omega_{0}}\otimes \cdots \otimes \mu_t^{\omega_{0}}\right)$,
where the logarithmic Sobolev constant can be chosen to be $C\left(\omega_0, n, t\right)=C\left(\omega_0\right)t$. For any $f\in C^{\infty}_c\left(\mathbb{H}^{n}_{\omega}\right)$, we have $f\circ \pi_{\omega}\in C^{\infty}_c\left(\mathbb{H}^1_{\omega_{0}} \times \cdots \times \mathbb{H}^1_{\omega_{0}}\right)$ where $\pi_{\omega}$ is defined by \eqref{pi_omega}. As in the first proof of Theorem~\ref{thm.LSINonisotropic}, we can  use the change of variables $\pi_{\omega}\left(g_1,\cdots,g_n\right)\mapsto g$ in the logarithmic Sobolev inequality on $\mathbb{H}^1_{\omega_{0}} \times \cdots \times \mathbb{H}^1_{\omega_{0}}$ for $f\circ \pi_{\omega}$, and together with Proposition~\ref{p.QuotientLemma(2)} we get the same result.
\end{proof}

\section{Logarithmic Sobolev inequalities on infinite-dimensional Heisenberg groups} \label{sec.LSIInfinite}

In this section, we consider an application of the results on non-isotropic Heisenberg groups to infinite-dimensional Heisenberg groups with a one-dimensional center. We aim to prove the logarithmic Sobolev inequality on such an infinite-dimensional Heisenberg group by the finite-dimensional projection approximation approach used in \cite{DriverGordina2008, BaudoinGordinaMelcher2013}. That is, we will approximate the logarithmic Sobolev inequality on the infinite-dimensional Heisenberg group by logarithmic Sobolev inequalities on finite-dimensional projection groups which are non-isotropic Heisenberg groups discussed in previous sections. The crucial ingredient here is that we proved previously that the LSI constant can be chosen to be independent of the dimension of finite-dimensional projection groups. 

We start by reviewing the definitions for infinite-dimensional
Heisenberg-like groups, which are infinite-dimensional Lie groups modelled on an
abstract Wiener space, and collect some properties of the finite-dimensional projection approximation. We may omit some details, but much of the material in this section also appears in \cite{DriverGordina2008, GordinaMelcher2013}, and subsequently in \cite{BaudoinGordinaMelcher2013, DriverEldredgeMelcher2016, Gordina2017}.

\subsection{Abstract Wiener spaces}

\label{s.wiener}
We start by summarizing several well-known properties of Gaussian measures and abstract Wiener spaces that are needed later.  These results as well as more details on abstract Wiener spaces may be found in \cite{BogachevGaussianMeasures, KuoLNM1975}.

Suppose that $W$ is a real separable Banach space and $\mathcal{B}_{W}$ is
the Borel $\sigma$-algebra on $W$.

\begin{definition}
\label{d.2.1}
A measure $\mu$ on $(W,\mathcal{B}_{W})$ is called a (mean zero,
non-degenerate) {\it Gaussian measure} provided that its characteristic
functional is given by
\begin{equation}
\label{e.gauss}
\hat{\mu}(u) := \int_W e^{iu(x)} d\mu(x)
	= e^{-\frac{1}{2}q(u,u)}, \qquad \text{ for all } u \in W^*,
\end{equation}
for $q=q_\mu:W^*\times W^*\rightarrow\mathbb{R}$ a symmetric, positive definite quadratic form.
That is, $q$ is a real inner product on $W^*$.
\end{definition}

A proof of the following standard theorem may be found for example in  \cite[Appendix
A]{DriverGordina2008} and \cite[Lemma 3.2]{BaudoinGordinaMelcher2013}.

\begin{theorem}
\label{t.2.3}
Let $\mu$ be a Gaussian measure on a real separable Banach space $W$.
For $p\in[1,\infty)$, let
\begin{equation}
\label{e.2.2}
C_p :=\int_W \|w\| _{W}^{p} \,d\mu(w).
\end{equation}
For $w\in W$, let
\[
\|w\|_H := \sup\limits_{u\in W^*\setminus\{0\}}\frac{|u(w)|}{\sqrt{q(u,u)}}
\]
and define the {\em Cameron-Martin subspace} $H\subset W$ by
\[ H := \{h\in W : \|h\|_H < \infty\}. \]
Then
\begin{enumerate}
\item \label{i.1}
For all  $p\in[1,\infty)$, $C_p<\infty$.

\item $H$ is a dense subspace of $W$.

\item There exists a unique inner product $\langle\cdot,\cdot\rangle_H$
on $H$ such that $\|h\|_H^2 = \langle h,h\rangle_H$ for all $h\in H$, and
$H$ is a separable Hilbert space with respect to this inner product.

\item \label{i.3}
For any $h\in H$,
$\|h\|_W \le \sqrt{C_2} \|h\|_H$.

\item \label{i.5}
If $\{e_j\}_{j=1}^\infty$ is an orthonormal basis for $H$, then for any
$u,v\in H^*$
\[ q(u,v) = \langle u,v\rangle_{H^*} = \sum_{j=1}^\infty u(e_j)v(e_j).
\]
\item \label{l.q}
If $u,v\in W^*$, then
\[
\int_W u(w)v(w)\,d\mu(w) = q(u,v).
 \]
\end{enumerate}
\end{theorem}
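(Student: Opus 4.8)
The plan is to prove the six assertions in the order (1), (6), (4), (3), (2), (5), since each later item leans on earlier ones; the only genuinely analytic inputs are Fernique's theorem and the characteristic-functional identity \eqref{e.gauss}, and everything else is Hilbert-space bookkeeping built on top of them.

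First I would deduce (1) from Fernique's theorem: for a Gaussian measure on a separable Banach space there is $\varepsilon>0$ with $\int_W e^{\varepsilon\|w\|_W^2}\,d\mu(w)<\infty$, and since $t^p\le C_{p,\varepsilon}e^{\varepsilon t^2}$ this forces $C_p<\infty$ for all $p\in[1,\infty)$. For (6), replacing $u$ by $tu$ in \eqref{e.gauss} shows $\int_W e^{itu(w)}\,d\mu(w)=e^{-t^2q(u,u)/2}$, i.e. $w\mapsto u(w)$ is a centered Gaussian of variance $q(u,u)$; in particular $u(\cdot)\in L^2(\mu)$ with $\int_W u(w)^2\,d\mu(w)=q(u,u)$, and applying this to $u+v$ and polarizing gives $\int_W u(w)v(w)\,d\mu(w)=q(u,v)$. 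Item (4) then follows at once: the definition of $\|\cdot\|_H$ gives $|u(h)|\le\|h\|_H\sqrt{q(u,u)}$ for every $u\in W^*$, while $q(u,u)=\int_W u(w)^2\,d\mu(w)\le\|u\|_{W^*}^2 C_2$ by (6), so the Hahn--Banach identity $\|h\|_W=\sup_{\|u\|_{W^*}\le1}|u(h)|$ yields $\|h\|_W\le\sqrt{C_2}\,\|h\|_H$.

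For (3) I would realize $H$ concretely. Let $i\colon W^*\to L^2(\mu)$ be the inclusion $u\mapsto u(\cdot)$, let $H_\mu\subset L^2(\mu)$ be the closure of $i(W^*)$, and note that $H_\mu$ is separable because $W$ is a separable metric space. Since $C_2<\infty$, for $f\in H_\mu$ the Bochner integral $Rf:=\int_W w\,f(w)\,d\mu(w)$ converges in $W$, and for $u\in W^*$ one has $u(Rf)=\langle iu,f\rangle_{L^2(\mu)}$, hence $\|Rf\|_H\le\|f\|_{L^2(\mu)}$. Conversely, if $h\in W$ has $\|h\|_H<\infty$, then $u\mapsto u(h)$ is bounded on $i(W^*)$ for the $L^2(\mu)$-norm with bound $\|h\|_H$, so it extends to $H_\mu$ and Riesz produces $f\in H_\mu$ with $\langle iu,f\rangle_{L^2(\mu)}=u(h)$ for all $u\in W^*$ and $\|f\|_{L^2(\mu)}\le\|h\|_H$; since $W^*$ separates points of $W$ this forces $Rf=h$, and then $\|h\|_H=\|Rf\|_H\le\|f\|_{L^2(\mu)}\le\|h\|_H$. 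Thus $R$ is an isometry of $H_\mu$ onto $H$; transporting the $L^2(\mu)$ inner product through $R$ gives the inner product on $H$, which is the unique one compatible with $\|\cdot\|_H$ by polarization, and separability of $H$ follows from that of $H_\mu$. With this picture, (2) follows from non-degeneracy of $q$: if $H$ were not dense, Hahn--Banach would give $u_0\in W^*\setminus\{0\}$ vanishing on $H$, so evaluating $u_0$ on $R(iu_0)\in H$ would give $0=u_0(R(iu_0))=\langle iu_0,iu_0\rangle_{L^2(\mu)}=q(u_0,u_0)$, contradicting positive-definiteness. For (5), identify $H^*\cong H$ by Riesz; writing $\tilde u\in H$ for the representative of $u$, Parseval gives $\langle u,v\rangle_{H^*}=\langle\tilde u,\tilde v\rangle_H=\sum_j\langle\tilde u,e_j\rangle_H\langle\tilde v,e_j\rangle_H=\sum_j u(e_j)v(e_j)$, and when $u,v\in W^*$ the identity $\tilde u=R(iu)$ together with (6) gives $\langle u,v\rangle_{H^*}=\langle iu,iv\rangle_{L^2(\mu)}=q(u,v)$ (the general claim for $u,v\in H^*$ then follows since the image of $W^*$ is dense in $H\cong H^*$).

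I expect the main obstacle to be the part of (3) identifying the abstractly defined space $\{h\in W:\|h\|_H<\infty\}$ with the image $R(H_\mu)$ — that is, that every vector of finite $H$-norm is represented as $\int_W w\,f(w)\,d\mu(w)$ for some $f$ in the $L^2(\mu)$-closure of $W^*$, with matching norms. The argument above reduces this to a Riesz-representation step plus the fact that $W^*$ separates points of $W$, but one must be careful that the Bochner integral defining $R$ makes sense, which is exactly where $C_2<\infty$ from step (1) is used. The only other non-elementary ingredient is Fernique's theorem in step (1), which I would simply cite.
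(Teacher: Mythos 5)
Your proof is correct. Note that the paper does not prove Theorem~\ref{t.2.3} at all --- it is stated as standard and deferred to \cite[Appendix A]{DriverGordina2008} and \cite[Lemma 3.2]{BaudoinGordinaMelcher2013} --- and your argument (Fernique's theorem for the moments, the characteristic functional plus polarization for $\int_W uv\,d\mu=q(u,v)$, and the realization of $H$ as the isometric image, under the Bochner-integral map $f\mapsto\int_W w f(w)\,d\mu(w)$, of the $L^2(\mu)$-closure of $W^*$) is precisely the standard construction found in those references, so there is no divergence of approach to report.
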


It follows from \eqref{i.3} that any $u\in W^*$ restricted to $H$ is in
$H^*$.  Therefore, by \eqref{i.5} and  \eqref{l.q}
\begin{equation}\label{e.bl}
\int_W u^2(w)\,d\mu(w) = q(u,u) = \|u\|_{H^*}^2 = \sum_{j=1}^\infty |u(e_j)|^2.
\end{equation}

\subsection{Infinite-dimensional Heisenberg-like groups}\label{s.3}

We revisit the definition of the infinite-dimensional Heisenberg-like groups that were first considered in \cite{DriverGordina2008}. Note that since we are interested in hypoelliptic heat kernel measures on these groups, we consider the topology described previously in \cite{GordinaMelcher2013, BaudoinGordinaMelcher2013, DriverEldredgeMelcher2016}.  First we set the following notation which will be used for the rest of the paper. Note that we consider only the case of the one-dimensional center.

\begin{notation}
Let $(W,H,\mu)$ be a real abstract Wiener space.   Suppose $\omega: W \times W \rightarrow \mathbb{R}$ is a continuous skew-symmetric bilinear form on $W$.
\end{notation}

\begin{remark}
As stated in \cite[Proposition 3.14]{DriverGordina2008} it is suprising to see that the continuity of the symplectic form $\omega$ implies that
\begin{align*}
\left\| \omega\right\| _{2}^{2}
	:= \left\| \omega\right\| _{H^{\ast}\otimes H^{\ast}\otimes\mathbb{R}}^2
	:= \sum_{i,j=1}^{\infty}\left\vert \omega\left(  e_{i},e_{j}\right)  \right\vert^{2}< \infty,
\end{align*}
where $\{e_i\}_{i=1}^{\infty}$ is an orthonormal basis for $H$, and thus the Hilbert-Schmidt norm of $\omega$ is finite.
\end{remark}

\begin{definition} \label{df.InfiniteDimensionalHeisenbergGroup}
Let $\mathfrak{g}$ denote $W\times\mathbb{R}$ when thought of as a Lie algebra
with the Lie bracket given by
\begin{equation}
\label{e.6.4}
[(X_1,V_1), (X_2,V_2)] := (0, \omega(X_1,X_2)).
\end{equation}
Let $G$ denote $W\times\mathbb{R}$ when thought of as a group with
multiplication given by
\begin{equation*}
 g_1 g_2 := g_1 + g_2 + \frac{1}{2}[g_1,g_2],
\end{equation*}
where $g_1$ and $g_2$ are viewed as elements of $\mathfrak{g}$. For $g_i=(w_i,c_i)$, this may be written equivalently as
\begin{equation}
\label{e.3.2}
(w_1,c_1)\cdot(w_2,c_2) = \left( w_1 + w_2, c_1 + c_2 +
    \frac{1}{2}\omega(w_1,w_2)\right).
\end{equation}
We will call $G$ constructed in this way an infinite-dimensional \emph{Heisenberg group}.
\end{definition}
It is easy to verify that, given this bracket and multiplication,
$\mathfrak{g}$ is indeed a Lie algebra and $G$ is a group.
Note that $g^{-1}=-g$ and the identity $e=(0,0)$.

\begin{notation}
Let $\mathfrak{g}_{CM}$ denote $H\times\mathbb{R}$ when thought of as a Lie
subalgebra of $\mathfrak{g}$, and we will refer to $\mathfrak{g}_{CM}$ as the
{\em Cameron-Martin subalgebra} of $\mathfrak{g}$. Similarly, let $G_{CM}$
denote $H\times\mathbb{R}$ when thought of as a subgroup of $G$, and we will
refer to $G_{CM}$ as the {\em Cameron-Martin subgroup} of $G$.
\end{notation}
We will equip $\mathfrak{g}=G$ with the homogeneous norm
\[ \|(w,c)\|_{\mathfrak{g}} := \sqrt{\|w\|_W^2 + \vert c\vert}, \]
and analogously on $\mathfrak{g}_{CM}=G_{CM}$ we define
\[ \|(A,a)\|_{\mathfrak{g}_{CM}} := \sqrt{\|A\|_H^2 + \vert a\vert}. \]

One may easily see that $G$ and $G_{CM}$ are topological groups with respect
to the topologies induced by the homogeneous norms, see for example \cite[Lemma 2.9]{GordinaMelcher2013}.

Before proceeding, we describe the basic examples for the construction of
these infinite-dimensional Heisenberg groups.

\begin{example} [Finite-dimensional non-isotropic Heisenberg group]
\label{ex.Heis}
Let $W=H\cong\mathbb{R}^{2n}$. Suppose $\omega$ is a symplectic form on $\mathbb{R}^{2n}$. Then $G=\mathbb{R}^{2n}\times\mathbb{R}$ equipped with the group operation defined by \eqref{e.3.2} is a non-isotropic
Heisenberg group with the group law defined by \eqref{GroupLaw}.
\end{example}

\begin{example}[Heisenberg group of a symplectic vector space]
\label{ex.infHeis}
Let $(K,\langle\cdot,\cdot\rangle)$ be a Hilbert space and $Q$ be a
strictly positive trace class operator on $K$.  For $h, k\in K$, let $\langle
h, k\rangle_Q:= \langle h, Qk \rangle$ and $\|h\|_Q:= \sqrt{\langle
h, h\rangle_Q}$, and let $(K_Q,\langle\cdot, \cdot\rangle_Q)$ denote the Hilbert
space completion of $(K,\|\cdot\|_Q)$.
Then $W=(K_Q)_{\operatorname{Re}}$ and $H=K_{\operatorname{Re}}$ determines an abstract Wiener
space (see, for example,  of \cite[Exercise 17 on p.59]{KuoLNM1975}).  Letting
\[
\omega( w,z ) := \operatorname{Im}\langle w, z \rangle_Q,
\]
then $G=(K_Q)_{\operatorname{Re}}\times\mathbb{R}$ equipped with a group operation as
defined by \eqref{e.3.2} is an infinite-dimensional Heisenberg-like group.
\end{example}

\subsection{Finite-dimensional projection groups}\label{s.gpproj}
The finite-dimensional projections of $G$ defined in this section will be important in the sequel.  The construction of these projections is quite natural as they come from the usual projections of the abstract Wiener space; however, the projections defined here are not group homomorphisms.

As before, let $(W,H,\mu)$ denote an abstract Wiener space.
Let $i:H\rightarrow W$ be the inclusion map, and $i^*:W^*\rightarrow H^*$ be
its transpose so that $i^*\ell:=\ell\circ i$ for all $\ell\in W^*$.  Also,
let
\[ H_* := \{h\in H: \langle\cdot,h\rangle_H\in \mathrm{Range}(i^*)\subset H^*\}.
\]
That is, for $h\in H$, $h\in H_*$ if and only if $\langle\cdot,h\rangle_H\in
H^*$ extends to a continuous linear functional on $W$, which we will continue
to denote by $\langle\cdot,h\rangle_H$.  Because $H$ is a dense subspace of
$W$, $i^*$ is injective and thus has a dense range.  Since
$H\ni h\mapsto\langle\cdot,h\rangle_H\in H^*$ is a
linear isometric isomorphism, it follows that $H_*\ni
h\mapsto\langle\cdot,h\rangle_H\in W^*$ is a linear isomorphism
also, and so $H_*$ is a dense subspace of $H$.

Suppose that $P:H\rightarrow H$ is a finite rank orthogonal projection
such that $PH\subset H_*$.  Let $\{ e_j\}_{j=1}^n$ be an orthonormal basis for
$PH$.  Then we may extend $P$ to a (unique) continuous operator
from $W\rightarrow H$ (still denoted by $P$) by letting
\begin{equation}
\label{e.proj}
Pw := \sum_{j=1}^n \langle w, e_j\rangle_H  e_j
\end{equation}
for all $w\in W$.

\begin{notation}
\label{n.proj}
Let $\mathrm{Proj}(W)$ denote the collection of finite rank projections
on $W$ such that
\begin{enumerate}
\item $PW\subset H_*$,
\item $P|_H:H\rightarrow H$ is an orthogonal projection (that is, $P$ has the form given in equation
\eqref{e.proj}), and
\item $PW$ is sufficiently large to satisfy
H\"ormander's condition (that is, $\{\omega(A,B):A,B\in PW\}=\mathbb{R}$).
\end{enumerate}
For each $P\in\mathrm{Proj}(W)$, we define $G_P:=
PW\times\mathbb{R}\subset H_*\times\mathbb{R}$ and a corresponding
projection $\pi_P:G\rightarrow G_P$ \[ \pi_P(w,x):= (Pw,x). \] We
will also let $\mathfrak{g}_P=\mathrm{Lie}(G_P) = PW\times\mathbb{R}$.
\end{notation}

Note that for each $P\in\mathrm{Proj}(W)$, $G_P$
is a finite-dimensional connected unimodular Lie group, and  $\mathfrak{g}_P$ is step
2 stratified Lie algebra with  $\mathcal{H}=PH$ and $\mathcal{V}=\mathbb{R}$. Moreover, when $\omega$ is restricted to $PW\times PW$, we see that $\omega\vert_{PW\times PW}:PW\times PW \rightarrow \mathbb{R}$ is a symplectic form from the non-degeneracy and the skew-symmetry of $\omega$. By Theorem~\ref{t.SymplBasis} we have $\operatorname{dim}PW$ is even. Together with Proposition~\ref{p.SymplBasis} and \eqref{e.3.2}, we see that for any each $P\in\mathrm{Proj}(W)$, $G_P$ is a non-isotropic Heisenberg group equipped with the group law given by
\begin{align*}
(v_1,c_1)\cdot(v_2,c_2) & = \left( v_1 + v_2, c_1 + c_2 +\frac{1}{2}\omega(v_1,v_2)\right)
\\
&
=\left( v_1 + v_2, c_1 + c_2 +\frac{1}{2}\omega\vert_{PW\times PW}(v_1,v_2)\right) \end{align*}
for any $(v_i,c_i)=(Pw_i,c_i)\in PW\times \mathbb{R}$, which is consistent with \ref{GroupLaw}.

\subsection{Subelliptic Laplacian and the heat kernel measure on $G$}

\subsubsection{Subelliptic Laplacian and horizontal gradient on $G$}
\label{s.deriv}

In this section, we give the definition of the subelliptic Laplacian and the horizontal gradient on $G$ analogously to how it is done in the non-isotropic case. To begin with, we recall some definitions of derivatives on $G$. For more details, we refer to \cite[p. 8-10]{DriverGordina2008} and \cite[Section 3.4]{BaudoinGordinaMelcher2013}.

For $x\in G$ we denote by $L_x:G\rightarrow G$ the left multiplication by $x$.  As $G$
is a vector space, to each $x\in G$ we can associate the tangent
space $T_x G$ to $G$ at $x$, which is naturally isomorphic to $G$.

\begin{notation}[Linear and group derivatives]
\label{n.3.5}
Let $f:G\rightarrow\mathbb{C}$ denote a Fr\'{e}chet smooth function for $G$
considered as a Banach space with respect to the norm
\[
|(w,c)|_G := \sqrt{\|w\|_W^2+\vert c\vert^2}.
\]
Then, for $x\in G$, and $h,k\in\mathfrak{g}$, let
\[ f'(x)h := \partial_h f(x) = \frac{d}{dt}\bigg|_0f(x+th) \]
and
\[
f''(x)  \left(  h\otimes k\right)  :=\partial
_{h}\partial_{k}f(x).
\]
For $v,x\in G$, let $v_x \in T_x G$ denote the tangent vector
satisfying $v_xf=f'(x)v$.  If $x(t)$ is any smooth curve in
$G$ such that $x(0) = x$ and $\dot{x}(0)=v$ (for example,
$x(t) = x+tv$), then
\[ L_{g*} v_x = \frac{d}{dt}\bigg|_0 g\cdot x(t). \]
In particular, for $x=e$ and $v_e=h\in\mathfrak{g}$, again we let
$\tilde{h}(g):=L_{g*}h$, so that $\tilde{h}$ is the unique left invariant
vector field on $G$ such that $\tilde{h}(e)=h$.  As usual we view
$\tilde{h}$ as a first order differential operator acting on smooth
functions by
\[ (\tilde{h}f)(x) = \frac{d}{dt}\bigg|_0 f(x\cdot \sigma(t)), \]
where $\sigma(t)$ is a smooth curve in $G$ such that $\sigma(0)=e$ and
$\dot{\sigma}(0)=h$ (for example, $\sigma(t)=th$).
\end{notation}

The explicit formula to compute $\tilde{h}f$ is given in \cite[Proposition 3.7]{DriverGordina2008}. Moreover, \cite[Proposition 3.7]{DriverGordina2008} shows that the Lie algebra structure on $\mathfrak{g}$ induced by the Lie
algebra structure on the left invariant vector fields on $G$ is the same as the Lie algebra structure defined by \eqref{e.6.4}, which is consistent with the finite-dimensional setting.

Now we recall the definition of some special class of functions that are used often in this setting.

\begin{definition}
\label{d.cyl}
A function $f:G\rightarrow\mathbb{C}$ is a {\em (smooth) cylinder function}
if it may be written as $f=F\circ\pi_P$, for some $P\in\mathrm{Proj}(W)$
and (smooth) function $F:G_P\rightarrow\mathbb{C}$. A \emph{cylinder polynomial} is a cylinder function, $f=F\circ \pi_{P}:G \rightarrow \mathbb{C}$, where $P\in\operatorname*{Proj}(W)$ and $F$ is a real or complex polynomial function on $G_P$.
\end{definition}

We consider the second-order differential operator below as an analogue of the sub-Laplacian in the finite-dimensional setting.

\begin{definition}
Let $\left\{  e_{j}\right\}_{j=1}^{\infty}$ be an orthonormal
basis for $H$. For any smooth cylinder function $f:G\rightarrow
\mathbb{R}$, we define the \emph{subelliptic Laplacian} as
\begin{align} \label{SubellipticLaplacian}
Lf(x)  :=\sum_{j=1}^{\infty}\left[  \widetilde{\left(
e_{j},0\right)  }^{2}f\right]  (x).
\end{align}
\end{definition}

By \cite[Proposition 3.17]{BaudoinGordinaMelcher2013}, \eqref{SubellipticLaplacian} is well-defined and independent of the choice of basis.

\begin{definition}
For any cylinder polynomial $u$, define the \emph{horizontal gradient} $\operatorname{grad}_H:G \rightarrow H$ of $u$ by
\begin{align} \label{eqn.HorizaontalGradientInfinite}
\langle \operatorname{grad}_Hu,h\rangle_H=\widetilde{\left(h,0\right)}u
\end{align}
for any $h\in H$.
\end{definition}

Let $\left\{  e_{j}\right\}_{j=1}^{\infty}$ be an orthonormal
basis for $H$. Then we have
\begin{align*}
\operatorname{grad}_Hu=\sum_{j=1}^{\infty}\left(  \widetilde{\left(
		e_{j},0\right)  }u\right) \left(  x\right).
\end{align*}

For the finite-dimensional groups $G_P$ we may define the same
operators $L_Pf$ and $\operatorname{grad}_H^Pf$ for $f\in C^\infty(G_P)$.  In particular, if $\{e_i\}_{i=1}^n$ is an orthonormal basis of $PH$, then
\[ L_Pf = \sum_{j=1}^n \widetilde{(e_j,0)}^2f
	\quad\text{ and }\quad
\operatorname{grad}_H^Pf = \sum_{j=1}^n
	\widetilde{(e_j,0)}f \]
which are consistent with \eqref{df.SubLaplacian} and Definition~\ref{df.HorizontalGradient}.

\subsubsection{Distances on $G_{CM}$}
\label{s.length}

The sub-Riemannian distance on $G_{CM}$ can be defined similarly to how it is done in finite dimensions.  We recall its definition and relevant properties, including the fact that the topology induced by this metric is equivalent to the topology induced by $\|\cdot\|_{\mathfrak{g}_{CM}}$. We do not use these facts, but we include them for completeness.

\begin{notation}[Horizontal distance on $G_{CM}$]
\label{n.length}

\begin{enumerate}

\item For $x=(A,a)\in G_{CM}$, let
\[
|x|_{\mathfrak{g}_{CM}}^2 : = \|A\|_H^2 + \vert a\vert^2.
\]
The \emph{length} of a $C^1$-path $\sigma:[a,b]\rightarrow
G_{CM}$ is defined as
\[
\ell(\sigma)
	:= \int_a^b |L_{\sigma^{-1}(s)*}\dot{\sigma}(s)|_{\mathfrak{g}_{CM}} \,ds.
\]

\item \label{i.2}
A $C^1$-path $\sigma:[a,b]\rightarrow G_{CM}$ is {\em horizontal} if
$L_{\sigma(t)^{-1}*}\dot{\sigma}(t)\in H\times\{0\}$
for a.e.~$t$.  Let $C^{1,h}_{CM}$ denote the set of horizontal paths
$\sigma:[0,1]\rightarrow G_{CM}$.

\item The {\em horizontal distance} between $x,y\in G_{CM}$ is defined by
\[ d(x,y) := \inf\{\ell(\sigma): \sigma\in C^{1,h}_{CM} \text{ such
    that } \sigma(0)=x \text{ and } \sigma(1)=y \}. \]
\end{enumerate}
The horizontal distance is defined analogously on $G_P$ and will be denoted by
$d_P$.
\end{notation}

\begin{remark}
\label{r.horiz}
Note that if $\sigma(t)=(A(t),a(t))$ is a horizontal path, then
\[ L_{\sigma(t)^{-1}*}\dot{\sigma}(t)
	= \left(\dot{A}(t), \dot{a}(t) - \frac{1}{2}\omega(A(t),\dot{A}(t))\right)
		\in H\times\{0\}
\]
implies that $\sigma$ must satisfy
\[ a(t) = a(0) + \frac{1}{2}\int_0^t \omega(A(s),\dot{A}(s))\,ds, \]
and the length of $\sigma$ is given by
\begin{align*}
\ell(\sigma)
	= \int_0^1 |L_{\sigma^{-1}(s)*}\dot{\sigma}(s)|_{\mathfrak{g}_{CM}}\,ds
	= \int_0^1 \|\dot{A}(s)\|_H\,ds .
\end{align*}
\end{remark}

The following statement is \cite[Proposition 2.17, Proposition 2.18]{GordinaMelcher2013}.  We
refer the reader to that paper for the proofs.
\begin{proposition}[Proposition 2.17 and Proposition 2.18 in \cite{GordinaMelcher2013}]
\label{p.length}
If the symplectic form $\omega$ is a surjective map onto $\mathbb{R}$,
then there exist finite constants $K_1=K_1(\omega)$ and $K_2=K_2(\omega)$ such that

\[
K_1(\|A\|_H+\sqrt{\vert a\vert}) \leqslant d(e,(A,a))
	\leqslant K_2(\|A\|_H+\sqrt{\vert a\vert}),
\]
for all $(A,a)\in\mathfrak{g}_{CM}$.  In particular, the topologies induced by $d$ and $\|\cdot\|_{\mathfrak{g}_{CM}}$
are equivalent.
\end{proposition}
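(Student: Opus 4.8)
The plan is to read off both inequalities directly from the variational definition of $d$ in Notation~\ref{n.length}, using the description of horizontal paths in Remark~\ref{r.horiz}: a horizontal path $\sigma(t)=(A(t),a(t))$ with $\sigma(0)=e$ satisfies $a(t)=\tfrac12\int_0^t\omega(A(s),\dot A(s))\,ds$ and $\ell(\sigma)=\int_0^1\|\dot A(s)\|_H\,ds$. Throughout I would use that $\omega$ restricted to $H\times H$ is bounded, with $|\omega(h,k)|\le\|\omega\|_2\,\|h\|_H\,\|k\|_H$, by the Hilbert--Schmidt bound in the remark following Definition~\ref{df.InfiniteDimensionalHeisenbergGroup}, and that surjectivity of $\omega$ onto $\mathbb{R}$ lets me fix once and for all $h_1,h_2\in H$ with $\omega(h_1,h_2)=1$; put $m=m(\omega):=\|h_1\|_H+\|h_2\|_H$.

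For the lower bound, given any horizontal $\sigma$ from $e$ to $(A,a)$ I would first note $\|A\|_H=\left\|\int_0^1\dot A(s)\,ds\right\|_H\le\ell(\sigma)$, and then, since $\|A(s)\|_H\le\int_0^s\|\dot A(u)\|_H\,du\le\ell(\sigma)$ for all $s$,
\[
|a|\le\tfrac12\int_0^1|\omega(A(s),\dot A(s))|\,ds\le\tfrac12\|\omega\|_2\Big(\sup_s\|A(s)\|_H\Big)\int_0^1\|\dot A(s)\|_H\,ds\le\tfrac12\|\omega\|_2\,\ell(\sigma)^2.
\]
Taking the infimum over $\sigma$ yields $\|A\|_H\le d(e,(A,a))$ and $\sqrt{|a|}\le\sqrt{\|\omega\|_2/2}\;d(e,(A,a))$, hence the stated inequality with $K_1=\big(1+\sqrt{\|\omega\|_2/2}\big)^{-1}$.

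For the upper bound I would exhibit an explicit concatenated path. First take the segment $A(t)=tA$, which is horizontal of length $\|A\|_H$ and, because $\omega(sA,A)=0$, ends at $(A,0)$. Then, based at $A$, traverse a smooth loop in $H$ lying in the plane spanned by $h_1,h_2$ that encloses $\omega$-area exactly $a$ (e.g. an ellipse with semi-axes proportional to $h_1,h_2$): the constant shift $A$ contributes nothing to the vertical increment since the loop is closed and $\omega(A,\cdot)$ is linear, while the Green-type identity $\tfrac12\oint\omega(\gamma,\dot\gamma)\,dt=(\text{signed }\omega\text{-area})$ shows the loop raises the vertical coordinate by exactly $a$ and leaves the $H$-component unchanged; its length is $\le C(\omega)\sqrt{|a|}$. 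Concatenation gives a $C^1$ horizontal path from $e$ to $(A,a)$, so $d(e,(A,a))\le\|A\|_H+C(\omega)\sqrt{|a|}\le K_2(\|A\|_H+\sqrt{|a|})$; this also shows $d(e,(A,a))<\infty$, which here substitutes for Chow--Rashevsky. For the topological claim I would use that $d$ is left-invariant (left translations preserve horizontality and length because the integrand $|L_{\sigma^{-1}(s)*}\dot\sigma(s)|_{\mathfrak{g}_{CM}}$ is itself defined through left translation), so $d(x,y)=d(e,x^{-1}y)$, then combine the main estimate with the elementary bounds $\sqrt{\|A\|_H^2+|a|}\le\|A\|_H+\sqrt{|a|}\le\sqrt2\,\sqrt{\|A\|_H^2+|a|}$ to get $d(e,(A,a))\asymp\|(A,a)\|_{\mathfrak{g}_{CM}}$, and finally note that $y\mapsto\|x^{-1}y\|_{\mathfrak{g}_{CM}}$ generates the product topology on $H\times\mathbb{R}$ at each $x$ (the cross term $\omega(A_0,A)\to0$ as $A\to A_0$ by skew-symmetry), so the two topologies agree.

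I expect the only genuinely delicate point to be the upper bound: confirming that the loop produces precisely the vertical increment $a$ with length controlled by a constant multiple of $\sqrt{|a|}$ depending only on $\omega$ — this is exactly where surjectivity of $\omega$ enters — and arranging the path to be honestly $C^1$. The lower bound is a one-line Cauchy--Schwarz estimate and the topology statement is then purely formal.
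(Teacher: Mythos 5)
Your proof is correct, but note that the paper itself does not prove Proposition~\ref{p.length}: it is quoted from \cite{GordinaMelcher2013} and the reader is explicitly referred there for the argument, so there is no in-paper proof to compare against. What you have written is a legitimate self-contained derivation along the standard lines: the lower bound from $\|A\|_H\le\ell(\sigma)$ together with the Hilbert--Schmidt control $|\omega(h,k)|\le\|\omega\|_2\|h\|_H\|k\|_H$ (which is exactly where the infinite-dimensional hypothesis on $\omega$ enters, replacing the finite-dimensional compactness/homogeneity argument the paper's remark warns about), and the upper bound from the segment-plus-loop construction, with the vertical increment of the based loop computed correctly because the linear term $\tfrac12\int\omega(A,\dot\rho)$ vanishes for a closed loop. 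Two small points you should make explicit: (i) surjectivity gives you $w_1,w_2\in W$ with $\omega(w_1,w_2)\neq 0$, and you need the density of $H$ in $W$ plus the continuity of $\omega$ to move these into $H$ before normalizing to $\omega(h_1,h_2)=1$; (ii) the concatenated path must be reparametrized (e.g.\ so the velocity vanishes at the junction) to meet the $C^1$ requirement of Notation~\ref{n.length} without changing its length --- you flag this but it costs one line to actually do. With those additions the argument is complete, and the constants $K_1=\bigl(1+\sqrt{\|\omega\|_2/2}\bigr)^{-1}$ and $K_2$ depending on $\|h_1\|_H+\|h_2\|_H$ depend on $\omega$ exactly as the statement allows.
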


\begin{remark}
The equivalence of the homogeneous norm and horizontal distance
topologies is a standard result in finite dimensions.  However, the
usual proof of this result relies on compactness arguments that
must be avoided in infinite dimensions.  Thus, the proof for
Proposition \ref{p.length} included in \cite{GordinaMelcher2013}
necessarily relies on different methods particular to the structure
of the present groups. The reader is referred to \cite{GordinaMelcher2013}
for further details.

\end{remark}

As stated in \cite[Lemma 3.23]{BaudoinGordinaMelcher2013}, the horizontal distances on $G_{CM}$ and $G_P$ are connected as follows.

\begin{lemma} [Lemma 3.23 in \cite{BaudoinGordinaMelcher2013}]
\label{l.dn}
Let $\{P_n\}_{n=1}^\infty\subset\mathrm{Proj}(W)$ such that $P_n|_H\uparrow I_H$. For any $n\in \mathbb{N}$ and $x\in G_{P_n}$, then
\[
d_{P_n}\left( e, x \right) \rightarrow d\left( e, x \right), \text{ as }
n\rightarrow\infty. \]
\end{lemma}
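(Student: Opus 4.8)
The plan is to prove the convergence $d_{P_n}(e,x) \to d(e,x)$ by showing separately that $\liminf_{n} d_{P_n}(e,x) \geqslant d(e,x)$ and $\limsup_{n} d_{P_n}(e,x) \leqslant d(e,x)$, exploiting the explicit description of horizontal paths from Remark~\ref{r.horiz}. Fix $x=(A,a) \in G_{P_m}$ for some $m$, and note that for $n \geqslant m$ the point $x$ lies in $G_{P_n}$ as well since $P_n|_H \uparrow I_H$ implies the spaces $P_nH$ are nested (after passing to a suitable exhausting sequence, or at least $x$ eventually lies in each $G_{P_n}$).

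\textbf{Upper bound.} First I would fix $\varepsilon>0$ and choose a horizontal path $\sigma(t)=(A(t),a(t))$ in $G_{CM}$ from $e$ to $x$ with $\ell(\sigma) = \int_0^1 \|\dot A(s)\|_H\,ds \leqslant d(e,x)+\varepsilon$. The path need not lie in $G_{P_n}$, so I would project it: set $A_n(t):= P_n A(t)$ and define $a_n(t):= \frac{1}{2}\int_0^t \omega(A_n(s),\dot A_n(s))\,ds$, which by Remark~\ref{r.horiz} produces a horizontal path $\sigma_n$ in $G_{P_n}$ from $e$ to $(P_nA, a_n(1)) = (A, a_n(1))$ (using that $A \in P_nH$ eventually). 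Its length is $\int_0^1 \|P_n\dot A(s)\|_H\,ds \leqslant \ell(\sigma)$. The issue is that the endpoint is $(A,a_n(1))$ rather than $(A,a)$; since $P_n \to I_H$ strongly and $\|\omega\|_2 <\infty$, one has $\omega(A_n(s),\dot A_n(s)) \to \omega(A(s),\dot A(s))$ pointwise with an $L^1$-dominating function, so $a_n(1) \to a(1) = a$ by dominated convergence. Then I would correct the endpoint by concatenating with a short path in the center direction: moving from $(A,a_n(1))$ to $(A,a)$ within $G_{P_n}$ costs at most $C\sqrt{|a-a_n(1)|}$ by the homogeneous-norm comparison (the $G_P$ analogue of Proposition~\ref{p.length}, with constants controlled since $\omega|_{P_nW}$ stays surjective onto $\mathbb{R}$ — this needs a uniform lower bound on the relevant constant, which follows because $\omega$ restricted to a fixed finite-dimensional $P_mW$ already surjects). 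Hence $d_{P_n}(e,x) \leqslant \ell(\sigma) + C\sqrt{|a-a_n(1)|}$, and letting $n\to\infty$ then $\varepsilon \to 0$ gives $\limsup_n d_{P_n}(e,x) \leqslant d(e,x)$.

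\textbf{Lower bound.} For the reverse inequality, I would take for each $n$ a length-minimizing (or near-minimizing) horizontal path $\sigma_n$ in $G_{P_n}$ from $e$ to $x$, so $\ell(\sigma_n) = d_{P_n}(e,x)$. Since any such $\sigma_n$ is also a horizontal path in $G_{CM}$ (as $G_{P_n} \subset G_{CM}$ with compatible structures), we immediately get $d(e,x) \leqslant \ell(\sigma_n) = d_{P_n}(e,x)$, hence $d(e,x) \leqslant \liminf_n d_{P_n}(e,x)$. This direction is in fact the easy one and requires no limiting argument at all — the sub-Riemannian distance can only decrease when we enlarge the space of admissible horizontal paths.

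Combining the two bounds yields $d_{P_n}(e,x) \to d(e,x)$. The main obstacle is the upper bound, specifically obtaining the correct endpoint after projection while keeping the length controlled; the key technical inputs there are (i) the dominated-convergence argument for $a_n(1) \to a$, which relies on the finiteness of $\|\omega\|_2$ established in the remark after Definition~\ref{df.InfiniteDimensionalHeisenbergGroup}, and (ii) a version of Proposition~\ref{p.length} on $G_{P_n}$ with constants that do not blow up in $n$, needed to bound the cost of the central correction. If one prefers to avoid the uniform-constant issue entirely, an alternative is to absorb the central correction by a small perturbation of the path $A(\cdot)$ itself — e.g. appending a small loop in a fixed two-dimensional symplectic subspace of $P_mH$ whose enclosed $\omega$-area is exactly $a - a_n(1)$, whose length is $O(\sqrt{|a-a_n(1)|})$ by the isoperimetric relation in the Heisenberg group — which only uses the fixed finite-dimensional constant from $G_{P_m}$ and sidesteps any uniformity concern.
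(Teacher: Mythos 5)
Your proof is correct. Note that the paper itself gives no proof of this lemma --- it is imported verbatim as Lemma 3.23 of \cite{BaudoinGordinaMelcher2013} --- so there is no internal argument to compare against; your write-up is essentially the standard (and, as far as the cited source goes, the original) argument: the lower bound $d\leqslant d_{P_n}$ is immediate because every horizontal path in $G_{P_n}$ is a horizontal path in $G_{CM}$ of the same length, and the upper bound follows by projecting a near-minimizing path, using the boundedness of $\omega$ on $H\times H$ (which follows from the finiteness of $\Vert\omega\Vert_2$) plus dominated convergence to see that the vertical defect $a-a_n(1)$ tends to $0$, and then repairing the endpoint. You correctly identified the one genuinely delicate point --- that the cost of the vertical correction must be controlled uniformly in $n$ --- and your second fix (append a closed horizontal loop in a fixed two-dimensional symplectic subspace of $P_mH$ with $\omega$-area exactly $a-a_n(1)$ and length $O(\sqrt{\vert a-a_n(1)\vert})$, so that only the constant of the fixed group $G_{P_m}$ enters) is clean and avoids any uniformity issue with the constants in Proposition~\ref{p.length}. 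No gaps.
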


\subsubsection{Heat kernel measure on $G$}\label{sec.HeatKernelMeasureInfinite}

In this section, we recall the definition of the heat kernel measure on $G$ and some relevant properties. For simplicity, we only include the key ideas and we refer to \cite[Section 5.1]{BaudoinGordinaMelcher2013}, \cite[Section 2.6]{GordinaMelcher2013} and \cite[Section 4, 8]{DriverEldredgeMelcher2016} for more details.

First, we recall the definition of hypoelliptic Brownian motion $\{g_t\}_{t\ge0}$
on $G$ and state its basic properties. Let $\{B_t\}_{t\ge0}$ be a Brownian motion on $W$ with variance determined by
\[
\mathbb{E}\left[\langle B_s,h\rangle_H \langle B_t,k\rangle_H\right]
    = \langle h,k \rangle_H \min(s,t),
\]
for all $s,t\ge0$ and $h,k\in H_*$. A \emph{hypoelliptic Brownian motion} on $G$ is the continuous $G$-valued process given by
\[g_t = \left( B_t, \frac{1}{2}\int_0^t \omega(B_s,dB_s)\right)
\]
where $\int_0^t \omega(B_s,dB_s)$ is taken to be the limiting process defined in \cite[Proposition 4.1, ]{DriverGordina2008} and its well-definedness relies on the finiteness of the Hilbert-Schmidt norm of $\omega$. By \cite[Proposition 5.6]{BaudoinGordinaMelcher2013}, $\frac{1}{2}L$ is the generator for $\{g_t\}_{t\ge0}$.

Now we define the heat kernel measure on $G$ as the end point distribution of a Brownian motion.

\begin{definition}
\label{df.HeatKernelMeasureInfinite}
We call a family of measures $\{\nu_t\}_{t>0}$ on $G$ defined by $\nu_t=\mathrm{Law}(g_{t})$ for any $t>0$ the {\em heat kernel measure
at time $t$}.
\end{definition}

Analogously to the heat kernel measure on non-isotropic Heisenberg groups, \cite[Corollary 5.7]{BaudoinGordinaMelcher2013} shows that $\{\nu_t\}_{t>0}$ satisfies the heat equation as follows
\begin{align*}
& \frac{d}{dt} \int_{G}f\left( g \right)d\nu_t\left(g \right)=\int_{G}\left(\frac{1}{2}Lf\right)\left( g \right)d\nu_t\left( g \right),
\\
&
\lim_{t \to 0}\int_{G}f\left( g \right)d\nu_t\left( g \right)=f(e)
\end{align*}
for any $t>0$ and any cylinder polynomial $f$.

We include the following proposition (see \cite[Proposition
2.30]{GordinaMelcher2013}) which states that, as the name suggests, the Cameron-Martin subgroup is a subspace of heat kernel measure 0.

\begin{proposition}[Proposition 2.30 in \cite{GordinaMelcher2013}]
For all $t>0$, $\nu_t(G_{CM})=0$.
\end{proposition}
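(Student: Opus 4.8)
The plan is to trace the measure $\nu_t$ back to a Gaussian measure on $W$ and then invoke the classical fact that a non-degenerate Gaussian measure on an infinite-dimensional separable Banach space assigns no mass to its Cameron--Martin subspace. Recall from Section~\ref{sec.HeatKernelMeasureInfinite} that $\nu_t=\mathrm{Law}(g_t)$ with $g_t=\left(B_t,\tfrac{1}{2}\int_0^t\omega(B_s,dB_s)\right)$, where the second coordinate is an a.s.\ finite real-valued random variable (its construction via \cite[Proposition 4.1]{DriverGordina2008} depends only on $\|\omega\|_2<\infty$). Since $G_{CM}=H\times\mathbb{R}$, we have $g_t\in G_{CM}$ if and only if $B_t\in H$, whence
\[
\nu_t(G_{CM})=\mathbb{P}(B_t\in H).
\]
Now $B_t$ is a $W$-valued Gaussian random variable whose covariance is $t$ times that of $\mu$, so $\mathrm{Law}(B_t)$ is a non-degenerate Gaussian measure on $W$ with the same Cameron--Martin subspace $H$, viewed as a set. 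Thus it suffices to prove $\mathbb{P}(B_t\in H)=0$.

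For this step I would use coordinate functionals coming from a basis inside $H_*$. Using that $H_*$ is dense in $H$, fix an orthonormal basis $\{e_j\}_{j=1}^\infty$ of $H$ with each $e_j\in H_*$, and put $\xi_j:=\langle\,\cdot\,,e_j\rangle_H\in W^*$. Since the $\xi_j(B_t)$ are jointly Gaussian with $\mathbb{E}[\xi_i(B_t)\xi_j(B_t)]=t\langle e_i,e_j\rangle_H=t\,\delta_{ij}$, the sequence $\{\xi_j(B_t)\}_j$ is i.i.d.\ $N(0,t)$. On the event $\{B_t\in H\}$, Parseval's identity gives $\sum_{j=1}^\infty\xi_j(B_t)^2=\|B_t\|_H^2<\infty$, so $\{B_t\in H\}\subseteq\{\sum_j\xi_j(B_t)^2<\infty\}$. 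On the other hand, the strong law of large numbers applied to the i.i.d.\ variables $\xi_j(B_t)^2$, which have mean $t>0$, yields $\frac{1}{n}\sum_{j=1}^n\xi_j(B_t)^2\to t$ a.s., hence $\sum_j\xi_j(B_t)^2=\infty$ a.s. Therefore $\mathbb{P}\bigl(\sum_j\xi_j(B_t)^2<\infty\bigr)=0$, and a fortiori $\mathbb{P}(B_t\in H)=0$, which is the claim.

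The argument is short, so there is no serious obstacle; the only points that need care are bookkeeping about the abstract Wiener space. One should note that $H$ is a Borel subset of $W$ (it is $F_\sigma$: each ball $\{\|w\|_H\le n\}$ is an intersection of closed half-spaces, hence closed), so that $\nu_t(G_{CM})$ is well-defined, and one must ensure the $\xi_j$ genuinely extend to continuous functionals on $W$, which is exactly why the basis is chosen in $H_*$; alternatively one can realize the $\xi_j$ as elements of $L^2(W,\mathrm{Law}(B_t))$ via the Paley--Wiener isometry and avoid that restriction. Everything else is an immediate consequence of Theorem~\ref{t.2.3} and elementary probability.
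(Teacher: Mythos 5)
Your argument is correct. Note that the paper does not actually prove this proposition; it only quotes it from \cite[Proposition 2.30]{GordinaMelcher2013}, so there is no in-paper proof to compare against. Your reduction $\nu_t(G_{CM})=\mathbb{P}(B_t\in H)$ (valid because the vertical component of $g_t$ is an a.s.\ finite real random variable and $G_{CM}=H\times\mathbb{R}$), followed by the strong-law-of-large-numbers argument showing that an infinite-dimensional Gaussian measure annihilates its Cameron--Martin subspace, is the standard route and is essentially what the cited reference relies on; your side remarks on the Borel measurability of $H$ and on choosing the basis inside $H_*$ are exactly the right points of care. The only thing worth stating explicitly is that the conclusion (and your choice of an infinite orthonormal basis $\{e_j\}_{j=1}^{\infty}\subset H_*$) presupposes $\dim H=\infty$; in the degenerate finite-dimensional case of Example~\ref{ex.Heis} one instead has $\nu_t(G_{CM})=1$, so the standing infinite-dimensionality assumption should be invoked.
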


Finally, we collect some results that connect $g_t$ and $\nu_t$ with the Brownian motion and the heat kernel measure on finite-dimensional projection groups. They can be found in \cite{BaudoinGordinaMelcher2013} and \cite{GordinaMelcher2013}.

\begin{notation}
For $P\in\mathrm{Proj}(W)$, let $g_t^P$ be the continuous process on $G_P$
defined by
\[ g_t^P = \left(PB_t, \frac{1}{2}\int_0^t\omega(PB_s,dPB_s)\right)\]
and let $\nu_t^P :=\mathrm{Law}(g_{t}^P)$.
\end{notation}

As stated in \cite[Proposition 5.4]{BaudoinGordinaMelcher2013}, $g_t^P$ is a Brownian motion on $G_P$ and $\{g_t\}_{t>0}$ can be approximated by the hypoelliptic Brownian motion on the finite-dimensional projection groups. In particular, for all $p\in[1,\infty)$ and $t>0$, for a family of increasing
projections $\{P_n\}_{n=1}^\infty\subset\mathrm{Proj}(W)$, we have
\begin{align} \label{eqn.bmapprox}
\lim_{n\rightarrow\infty} \mathbb{E}\left[\sup_{\tau\le t}
    \|g_\tau^{P_n}-g_\tau\|_\mathfrak{g}^p\right]=0.
\end{align}

For all projections satisfying H\"ormander's condition, the Brownian
motions on $G_P$ are subelliptic diffusions and thus
their laws are absolutely continuous with respect to the
finite-dimensional reference measure and their transition kernels
are smooth. By \cite[Lemma 2.27]{GordinaMelcher2013}, for all $P\in\operatorname*{Proj}(W)$ and $t>0$, we have
\begin{align} \label{eqn.HeatKernelMeasureProjection}
\nu_{t}^{P}(dx) = p_t^P(x)dx
\end{align}
where $dx$ is the Riemannian volume measure (equal to Haar measure) and
$p_{t}^{P}(x)$ is the heat kernel on $G_P$. This is consistent with Definition~\ref{df.HeatKernelMeasureNonisotropic}.

\subsection{Closability of the Dirichlet form and Logarithmic Sobolev inequalities on infinite-dimensional Heisenberg groups}

We  start by proving the closability of the Dirichlet form corresponding to the horizontal gradient.

\begin{theorem}
Given cylinder polynomials $u$, $v$ on $G$, let
\begin{align}
\mathcal{E}^{0}_t\left(u,v\right):=\int_G \langle \operatorname{grad}_Hu,\operatorname{grad}_Hv \rangle_H d\nu_t.
\end{align}
Then $\mathcal{E}^{0}_t$ is closable and its closure, $\mathcal{E}_t$, is a Dirichlet form on $L^2\left(G,\nu_t\right)$.
\end{theorem}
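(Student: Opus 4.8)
\emph{Strategy.} The plan is to deduce closability of $\mathcal{E}^0_t$ from an integration by parts formula for $\nu_t$ along Cameron--Martin directions, and then to obtain the Markovian property from the gradient structure of the form in the standard way. The integration by parts formula is the essential input and is not elementary: it follows from the quasi-invariance of $\nu_t$ under translations by $G_{CM}$ proved in \cite{BaudoinGordinaMelcher2013} (see also \cite{DriverEldredgeMelcher2016}). Concretely, since $\widetilde{(h,0)}$ generates the one-parameter flow $x\mapsto x\cdot(sh,0)$, differentiating in $s$ at $0$ the Radon--Nikodym density of the pushforward of $\nu_t$ under this flow produces, for each $h\in H$, a function $\beta_h^t\in L^{p}(G,\nu_t)$ for $p$ sufficiently large (as provided by \cite{BaudoinGordinaMelcher2013}), with $p>2$, such that
\begin{align*}
\int_G \bigl(\widetilde{(h,0)}u\bigr)\,v\,d\nu_t
  &= -\int_G u\,\bigl(\widetilde{(h,0)}v\bigr)\,d\nu_t
     + \int_G u\,v\,\beta_h^t\,d\nu_t
\end{align*}
for all cylinder polynomials $u,v$; the product form follows from the Leibniz rule for $\widetilde{(h,0)}$. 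Two elementary facts will be used repeatedly: $\nu_t=\mathrm{Law}\bigl(B_t,\tfrac12\int_0^t\omega(B_s,dB_s)\bigr)$ has moments of all orders in both coordinates (using $\|\omega\|_2<\infty$), so every cylinder polynomial lies in $\bigcap_{p<\infty}L^p(G,\nu_t)$; and, $\nu_t$ having a finite exponential moment of some positive order, its finite-dimensional marginals are moment-determinate, and hence cylinder polynomials are dense in $L^2(G,\nu_t)$.

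\emph{Closability.} Let $\{u_n\}$ be cylinder polynomials with $u_n\to 0$ in $L^2(G,\nu_t)$ and $\operatorname{grad}_Hu_n\to\Phi$ in $L^2(G,\nu_t;H)$, and fix an orthonormal basis $\{e_j\}_{j=1}^\infty$ of $H$. For any $j$ and any cylinder polynomial $v$, since $\langle\operatorname{grad}_Hu_n,e_j\rangle_H=\widetilde{(e_j,0)}u_n$, the integration by parts formula gives
\begin{align*}
\int_G \langle\Phi,e_j\rangle_H\,v\,d\nu_t
  &= \lim_{n\to\infty}\int_G \bigl(\widetilde{(e_j,0)}u_n\bigr)\,v\,d\nu_t \\
  &= \lim_{n\to\infty}\left(-\int_G u_n\,\bigl(\widetilde{(e_j,0)}v\bigr)\,d\nu_t
     + \int_G u_n\,v\,\beta_{e_j}^t\,d\nu_t\right) = 0,
\end{align*}
because $u_n\to 0$ in $L^2(\nu_t)$ while $\widetilde{(e_j,0)}v$ is a cylinder polynomial and $v\,\beta_{e_j}^t$ is a cylinder polynomial times an $L^{p}$ function with $p>2$, so both factors lie in $L^2(\nu_t)$. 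By density of cylinder polynomials in $L^2(G,\nu_t)$ we conclude $\langle\Phi,e_j\rangle_H=0$ $\nu_t$-a.e.\ for every $j$, hence $\Phi=0$ $\nu_t$-a.e. Thus $\mathcal{E}^0_t$ is closable; let $(\mathcal{E}_t,D(\mathcal{E}_t))$ denote its closure, a closed symmetric form on $L^2(G,\nu_t)$.

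\emph{Dirichlet property.} Since $\nu_t$ is a probability measure, the constant $1$ is a cylinder polynomial with $\operatorname{grad}_H1=0$, so $1\in D(\mathcal{E}_t)$ and $\mathcal{E}_t(1,1)=0$. For the Markovian (unit contraction) property it suffices to show that if $u\in D(\mathcal{E}_t)$ then $v:=(0\vee u)\wedge 1\in D(\mathcal{E}_t)$ and $\mathcal{E}_t(v,v)\le\mathcal{E}_t(u,u)$. For a smooth cylinder function $u$ and a smooth $\phi:\mathbb{R}\to\mathbb{R}$ with $\phi(0)=0$ and $|\phi'|\le1$, the chain rule $\operatorname{grad}_H(\phi\circ u)=(\phi'\circ u)\operatorname{grad}_Hu$ holds, so $\mathcal{E}^0_t(\phi\circ u,\phi\circ u)\le\mathcal{E}^0_t(u,u)$; approximating the unit contraction by such $\phi$, approximating $u\in D(\mathcal{E}_t)$ by cylinder polynomials, and using closedness of $\mathcal{E}_t$ gives the claim. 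This is the standard fact that the closure of a form of gradient type is a Dirichlet form; cf.\ \cite{BakryGentilLedouxBook}. Hence $\mathcal{E}_t$ is a symmetric Dirichlet form on $L^2(G,\nu_t)$.

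\emph{Main obstacle.} The real work is concentrated in the integration by parts formula together with the $L^{p}$ bound, $p>2$, on the logarithmic-derivative terms $\beta_h^t$, for which we rely on the quasi-invariance results of \cite{BaudoinGordinaMelcher2013, DriverEldredgeMelcher2016}. It is worth noting why a purely finite-dimensional argument is not immediate: although on each projection group $G_P$ the analogous form is closable and Dirichlet (because $\nu_t^P=p_t^P\,dx$ with $p_t^P$ smooth and strictly positive), the projections $\pi_P$ are not group homomorphisms, so $(\pi_P)_{\#}\nu_t\neq\nu_t^P$ in general, and one would then need a separate regularity study of the measures $(\pi_P)_{\#}\nu_t$; the integration by parts approach circumvents this.
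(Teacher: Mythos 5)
Your proposal is correct and follows essentially the same route as the paper: the key input in both is the integration by parts formula for $\nu_t$ in Cameron--Martin directions coming from \cite{BaudoinGordinaMelcher2013, DriverEldredgeMelcher2016}, with your direct verification of the sequential closability criterion being the standard equivalent of the paper's argument that $\operatorname{grad}_H$ has a densely defined adjoint (via \cite{ReedSimonI}). You are in fact more explicit than the paper on two points it leaves implicit --- the density of cylinder polynomials in $L^2(G,\nu_t)$ and the verification of the Markovian property via normal contractions --- both of which are handled correctly.
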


\begin{proof}
The closability of $\mathcal{E}^{0}_t$ is equivalent to the closability of the horizontal gradient operator $\operatorname{grad}_H: L^2\left(\nu_t\right) \rightarrow L^2\left(\nu_t\right) \otimes H$ with the domain $\mathcal{D}\left(\operatorname{grad}_H\right)$ being the space of cylinder polynomials on $G$. To check the latter statement, by \cite[Theorem VIII.1]{ReedSimonI}, it suffices to show that $\operatorname{grad}_{H}$ has a densely defined adjoint. For this we use an integration by parts formula for the hypoelliptic heat kernel measure $\nu_{t}$. Namely, for any $h\in H$ and any cylindrical polynomials $u$ and $v$ we have
\begin{align*}
\langle \operatorname{grad}_Hu,v\cdot h\rangle_{L^2\left(\nu_t\right) \otimes H} & =\int_{G}\widetilde{\left(h,0\right)}u \cdot v d\nu_t
\\
&
=\int_{G} \left(\widetilde{\left(h,0\right)}\left(u\cdot v\right)-u\cdot \widetilde{\left(h,0\right)}v\right)d\nu_t
\\
&
=\langle u, v\widetilde{\left(h,0\right)}^{*}1-\widetilde{\left(h,0\right)}v\rangle_{L^2\left(\nu_t\right)},
\end{align*}
where we used \eqref{eqn.HorizaontalGradientInfinite} in the first equality, the product rule in the second equality and the integration by parts formula for $\nu_t$ (see \cite[Corollary 8.10]{DriverEldredgeMelcher2016} for the explicit expression for $\widetilde{\left(h, 0\right)}^{*}$) in the third equality. This shows that $v\cdot h$ is in the domain of the adjoint of $\operatorname{grad}_H$. This completes the proof since functions of the form $v\cdot h$ are total in $L^2\left(\nu_t\right) \otimes H$. Therefore, we can see that the closure of $\mathcal{E}^0_t$, $\mathcal{E}_t$ is a Dirichlet form on $L^2\left(G,\nu_t\right)$.
\end{proof}

By \cite[Chapter IV, Section 4b]{MaRocknerBook} or \cite[Proposition 3.1]{RocknerSchmuland1992}, such a closed Dirichlet form $\mathcal{E}_t$ that we constructed is quasi-regular, which allows us to study the associated process in the infinite-dimensional setting we are considering. In this section,  we will extend our consideration of functions to a wider class  and then prove a hypoelliptic logarithmic Sobolev inequality on $G$.

\begin{definition}
We say that $G$ with the heat kernel measure $\nu_t$ satisfies a \emph{logarithmic Sobolev inequality with constant $C\left(\omega, t\right)$} if
\begin{align} \label{LSIInfinite}
\int_{G}f^2\log f^2d\nu_t-\left(\int_{G}f^2 d\nu_t\right)\log\left(\int_{G}f^2d\nu_t\right)
\leqslant C\left(\omega, t\right) \mathcal{E}_t\left(f,f\right)
\end{align}
for any $f\in \mathcal{D}\left(\mathcal{E}_t\right)$ and any $t>0$. In such a case we also say that $\operatorname{LSI}\left(C\left(\omega, t \right), \nu_t\right)$ holds.
\end{definition}

We can now prove a hypoelliptic logarithmic Sobolev inequality on $G$.

\begin{theorem} \label{thm.LSIInfinite}
The logarithmic Sobolev inequality $\operatorname{LSI}\left( C\left(\omega,t\right), \nu_t \right)$ holds on $G$ where the logarithmic Sobolev constant can be chosen to be $C\left(\omega,t\right)=C\left(\omega_0\right)t$.
\end{theorem}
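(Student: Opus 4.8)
The plan is to obtain the inequality on $G$ from the finite-dimensional inequalities on the projection groups $G_{P}$, using crucially that by Theorem~\ref{thm.LSINonisotropic} the constant $C(\omega_0)t$ in those inequalities is independent of $P$ (and of the restricted symplectic form). Fix a cylinder polynomial $f=F\circ\pi_{P}$ on $G$ in the sense of Definition~\ref{d.cyl}, and choose a sequence $\{P_n\}_{n=1}^{\infty}\subset\mathrm{Proj}(W)$ which is increasing, satisfies $P_n|_H\uparrow I_H$, and has $PH\subset P_nH$ for all $n$. For $n$ large the form $\omega$ restricted to $P_nW$ is symplectic, so $G_{P_n}$ is a non-isotropic Heisenberg group and, by \eqref{eqn.HeatKernelMeasureProjection}, $\nu_t^{P_n}$ is its hypoelliptic heat kernel measure. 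Hence Theorem~\ref{thm.LSINonisotropic} applies to $G_{P_n}$, and after the standard extension of the finite-dimensional logarithmic Sobolev inequality from $C_c^{\infty}(G_{P_n})$ to cylinder polynomials (justified by the Gaussian-type decay of $p_t^{P_n}$), we obtain for $f_n:=f|_{G_{P_n}}$
\begin{align*}
& \int_{G_{P_n}}f_n^2\log f_n^2\,d\nu_t^{P_n}-\left(\int_{G_{P_n}}f_n^2\,d\nu_t^{P_n}\right)\log\left(\int_{G_{P_n}}f_n^2\,d\nu_t^{P_n}\right)\\
& \qquad\leqslant C(\omega_0)t\int_{G_{P_n}}\bigl|\operatorname{grad}_H^{P_n}f_n\bigr|_H^2\,d\nu_t^{P_n}.
\end{align*}

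The next step is to rewrite each term as an expectation over the hypoelliptic Brownian motion and pass to the limit $n\to\infty$. Since $\nu_t^{P_n}=\mathrm{Law}(g_t^{P_n})$ and $g_t^{P_n}\in G_{P_n}\subset G$, we have $f_n(g_t^{P_n})=f(g_t^{P_n})$, so the left-hand side is $\mathbb{E}\bigl[f(g_t^{P_n})^2\log f(g_t^{P_n})^2\bigr]-\mathbb{E}\bigl[f(g_t^{P_n})^2\bigr]\log\mathbb{E}\bigl[f(g_t^{P_n})^2\bigr]$. By \eqref{eqn.bmapprox}, $g_t^{P_n}\to g_t$ in $L^p(\mathbb{P};\|\cdot\|_{\mathfrak{g}})$ for every $p$, hence $f(g_t^{P_n})\to f(g_t)$ in probability; since $f$ is a cylinder polynomial and $\sup_n\mathbb{E}\|g_t^{P_n}\|_{\mathfrak{g}}^p<\infty$ for all $p$ (again by \eqref{eqn.bmapprox}), the families $\{f(g_t^{P_n})^2\}$ and $\{f(g_t^{P_n})^2\log f(g_t^{P_n})^2\}$ are uniformly integrable, so the left-hand side converges to $\int_G f^2\log f^2\,d\nu_t-\bigl(\int_G f^2\,d\nu_t\bigr)\log\bigl(\int_G f^2\,d\nu_t\bigr)$. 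For the right-hand side, since the $P_n$ are nested, $\widetilde{(e_i,0)}f_n=\widetilde{(e_i,0)}f$ on $G_{P_n}$ for $e_i\in P_nH$, and by \eqref{eqn.HorizaontalGradientInfinite} one has $\bigl|\operatorname{grad}_H^{P_n}f_n\bigr|_H^2=\bigl|(P_n|_H)\operatorname{grad}_Hf\bigr|_H^2=:\Phi_n$ on $G_{P_n}$; this is a continuous function on $G$ increasing pointwise to $\Phi:=|\operatorname{grad}_Hf|_H^2$, with $\mathbb{E}[\Phi(g_t)]=\mathcal{E}^0_t(f,f)<\infty$. From $\Phi_n\leqslant\Phi$, continuity of $\Phi$ and the uniform moment bounds, $\limsup_n\mathbb{E}[\Phi_n(g_t^{P_n})]\leqslant\lim_n\mathbb{E}[\Phi(g_t^{P_n})]=\mathbb{E}[\Phi(g_t)]$; conversely, for fixed $m$ and $n\geqslant m$, $\Phi_n\geqslant\Phi_m$, so $\liminf_n\mathbb{E}[\Phi_n(g_t^{P_n})]\geqslant\lim_n\mathbb{E}[\Phi_m(g_t^{P_n})]=\mathbb{E}[\Phi_m(g_t)]$, and letting $m\to\infty$ by monotone convergence yields $\liminf_n\mathbb{E}[\Phi_n(g_t^{P_n})]\geqslant\mathbb{E}[\Phi(g_t)]$. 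Thus the right-hand side converges to $C(\omega_0)t\,\mathcal{E}^0_t(f,f)=C(\omega_0)t\,\mathcal{E}_t(f,f)$, and \eqref{LSIInfinite} holds for every cylinder polynomial $f$.

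Finally, since cylinder polynomials form the domain of $\mathcal{E}^0_t$, whose closure is $\mathcal{E}_t$, a routine approximation argument extends \eqref{LSIInfinite} to all $f\in\mathcal{D}(\mathcal{E}_t)$: first reduce to bounded $f$ by truncation, then approximate by cylinder polynomials $f_k$ with $f_k\to f$ in $L^2(\nu_t)$ and $\mathcal{E}_t(f_k,f_k)\to\mathcal{E}_t(f,f)$, and pass to the limit on the left using lower semicontinuity of the entropy and uniform integrability. This gives $\operatorname{LSI}(C(\omega_0)t,\nu_t)$ on $G$.

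The main obstacle is the limiting argument for the Dirichlet-energy term in the second step: both the integrand $\Phi_n$ and the argument $g_t^{P_n}$ vary with $n$, and $\Phi_n$ is unbounded, so one must carefully combine the monotone convergence $\Phi_n\uparrow\Phi$ with the $L^p$-convergence and the uniform moment bounds of \eqref{eqn.bmapprox}; the extension to $\mathcal{D}(\mathcal{E}_t)$ and the extension of the finite-dimensional inequality to cylinder polynomials are standard but require the usual care with the entropy functional and with the heat kernel tails.
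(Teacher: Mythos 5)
Your proposal follows essentially the same route as the paper: reduce to cylinder polynomials, apply the dimension-independent finite-dimensional inequality of Theorem~\ref{thm.LSINonisotropic} on the projection groups $G_{P_n}$ via \eqref{eqn.HeatKernelMeasureProjection}, and pass to the limit using the Brownian motion approximation \eqref{eqn.bmapprox}, with the final extension to $\mathcal{D}(\mathcal{E}_t)$ by closability. Your treatment of the limit is in fact more careful than the paper's appeal to dominated convergence (the $\limsup/\liminf$ sandwich for the energy term, where both the integrand and the law vary with $n$, and the uniform integrability for the entropy term are exactly the points that need justification), so this is a correct and somewhat more detailed version of the same argument.
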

Before we proceed to the proof we observe that the logarithmic Sobolev constant is the same as in Theorem~\ref{thm.LSINonisotropic} independent of $\omega$.

\begin{proof}
Our proof uses an approximation argument which is similar to the elliptic case in \cite[Section 8.2]{DriverGordina2008}, even though we do not have uniform curvature bounds.

Any $f\in \mathcal{D}\left(\mathcal{E}_t\right)$ can be approximated by the case when the functions are cylinder polynomials by a standard  limiting argument similarly to  \cite[Example 2.7]{Gross1993b}, so it suffices to consider $f:G \rightarrow \mathbb{R}$ to be a cylinder polynomial as in Definition~\ref{d.cyl}. Let $\{P_n\}_{n=1}^\infty\subset\mathrm{Proj}(W)$ such that $P_n|_H\uparrow I_H$, then $\{G_{P_n}\}_{n\geqslant 1}$ is a family of non-isotropic Heisenberg groups. By Theorem~\ref{thm.LSINonisotropic} together with \eqref{eqn.HeatKernelMeasureProjection}, we have
\begin{align*}
& \int_{G_{P_n}}f^2\log f^2d\nu^{P_n}_t-\left(\int_{G_{P_n}}f^2d\nu^{P_n}_t\right)\log\left(\int_{G_{P_n}}f^2d\nu^{P_n}_t\right)
\\
&
\leqslant C\left(\omega_0\right)t\int_{G_{P_n}} \left\Vert \operatorname{grad}^{P_n}_Hf\right\Vert^2_H d\nu^{P_n}_t.
\end{align*}
Letting $n\to \infty$ in the above inequality, by \eqref{eqn.bmapprox} and the Dominated Convergence theorem, we can prove \eqref{LSIInfinite}. Thus we can take $C\left(\omega,t\right)=C\left(\omega_0\right)t$ which is the same as the constant for the non-isotropic finite-dimensional Heisenberg group, which is essentially the constant on the three-dimensional isotropic Heisenberg group, and therefore it is independent of $\omega$.
\end{proof}

\appendix

\section{Symplectic forms}\label{s.SymplSpace}

The exposition below is based on \cite[Chapter 2]{McDuffSalamonBook1998}. Suppose $V$ is a real vector space. In what follows we assume that all objects are defined over $\mathbb{R}$.

\begin{definition} A \emph{skew-symmetric bilinear form} is a bilinear form $\omega: V \times V \longrightarrow \mathbb{R}$  such that $\omega\left( v, w \right)=-\omega\left( w, v \right)$ for all $v, w \in V$.  A \emph{symplectic form} is a non-degenerate skew-symmetric bilinear form, that is, such a skew-symmetric bilinear form that if  $\omega\left( v, w \right)=0$ for all $v \in V$, then $w=0$. We call $V$ equipped with a symplectic form a \emph{symplectic space} $\left( V, \omega \right)$.
\end{definition}
Note that for a skew-symmetric form we have $\omega\left( v, v \right)=0$.

\begin{theorem}\label{t.SymplBasis} Suppose $\left( V, \omega \right)$ is a symplectic space. Then $\operatorname{dim} V$ is even and there exists a \emph{symplectic basis} of $V$, that is,

\begin{align*}
& \omega\left( p_{i}, q_{i} \right)=-\omega\left( q_{i}, p_{i}\right)=1,
\\
& \omega\left( p_{i}, q_{j} \right)=0, i\not=j,
\\
& \omega\left( p_{i}, p_{j} \right)=\omega\left( q_{i}, q_{j} \right)=0
\end{align*}
for $i, j =1, .., n$, where $\operatorname{dim} V=2n$.
\end{theorem}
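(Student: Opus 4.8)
The plan is to prove the statement by induction on $\dim V$, which we take to be finite (otherwise the assertion that $\dim V$ is even is not meaningful). The base case $\dim V=0$ is vacuous, with the empty set serving as a symplectic basis. For the inductive step, suppose $V\neq\{0\}$ and pick any nonzero $p_{1}\in V$. By non-degeneracy of $\omega$ there is $v\in V$ with $\omega\left(p_{1},v\right)\neq 0$, and rescaling we set $q_{1}:=v/\omega\left(p_{1},v\right)$, so that $\omega\left(p_{1},q_{1}\right)=1$. Since $\omega\left(p_{1},p_{1}\right)=0\neq 1=\omega\left(p_{1},q_{1}\right)$, the vectors $p_{1}$ and $q_{1}$ are linearly independent; write $W:=\operatorname{span}\left\{p_{1},q_{1}\right\}$ and let $W^{\perp}:=\left\{v\in V:\omega\left(v,p_{1}\right)=\omega\left(v,q_{1}\right)=0\right\}$ be its symplectic complement.

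The key step is to show that $V=W\oplus W^{\perp}$ and that the restriction of $\omega$ to $W^{\perp}$ is again non-degenerate. For the decomposition: if $ap_{1}+bq_{1}\in W^{\perp}$, then pairing this vector with $p_{1}$ and with $q_{1}$ forces $a=b=0$, so $W\cap W^{\perp}=\{0\}$; and for arbitrary $v\in V$ one checks directly that $v-\omega\left(v,q_{1}\right)p_{1}+\omega\left(v,p_{1}\right)q_{1}\in W^{\perp}$, which exhibits $v$ as a sum of an element of $W$ and an element of $W^{\perp}$. For non-degeneracy of $\omega$ on $W^{\perp}$: if $w\in W^{\perp}$ satisfies $\omega\left(w,u\right)=0$ for all $u\in W^{\perp}$, then since $w$ also pairs trivially with $W$ by definition of $W^{\perp}$, and since $V=W\oplus W^{\perp}$, we get $\omega\left(w,v\right)=0$ for all $v\in V$, hence $w=0$ by non-degeneracy of $\omega$ on $V$.

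Thus $\left(W^{\perp},\omega|_{W^{\perp}}\right)$ is a symplectic space with $\dim W^{\perp}=\dim V-2$, so by the inductive hypothesis $\dim W^{\perp}$ is even and $W^{\perp}$ admits a symplectic basis $p_{2},q_{2},\dots,p_{n},q_{n}$. Adjoining $p_{1}$ and $q_{1}$ then yields the desired symplectic basis of $V$: the relations among $p_{2},q_{2},\dots,p_{n},q_{n}$ hold by the inductive hypothesis, $\omega\left(p_{1},q_{1}\right)=1$ by construction, and all mixed relations $\omega\left(p_{1},p_{j}\right)=\omega\left(p_{1},q_{j}\right)=\omega\left(q_{1},p_{j}\right)=\omega\left(q_{1},q_{j}\right)=0$ for $j\geqslant 2$ hold because $p_{j},q_{j}\in W^{\perp}$. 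In particular $\dim V=\dim W^{\perp}+2$ is even, completing the induction. The only point needing genuine care is the non-degeneracy of $\omega$ on $W^{\perp}$, which is what makes the induction close; the remaining verifications are routine bilinear-form identities. (Alternatively, the evenness of $\dim V$ by itself also follows from the fact that the Gram matrix of $\omega$ in any basis is skew-symmetric and invertible, whereas an odd-size skew-symmetric matrix has determinant zero.)
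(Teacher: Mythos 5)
Your proof is correct and follows essentially the same route as the paper's: induction on $\dim V$, splitting off a hyperbolic pair $\left\{p_{1},q_{1}\right\}$, decomposing $V=W\oplus W^{\perp}$, and verifying non-degeneracy of $\omega$ on $W^{\perp}$. The only cosmetic difference is that you exhibit the decomposition by an explicit projection formula, whereas the paper deduces $\dim W+\dim\operatorname{Span}\left\{p,q\right\}=\dim V$ from non-degeneracy; both are fine.
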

Such a basis also is called an \emph{$\omega$-standard basis}. Observe that this notion does not require $V$ to be equipped with any inner product.

\begin{proof}
We prove it by induction on $\operatorname{dim} V$. The base case is evident due to the non-degeneracy of $\omega$. Assume now  $\operatorname{dim} V=n$ and assume the result holds for all vector spaces of dimension $n-2$. Suppose $q \in V$ is non-zero. The form $\omega$ is non-degenerate, there exists $p \in V$ such that $\omega\left( p, q \right)\not=0$. We can normalize $p$ and $q$ in such a way $\omega\left( p, q \right)=1$. Denote
\[
W:=\left\{ v \in V: \omega\left( v, p \right)=0\text{ and } \omega\left( v, q \right)=0 \right\}.
\]
We claim that $W\cap \operatorname{Span}\left\{ p, q \right\} = \left\{ 0 \right\}$. Indeed, suppose $v \in W \cap \operatorname{Span}\left\{ p, q \right\}$. Then $v=ap+bq$ for some $a, b \in \mathbb{R}$. Since $v \in W$, we have that $\omega\left( v, p \right)=0$. At the same time $\omega\left( v, p \right)=-b$, so $b=0$. Similarly $a=0$, hence $v=0$. Now we can use that $\omega$ is non-degenerate, so $\operatorname{dim} W + \operatorname{dim}\operatorname{Span}\left\{ p, q \right\} = \operatorname{dim} V$, and therefore $V=W \oplus \operatorname{Span}\left\{ p, q \right\}$.

To use the inductive hypotheses, we need to check that the restriction of $\omega$ to $W$ is a symplectic form. It is obviously skew-symmetric, so we just need to check that it is non-degenerate.  Take $w \in W, w\not=0$, then there is a $v \in V$ such that $\omega\left( v, w \right)\not=0$. Then we can write $w=v_{1}+v_{2}$ with $v_{1} \in W$ and $v_{2} \in \operatorname{Span}\left\{ p, q \right\}$. As $\omega\left( v_{2}, w \right)=0$, then $\omega\left( v_{1}, w \right)\not=0$, that is, $\omega$ is non-degenerate on $W$. We complete the proof by applying the inductive hypothesis to $W$ equipped with the restriction of the symplectic form $\omega$.
\end{proof}

Suppose that in addition to $\omega$ the vector space $V$ is equipped with an inner product. A different proof gives simultaneous normalization of the symplectic form $\omega$ and an inner product on $V$, and as a by-product the existence of a symplectic basis.

\begin{proposition}[Lemma 2.42 in \cite{McDuffSalamonBook1998}]\label{p.SymplBasis}
Suppose $\left( V, \omega \right)$ is a symplectic space of dimension $2n$, and $g: V \times V \longrightarrow \mathbb{R}$ is an inner product on $V$. Then there is a symplectic basis $\left\{ p_{i}, q_{j} \right\}_{i, j=1}^{n}$ such that it is $g$-orthogonal and

\[
g\left( p_{i}, p_{i} \right)=g\left( q_{i}, q_{i} \right), i=1, ..., n.
\]
\end{proposition}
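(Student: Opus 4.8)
The statement to prove is Proposition~\ref{p.SymplBasis}: given a symplectic form $\omega$ and an inner product $g$ on a $2n$-dimensional real vector space $V$, there is a $g$-orthogonal symplectic basis $\{p_i,q_i\}_{i=1}^n$ with $g(p_i,p_i)=g(q_i,q_i)$ for each $i$. The standard route is via the linear algebra of the \emph{compatible complex structure}, exactly as in Lemma~2.42 of \cite{McDuffSalamonBook1998}. First I would use non-degeneracy to write $\omega(u,v)=g(Au,v)$ for a unique invertible linear map $A\colon V\to V$; skew-symmetry of $\omega$ together with symmetry of $g$ forces $A$ to be $g$-skew-adjoint, i.e. $A^{*}=-A$. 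Then $A^{*}A=-A^{2}$ is $g$-self-adjoint and positive definite, so it has a positive-definite $g$-self-adjoint square root $P:=\sqrt{-A^{2}}$, and one checks $P$ commutes with $A$. Setting $J:=AP^{-1}$ gives a $g$-orthogonal map with $J^{2}=A P^{-1} A P^{-1}=A^{2}P^{-2}=-I$, so $J$ is a complex structure compatible with $g$; moreover $\omega(u,v)=g(Au,v)=g(PJu,v)$.

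\emph{Key steps in order.} (1) Produce $A$, show $A^{*}=-A$. (2) Diagonalize $-A^{2}$ (positive definite, self-adjoint), extract the square root $P$, and note $AP=PA$, $P^{*}=P>0$. (3) Define $J=AP^{-1}$, verify $J^{2}=-I$ and $J^{*}=-J$ (so $J$ is $g$-orthogonal). (4) Build the basis by an inductive/Gram--Schmidt-type argument using $J$: pick a $g$-unit eigenvector considerations are not needed; instead pick any $u_1$, set $q_1:=u_1/\|u_1\|_g$ and $p_1:=Jq_1$; since $J$ is $g$-orthogonal, $g(p_1,p_1)=g(q_1,q_1)=1$ and $g(p_1,q_1)=g(Jq_1,q_1)=0$ because $J$ is skew-adjoint. (5) Let $W:=\operatorname{Span}\{p_1,q_1\}^{\perp_g}$; because $J$ is $g$-orthogonal and $\operatorname{Span}\{p_1,q_1\}$ is $J$-invariant, $W$ is $J$-invariant, and since $\omega(u,v)=g(PJu,v)$ with $P$ commuting with $J$ one checks $W$ is also $\omega$-orthogonal to $\operatorname{Span}\{p_1,q_1\}$ and that $\omega|_W$ is still non-degenerate (it is $g(P'J|_W\cdot,\cdot)$ for the restriction). (6) Induct on $\dim W=\dim V-2$. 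Finally rescale: the basis produced has $g(p_i,p_i)=g(q_i,q_i)$ automatically but $\omega(p_i,q_i)$ may be a positive constant $\lambda_i$ (an eigenvalue of $P$), so divide each pair by $\sqrt{\lambda_i}$ to normalize $\omega(p_i,q_i)=1$; this rescaling preserves $g$-orthogonality and the equality $g(p_i,p_i)=g(q_i,q_i)$.

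\emph{Main obstacle.} The only genuinely delicate point is the simultaneous bookkeeping in the inductive step: one must check that passing to the $g$-orthogonal complement $W$ of a $J$-invariant plane keeps \emph{both} structures in good shape — $W$ is $J$-invariant, $\omega|_W$ is non-degenerate, and the restricted data $(W,\omega|_W,g|_W)$ again satisfies the hypotheses — so that the induction closes. Since $J$ is $g$-orthogonal, $g$-orthogonal complements of $J$-invariant subspaces are $J$-invariant, and non-degeneracy of $\omega|_W$ follows from the fact that $\omega(u,v)=g(PJu,v)$ with $PJ$ invertible and $W$ being $PJ$-invariant (because $P$ and $J$ commute); I would spell this out carefully. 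Everything else — the existence of the square root $P$, the identity $J^2=-I$, the rescaling — is routine spectral theory for self-adjoint operators. Alternatively, one can avoid $J$ entirely and mimic the inductive proof of Theorem~\ref{t.SymplBasis} above, at each stage choosing $q$ to be a $g$-unit eigenvector of the $g$-self-adjoint operator $-A^2$ and $p$ proportional to $Aq$, which makes the $g$-orthogonality and the norm equality fall out of the eigenvector property; I expect the $J$-based argument to be cleanest, but I would mention this variant.
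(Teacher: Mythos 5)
Your primary, $J$-based argument has a genuine gap at step (5). For an arbitrary unit vector $q_1$, the plane $\Pi=\operatorname{Span}\{q_1,Jq_1\}$ is $J$-invariant but in general \emph{not} $P$-invariant, hence not $A$-invariant; and it is $A$-invariance of $\Pi$ (equivalently of its $g$-orthogonal complement $W$, since $A^{*}=-A$) that the claim ``$W$ is $\omega$-orthogonal to $\Pi$'' actually requires, because $\omega(u,w)=g(Au,w)$ vanishes for all $u\in\Pi$, $w\in W$ precisely when $A\Pi\subseteq\Pi$. Commutativity of $P$ and $J$ does not make $W$ invariant under $PJ=A$. Concretely, take $V=\mathbb{R}^{4}$ with the standard inner product, $Je_1=e_2$, $Je_2=-e_1$, $Je_3=e_4$, $Je_4=-e_3$, $P=\operatorname{diag}(1,1,2,2)$, and $\omega(u,v):=g(PJu,v)$; then $A=PJ$ is skew-adjoint, $P=\sqrt{-A^{2}}$, and $J$ is exactly your compatible complex structure, but for $q_1=(e_1+e_3)/\sqrt{2}$, $p_1=Jq_1=(e_2+e_4)/\sqrt{2}$, the vector $w=e_2-e_4$ lies in $W=\Pi^{\perp_g}$ while $\omega(q_1,w)=-1/\sqrt{2}\neq 0$. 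So the induction does not close: you retain $g$-orthogonality but lose $\omega(p_i,q_j)=\omega(p_i,p_j)=\omega(q_i,q_j)=0$ for $i\neq j$, and the output is not a symplectic basis.

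The fix is precisely the ``variant'' you mention in your last sentence, and it is essentially what the paper does: choose $q_1$ to be a $g$-unit eigenvector of the self-adjoint positive operator $-A^{2}$ (equivalently of $P$) and take $p_1$ proportional to $Aq_1=\lambda_1 Jq_1$. Then $\Pi$ is invariant under both $J$ and $P$, hence under $A$, so $W=\Pi^{\perp_g}$ is $A$-invariant and $\omega$-orthogonal to $\Pi$, and $\omega|_W=g(A|_W\cdot,\cdot)$ is non-degenerate; the induction closes, and the final rescaling by $\sqrt{\lambda_i}$ is as you describe. The paper carries this out in one shot rather than inductively, by diagonalizing the Hermitian matrix $iA$ over $\mathbb{C}^{2n}$ and taking real and imaginary parts of the eigenvectors $u_j+iv_j$, which produces all the pairs $(u_j,v_j)$ simultaneously with the required $g$-orthogonality and $|u_j|=|v_j|$. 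In short: promote your parenthetical alternative to the main argument; the $J$-based route as written fails at the orthogonal-complement step.
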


\begin{proof} It is enough to show that for $\mathbb{R}^{2n}$  with the standard inner product there is an orthogonal basis diagonalizing a non-degenerate skew-symmetric form. We define a $2n \times 2n$ matrix $A$ by

\[
\omega\left( u, v \right)=:\langle u, Av \rangle.
\]
Then $A$ is non-degenerate and $A^{T}=-A$, and therefore $iA \in \mathbb{C}^{2n \times  2n}$ is Hermitian. This means that the
spectrum of $A$ is purely imaginary and there is an orthonormal basis of eigenvectors in  $\mathbb{C}^{2n}$ for $A$. That is, there are $\alpha_{j}>0$ and (orthonormal) $u_{j}+iv_{j} \in \mathbb{C}^{2n}$ such that $A\left( u_{j}+iv_{j} \right)=i \alpha_{j}\left( u_{j}+iv_{j} \right)$, and then $A\left( u_{j}-iv_{j} \right)=-i \alpha_{j}\left( u_{j}-iv_{j} \right)$. So $u_{j}-iv_{j}$ is an eigenvector for $-i \alpha_{j}$, and therefore it is orthogonal to the eigenvector $u_{j}+iv_{j}$ since $A$ is skew-symmetric. Thus we have

\begin{align*}
& A\left( u_{j}+iv_{j} \right)=i \alpha_{j}\left( u_{j}+iv_{j} \right), j=1, ..., n,
\\
& \left( u_{j}+iv_{j} \right)^{T}\left( u_{k}+iv_{k} \right)=\delta_{jk},
\\
& \left( u_{j}-iv_{j} \right)^{T}\left( u_{k}+iv_{k} \right)=0.
\end{align*}
By equating real and imaginary parts, we have

\begin{align*}
& A u_{j}=- \alpha_{j}v_{j}, j=1, ..., n,
\\
& A v_{j} = \alpha_{j} u_{j}, j=1, ..., n,
\\
& \left( u_{j}+iv_{j} \right)^{T}\left( u_{k}+iv_{k} \right)=\delta_{jk}.
\end{align*}
Then

\begin{align*}
& A u_{j}=- \alpha_{j}v_{j}, j=1, ..., n,
\\
& A v_{j} = \alpha_{j} u_{j}, j=1, ..., n,
\\
&  u_{j}^{T} u_{k}=u_{j}^{T} v_{k}=v_{j}^{T} v_{k}=0.
\end{align*}
This gives $\omega\left( u_{j}, v_{j} \right)=u_{j}^{T}A v_{j}=\alpha_{j}\vert u_{j}\vert^{2}>0$ and the rest of the identities needed to complete the proof.
\end{proof}
We call a symplectic space  $\left( V, \omega \right)$ \emph{isotropic} if such a symplectic basis is not only orthogonal with respect to the metric $g$, but orthonormal. Otherwise the space in non-isotropic and the lengths of the orthogonal basis are used to parameterize the symplectic form $\omega$ in Equation \eqref{e.SymplForm}.

\begin{acknowledgement}
The  authors would like to thank N.~Eldredge for helpful discussions during the preparation of this work.
\end{acknowledgement}

\providecommand{\bysame}{\leavevmode\hbox to3em{\hrulefill}\thinspace}
\providecommand{\MR}{\relax\ifhmode\unskip\space\fi MR }
\providecommand{\MRhref}[2]{%
  \href{http://www.ams.org/mathscinet-getitem?mr=#1}{#2}
}
\providecommand{\href}[2]{#2}

\end{document}